\newtheorem{theorem}{Theorem}[section]
 \newtheorem{corollary}[theorem]{Corollary}
 \newtheorem{lemma}[theorem]{Lemma}
 \newtheorem{proposition}[theorem]{Proposition}
 \theoremstyle{definition}
 \newtheorem{definition}[theorem]{Definition}
 \theoremstyle{remark}
 \numberwithin{equation}{section}
\def \bC {\mathbb C}
\def \bN {\mathbb N}
\def \bQ {\mathbb Q}
\def \bR {\mathbb R}
\def \bS {\mathbb S}
\def \bT {\mathbb T}
\def \bZ {\mathbb Z}
\def \cB {\mathcal B}
\def \cD {\mathcal D}
\def \cF {\mathcal F}
\def \cH {\mathcal H}
\def \cL {\mathcal L}
\def \cO {\mathcal O}
\def \cS {\mathcal S}
\def \fg {\mathfrak g}
\def \ft {\mathfrak t}
\def \fz {\mathfrak z}
\def\Gh{{\widehat{G}}}
\def \sL{\mathscr L}
\def \RepG {\text{\rm Rep}(G)}
\def \FundG {\text{\rm Fund}(G)}
\def \id {\text{\rm I}}
\def\Op{{{\rm Op}}}
\def\rank{{{\rm rank}}}
\def\L2f{L^2_{\mbox{\tiny finite}}(G)}
\def \RepG {\text{\rm Rep}(G)}
\def \FundG {\text{\rm Fund}(G)}
\def \jac {\text{\rm jac}}
\def \sL{\mathscr L}
\def \tr {\text{\rm tr}}
\def \TR {\text{\rm TR}}
\def \id {\text{\rm I}}
\def\Op{{{\rm Op}}}
\def\supp{{{\rm supp}}}
\def\res{{\text{\rm res}}}
\def \sig {\varsigma}
\def \spec {\text{\rm Spec}}
\begin{document}

\title
{Local and global symbols on compact Lie groups}
\author
{V\'eronique Fischer}
\address
{Department of Mathematical Sciences,
University of Bath,
Claverton Down,
Bath  BA2 7AY, United Kingdom}
\email{v.c.m.fischer@bath.ac.uk}

\keywords{Pseudodifferential operators on manifolds, analysis on the torus and on compact Lie groups, 
non-commutative residue, canonical trace}

\subjclass[2010]{58J40, 43A77, 58J42}

\date{October 2018}

\maketitle

\begin{abstract}
On the torus, it is possible to assign 
a global symbol to a  pseudodifferential operator using Fourier series. 
In this paper we investigate the relations 
 between the local and global symbols 
for the operators in the classical H\"ormander calculus
and describe the principal symbols, the non-commutative residue and the canonical trace of an operator in terms of its global symbol.
We also generalise these results to any compact Lie group.  
\end{abstract}

\tableofcontents

\section{Introduction}

Pseudo-differential operators on a compact manifold are by definition 
the operators which in local coordinates are pseudo-differential,
i.e. belong to some H\"ormander class $\Psi^m$.
Although this definition is local, each operator remains globally defined but with local symbols in coordinate charts.
On the torus, and more generally on a compact Lie group, it is possible to use the Fourier series to define a global symbol for these operators.
The aim of this paper is to characterise the 
classical H\"ormander classes in terms of the global symbols 
and then to
investigate the relations between 
the global symbols and the symbols in any coordinate. 
In particular, we will relate the global symbol with the principal symbol, the non-commutative residue 
and the canonical trace.
The definitions of the non-commutative residue and of the canonical trace together with further references will be given in Section \ref{sec_Rn}, so in this introduction  
we will restrict our comments to their origins and uses.
The non-commutative residue was introduced independently by Guillemin \cite{guillemin} 
and Wodzicki \cite{Wod_84,Wod_87} in the early eighties.
Beside being the only trace on the algebra of pseudo-differential operators on $M$ up to constants, 
its importance comes from its applications in mathematical physics, 
mainly in Connes' non-commutative geometry due to its link with the Dixmier trace \cite{connes}
but also in relation with e.g. the Einstein-Hilbert action
(see \cite[Section 6.1]{lesch_2010} and the references therein).
The canonical trace was constructed by Kontsevich and Vishik in the mid-nineties \cite{KV} as a tool to study further zeta functions and determinants of elliptic pseudo-differential operators. Since then, it 
has received considerable attention and found interesting applications, 
see e.g. \cite{scott,paycha,okikiolu}.

On the torus $\bT^n$, the relations between local and global symbols are  simple using trigonometric Fourier series:
a global symbol $\sigma$ is defined on $\bT^n \times \bZ^n$, 
and, when the corresponding operator is classical,
this global symbol extends naturally to a symbol on $\bT^n\times \bR^n$  which coincides with the local symbol modulo smoothing symbols, see Section \ref{sec_T}.
Therefore the notions of principal symbol and non-commutative residue have obvious meanings locally and globally.
Furthermore, the recent result of the author of this paper on real trace expansions
\cite{myexp} applied to the Laplacian on the torus
yield a description of the non-commutative residue and of the canonical trace as coefficients in certains expansions.
This generalises Pietsch's result \cite[Theorem 11.15]{pietsch}
on  the non-commutative residue.
It also gives an alternative  proof of the description of the canonical trace as the discrete finite part of the symbol already obtained by L\'evy, Jimenez and Paycha in \cite[Section 5]{Paycha+_tams}.

The case of compact Lie groups is more involved but also more 
 natural from the viewpoint of harmonic analysis. Indeed,
the idea of global symbols stems from the study of Fourier multipliers, and  is related to  singular integrals (Caledr\`on-Zygmund theory) and to the genesis of the pseudodifferential theory;
see  \cite{monarxiv}  and its introduction for a brief survey on Fourier multipliers on compact Lie groups in these directions.
The idea of studying pseudodifferential operators on Lie groups as a systematic generalisation of Fourier multipliers can be traced back to Michael Taylor in \cite{taylor_bk86}.
On compact Lie groups, recent works \cite{monJFA,ruzhansky+turunen_bk,
ruzhansky+turunen+wirth} have shown that 
it is possible to characterise the  classes of global symbols corresponding to pseudodifferential operators (see Section \ref{subsec_psiG}).
In this paper, we also show that it is also true for the classical pseudodifferential classes (see Section \ref{subsec_hom_symb}).
Not surprisingly, 
since the global symbol is built out of the representations of the group, this will involve the highest weight theory.

Although this paper is concerned with compact Lie groups, 
it naturally applies to settings such as  spheres of any dimension and more generally compact homogeneous domains
which are quotients of a compact Lie group by a closed subgroup. 

\smallskip

The paper is organised as follows.
In Section \ref{sec_Rn}, 
we review the properties of the classical pseudodifferential calculus on any compact manifold.
In particular, we recall  the notions of principal symbols, non-commutative residue and canonical trace.
 We explore the case of the torus
 in Section \ref{sec_T}, the main result in the toroidal setting being summarised in Section \ref{subsec_psiclT}.
In Section \ref{sec_G}, we examine the case of compact Lie groups: we first  relate the principal symbol of a classical operator with the global symbol in Section \ref{subsec_princ_symbol}.
Subsequently, in Section \ref{subsec_hom_symb}, we  define the notions of global homogeneous symbol  
and global classical symbols, and characterise classical pseudo-differential operators as the pseudo-differential operators with classical symbols. 
We conclude with obtaining the non-commutative residue and the canonical trace as coefficients in certains expansions.

\smallskip

\textbf{Notation}
$\bN_0=\{0,1,2,\ldots\}$ denotes the set of non-negative integers
and 
$\bN=\{1,2,\ldots\}$ the set of positive integers.

If $\cH_1$ and $\cH_2$ are two Hilbert spaces, we denote by $\sL(\cH_1,\cH_2)$ the Banach space of the bounded operators from $\cH_1$ to $\cH_2$. If $\cH_1=\cH_2=\cH$ then we write $\sL(\cH_1,\cH_2)=\sL(\cH)$.
The operator norm is then denoted by $\|\cdot\|_{\sL(\cH)}$, 
where the Hilbert-Schmidt norm is denoted by 
$\|\cdot\|_{HS(\cH)}$.
We may keep the same notation for an operator and its possible extensions to larger spaces when the extensions are unique by continuity. 

We denote by $C(M)$ the space of complex valued continuous functions on a manifold $M$, by 
$C^\infty(M)$ the subspace of smooth functions in $C(M)$
and, by $\cD(M)$ the subspace of compactly supported functions in $C^\infty(M)$. 
We will regularly  use the usual notation 
$\langle \xi \rangle = \sqrt{1+|\xi|^2}$, 
$\partial_j=\partial_{x_j}$ for the partial derivatives in $\bR^n$, 
$\partial^\alpha=\partial_1^{\alpha_1}\partial_2^{\alpha_2}\ldots$ etc.

\section{Preliminaries: the Euclidean case}
\label{sec_Rn} 
 
In this section we recall well-known properties of the classical H\"ormander pseudodifferential calculus on $\bR^n$ and on a manifold.
We also introduce the non-commutative residue and the canonical trace
together with their relations to trace expansions.

\subsection{The Euclidean pseudodifferential calculus
}
\label{subsec_prel_EPDO}

In this section we mainly set the notation and our vocabulary for the pseudodifferential calculus  on an open subset $\Omega$ of $\bR^n$
and on a compact manifold $M$.
Classical references for this material include \cite{shubin_bk,taylor_bk_princeton}.

We denote by $S^m=S^m(\Omega)$ the H\"ormander class of symbols of order $m\in \bR$ on $\Omega$, 
that is, the Fr\'echet space of smooth functions $a:\Omega\times \bR^n\to \bC$ satisfying 
for all multi-indices $\alpha,\beta\in\bN_0^n$
$$
 |\partial_x^\beta \partial_\xi^\alpha a(x,\xi)| \leq C_{\alpha,\beta}
\langle \xi \rangle^{m-\alpha}.
$$ 

We say that the symbol $a\in S^{m}(\Omega)$ is  \emph{compactly supported in $x$} when there exists $R>0$ such that 
$a(x,\xi)=0$ for any $(x,\xi)\in \bR^n\times\bR^n$ with $|x|>M$.

\medskip

To each symbol $a\in S^m(\Omega)$, 
we associate the operator $\Op_\Omega(a)$ defined via
$$
\Op_\Omega(a) f(x) = \int_{\bR^n} \widehat f(\xi) e^{2i\pi x\cdot \xi} a(x,\xi) d\xi, \qquad x\in \Omega, \ f\in \cD(\Omega).
$$
Here, $\widehat f$ denotes the Euclidean Fourier transform of $f\in \cS(\bR^n)$:
$$
 \widehat f(\xi) = \cF_{\bR^n} f(\xi) =\int_{\bR^n} f(x) e^{-2i\pi x\cdot \xi} dx.
 $$
We denote by $\Psi=\Psi^m(\Omega) = \Op_\Omega(S^m(\Omega))$   
the H\"ormander class of operators of order $m\in \bR$ on $\Omega$.
Recall that $\Op_\Omega$ is one-to-one on $S^m(\Omega)$ 
and thus that $\Psi^m(\Omega)$ inherit a structure of Fr\'echet space.   

The class of smoothing symbols is denoted by $S^{-\infty}=S^{-\infty}(\Omega)=\cap_{m\in \bR} S^m(\Omega)$ and the class of smoothing symbols is denoted by $\Psi^{-\infty}(\Omega) =\cap_{m\in \bR} \Psi^m(\Omega)=\Op_\Omega(S^{-\infty}(\Omega))$.
Examples of smoothing operators are convolution operators with Schwartz convolution kernels.

\medskip

If $A\in \cup_{m\in \bR} \Psi^m(\Omega)$ then 
we denote by $K_A \in \cS'(\Omega\times\Omega)$ its integral kernel
so that we have in the sense of distributions:
$$
Af(x)
=\int_{\Omega} K_A(x,y) f(y) dy.
$$
Recall that $a(x,\xi) = \cF_{\bR^n}( K_A(x, x- \cdot))$
and that $K_A$ is smooth away from the diagonal $x=y$. 

The following property is well-known:
\begin{lemma}
\label{lem_tr_Psim<-n}
If  $A=\Op_\Omega(a)\in \Psi^m(\Omega)$  with $m<-n$,
then its integral kernel $K_A$ is continuous.
Assuming furthermore that its symbol $a\in S^{m}(\Omega)$ is  compactly supported in $x$, 
then the operator  $A$  is trace-class with trace:
$$
\tr (A) = \int_{\Omega} K_A(x,x) dx 
 =\int_{\Omega\times\bR^{n}} a(x,\xi) \ dx d\xi.
$$
Consequently, if $\Omega$ is a bounded open subset of $\bR^n$, 
then the linear map $A\mapsto \tr (A)$ is continuous on $\Psi^m(\Omega)$ for any $m<-n$.
\end{lemma}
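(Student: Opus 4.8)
The statement has three parts: (i) continuity of the kernel $K_A$, (ii) the trace formula when the symbol is compactly supported in $x$, and (iii) continuity of $A \mapsto \tr(A)$ on $\Psi^m(\Omega)$ when $\Omega$ is bounded. Let me sketch how I'd handle each.

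For (i): we have the identity $a(x,\xi) = \cF_{\bR^n}(K_A(x, x-\cdot))(\xi)$, equivalently $K_A(x,y) = \int_{\bR^n} a(x,\xi) e^{2i\pi(x-y)\cdot\xi}\,d\xi$, at least formally. The symbol estimate with $\beta=\alpha=0$ gives $|a(x,\xi)| \leq C\langle\xi\rangle^m$, which is integrable in $\xi$ precisely because $m < -n$. So the oscillatory integral is in fact an absolutely convergent integral, and dominated convergence shows $K_A$ is continuous in $(x,y)$.

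For (ii): assuming $a$ is compactly supported in $x$, the kernel $K_A(x,y)$ is compactly supported in $x$ and, by continuity from (i), bounded. Setting $y=x$ gives $K_A(x,x) = \int_{\bR^n} a(x,\xi)\,d\xi$, and integrating in $x$ over $\Omega$ gives the right-hand side $\int_{\Omega\times\bR^n} a(x,\xi)\,dx\,d\xi$. The real content is showing $A$ is trace-class with $\tr(A) = \int_\Omega K_A(x,x)\,dx$; for this I'd factor $A = A_1 A_2$ as a composition of two Hilbert–Schmidt operators (using that $\langle D\rangle^{-s}\langle x\rangle^{-s}$ is Hilbert–Schmidt on $L^2(\bR^n)$ for $s > n/2$, combined with the symbol decay to absorb enough powers of $\langle\xi\rangle$ and the compact $x$-support to absorb powers of $\langle x\rangle$), then invoke the standard fact that a trace-class operator with continuous kernel has trace equal to the integral of the kernel along the diagonal. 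The main obstacle is this trace-class argument: one must be careful that $\Omega$ is merely open, so the operator acts on $\cD(\Omega)$ and one works with its extension, using the compact $x$-support of $a$ to control things at the boundary and at infinity.

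For (iii): when $\Omega$ is bounded, every symbol in $S^m(\Omega)$ is automatically compactly supported in $x$ (up to the harmless issue of behavior near $\partial\Omega$, which doesn't affect the kernel on the diagonal over $\Omega$), so part (ii) applies. The estimate $|a(x,\xi)| \leq C_{0,0}\langle\xi\rangle^m$ where $C_{0,0}$ is one of the Fréchet seminorms of $a$ gives
\[
|\tr(A)| \leq \int_{\Omega\times\bR^n} |a(x,\xi)|\,dx\,d\xi \leq |\Omega|\, C_{0,0} \int_{\bR^n}\langle\xi\rangle^m\,d\xi = C_\Omega\, \|a\|_{S^m},
\]
and since $\Op_\Omega$ is a topological isomorphism onto $\Psi^m(\Omega)$, continuity in $A$ follows. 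I expect parts (i) and (iii) to be routine; the genuine work is the trace-class factorization in part (ii).
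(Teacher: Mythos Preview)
The paper does not prove this lemma: it is introduced with ``The following property is well-known'' and no argument is given. So there is nothing to compare your approach against.

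That said, your sketch is sound. The continuity of $K_A$ via absolute convergence of the $\xi$-integral when $m<-n$, the diagonal identity $K_A(x,x)=\int_{\bR^n}a(x,\xi)\,d\xi$, the Hilbert--Schmidt factorisation to obtain the trace-class property, and the seminorm bound for continuity of $A\mapsto\tr(A)$ are exactly the standard ingredients. One small correction in your part (iii): for bounded $\Omega$, a symbol in $S^m(\Omega)$ is \emph{not} automatically compactly supported in $x\in\Omega$ (it may be nonzero up to $\partial\Omega$), so you cannot literally reduce to part (ii). However this is harmless: the uniform estimate $|a(x,\xi)|\le C_{0,0}\langle\xi\rangle^m$ holds on all of $\Omega$ and $|\Omega|<\infty$, so your integral bound and the Hilbert--Schmidt factorisation (now using $|\Omega|<\infty$ rather than compact $x$-support to control the $x$-integrals) go through unchanged.
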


A symbol $a\in S^m$ with $m\in \bR$ admits a \emph{poly-homogeneous expansion with complex order} $\tilde m$ 
when it admits an expansion $a\sim \sum_{j\in \bN_0} \alpha_{m-j}$ 
where each function  $\alpha_{m-j}(x,\xi)$ is  $(\tilde m-j)$-homogeneous in $\xi$ for $|\xi|\geq 1$; here $\tilde m\in \bC$ with $\Re \tilde m=m$ and the homogeneity for $|\xi|\geq 1$ means that $\alpha_{m-j}(x,\xi) = |\xi|^{\tilde m-j}\alpha_{m-j}(x,\xi/|\xi|)$ for any $(x,\xi)\in \bR^n\times\bR^n$ with $|\xi|\geq 1$. 
We write the poly-homogeneous expansion as $a\sim_h \sum_j a_{\tilde m-j} $ where $a_{\tilde m-j}(x,\xi)=|\xi|^{\tilde m-j}\alpha_{m-j}(x,\xi/|\xi|)\in C^\infty (\Omega \times (\bR^n\backslash\{0\}))$ is homogeneous of degree $\tilde m-j$ in $\xi$.  
We may call $a_{\tilde m}$ the (homogeneous) principal symbol of $a$ or of $A$ and $\tilde m$ the complex order of $a$ or $A$.

If the open set $\Omega$ is bounded,
we say that a symbol in $S^m(\Omega)$ is \emph{classical} with complex order $\tilde m$ when it admits a poly-homogeneous expansion with complex order $\tilde m$.
We denote by $S^{\tilde m}_{cl}$ the space of classical symbols with complex order $\tilde m$ and by $\Psi^{\tilde m}_{cl}=\Op(S^m_{cl})$ the space of classical pseudo-differential  operators with complex order $\tilde m$.

\medskip

If $F:\Omega_1\to \Omega_2$ is (smooth) diffeomorphism between two bounded open sets $\Omega_1,\Omega_2\subset \bR^n$, 
we keep the same notation for the map $F :C_c^\infty(\Omega_1)\to C_c^\infty(\Omega_1)$ given by $F (f) =f\circ F^{-1}$. 
For any $A\in \Psi^m(\Omega_1)$, the operator
$$
F^*A := F A F^{-1}  
$$
is then in $\Psi^m(\Omega_2)$.
This property allows us to define pseudo-differential operators on manifolds in the following way.
Let $M$ be a smooth compact connected manifold of dimension $n$ without boundary.
The space $\Psi^{m}(M)$ of pseudo-differential operators 
of order $m$ on $M$ is the space of operators which are locally transformed by some (and then any) coordinate cover to pseudo-differential operators in $\Psi^{m}(\bR^{n})$;
that is, the operator $A:\cD(M)\to \cD'(M)$ such that there exists a finite open cover $(\Omega_{j})_{j}$ of $M$,
a subordinate partition of unity $(\chi_{j})_{j}$
and diffeomorpshims $F_{j}:\Omega_{j}\to \cO_{j}\subset \bR^{n}$
that transform the operators $\chi_{k }A\chi_{j}:\cD(\Omega_{j})\to \cD'(\Omega_{k})$ into operators in $\Psi^{m}(\bR^n)$.

If $F:\Omega_1\to \Omega_2$ is (smooth) diffeomorphism between two bounded open sets $\Omega_1,\Omega_2\subset \bR^n$
and if $A\in \Psi^{\tilde m}_{cl}(\Omega_1)$, then the operator
$F^*A$
is then in $\Psi^{\tilde m}_{cl}(\Omega_2)$.
This property allows us to define pseudo-differential operators on manifolds in the following way.
The space $\Psi^{\tilde m}_{cl}(M)$ of classical  pseudo-differential operators 
of order $\tilde m$ on $M$ is the space of operators which are locally transformed by some (and then any) coordinate cover to classical pseudo-differential operators.

\subsection{Trace expansions}
\label{subsec_tr_exp}

In this section, we recall trace expansions for pseudo-differential operators.

We start with the trace and kernel expansions due to Seeley, Grubb and Schrohe.
Recall that a complex sector is a subset of $\bC\backslash\{0\}$ of the form 
$\Gamma=\Gamma_I:=\{re^{i\theta} \ :\  r>0,\ \theta\in I\}$
where $I$ is a subset of $[0,2\pi]$;
it is closed (in $\bC\backslash\{0\}$) when $I$ is closed.	

\begin{theorem}{\cite[Theorem 2.7]{Grubb+Seeley}}
\label{thm_GS}
Let $M$ be a compact smooth manifold of dimension $n\geq 2$
or let $\Omega$ be a bounded open subset in $\bR^n$.
Let $\cL\in \Psi_{cl}$ be an invertible elliptic operator of order $m_0\in \bN$. 
We assume that there exists a complex sector $\Gamma$ such that  the homogeneous principal symbol of $\cL$ in local coordinates satisfies  
$$
\ell_{m_0}(x,\xi)\notin -\Gamma^{m_0} =\{-\mu^{m_0} : \mu \in \Gamma\}
\quad \mbox{when}\ |\xi|=1.
$$

Let $A\in \Psi_{cl}^m$ and let $k\in \bN$ such that $-k m_0 +m<-n$.
The kernel $K(x,y,\lambda)$ of 	$A(\cL-\lambda)^{-k}$ is continuous and satisfies on the diagonal
\begin{equation}
\label{eq1_thm_GS}	
K(x,x,\lambda)\sim 
\sum_{j=0}^\infty c_j(x) \lambda^{\frac{n+m-j}{m_0}-k}
+ 
\sum_{l=0}^\infty \left(c'_l(x)\log \lambda +c''_l(x)\right) \lambda^{-l-k},
\end{equation}
for $\lambda \in -\Gamma^{m_0}$,
$|\lambda|\to \infty$, uniformly in closed sub-sectors of $\Gamma$.
The coefficients $c_j(x)$ and $c'_l(x)$ are determined from the symbols 
$a\sim_h\sum_j a_{m-j}$ and $\ell\sim_h \sum_j \ell_{m_0-j}$ in local coordinates, while the coefficients $c''_l(x)$ are in general globally determined.

As a consequence, one has for the trace
\begin{equation}
\label{eq2_thm_GS}	
\tr \left(A(\cL-\lambda)^{-k}\right)
\sim 
\sum_{j=0}^\infty c_j \lambda^{\frac{n+m-j}{m_0}-k}
+ 
\sum_{l=0}^\infty \left(c'_l\log \lambda +c''_l\right) \lambda^{-l-k},
\end{equation}
where the coefficients are the integrals over $M$ of the traces of the coefficients defined in \eqref{eq1_thm_GS}.
\end{theorem}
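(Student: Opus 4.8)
The plan is to realise $(\cL-\lambda)^{-1}$, for $|\lambda|$ large, as a pseudo-differential operator depending on the spectral parameter $\lambda$, and then to reduce the diagonal expansion of the kernel of $A(\cL-\lambda)^{-k}$ to a one-variable (radial) asymptotic analysis of the $\xi$-integral of its symbol. Set $\lambda=-\mu^{m_0}$ with $\mu\in\Gamma$; then $\cL-\lambda$ has parameter-dependent principal symbol $\ell_{m_0}(x,\xi)+\mu^{m_0}$, which is homogeneous of degree $m_0$ in $(\xi,\mu)$ and, by the hypothesis on $\ell_{m_0}$ together with the ellipticity of $\cL$, invertible for all $(\xi,\mu)\neq0$, uniformly as $\mu/|\mu|$ ranges over a closed subsector of $\Gamma$. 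This is the input for a parametrix construction.

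First I would build a parameter-dependent pseudo-differential operator $Q(\lambda)$ of order $-m_0$ with symbol $q(x,\xi,\lambda)\sim\sum_{l\ge0}q_{-m_0-l}(x,\xi,\lambda)$, each $q_{-m_0-l}(x,\xi,\mu)$ smooth for $(\xi,\mu)\neq0$ and homogeneous of degree $-m_0-l$ in $(\xi,\mu)$, starting from $q_{-m_0}=(\ell_{m_0}-\lambda)^{-1}$ and continuing by the usual triangular recursion driven by the full symbol of $\cL$. The symbolic calculus of such parameter-dependent symbols --- composition, asymptotic summation, and remainder bounds uniform in $\mu$ over closed subsectors --- shows that $Q(\lambda)$ inverts $\cL-\lambda$ modulo an operator whose symbol is $O(\langle(\xi,\mu)\rangle^{-N})$ for every $N$, hence negligible for the expansion; so $(\cL-\lambda)^{-1}=Q(\lambda)$ up to such terms. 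Raising to the $k$-th power stays in the calculus (yielding an operator of order $-km_0$ with symbol $\sim\sum_l q^{[k]}_{-km_0-l}$) and composing on the left with $A\in\Psi_{cl}^m$ produces a parameter-dependent operator of order $\nu:=m-km_0<-n$; by Lemma \ref{lem_tr_Psim<-n} its kernel is continuous and on the diagonal $K(x,x,\lambda)=\int_{\bR^n}\sigma(x,\xi,\lambda)\,d\xi$, with $\sigma\sim\sum_{j,l,\alpha}\tfrac1{\alpha!}\partial_\xi^\alpha a_{m-j}(x,\xi)\,D_x^\alpha q^{[k]}_{-km_0-l}(x,\xi,\lambda)$.

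The core is then a model computation for each summand $P(x,\xi,\mu):=\partial_\xi^\alpha a_{m-j}(x,\xi)\,D_x^\alpha q^{[k]}_{-km_0-l}(x,\xi,\mu)$. For $|\xi|\ge1$ it is homogeneous of degree $e=m-j-|\alpha|-km_0-l\le\nu<-n$ in $(\xi,\mu)$, while near $\xi=0$ it is $O(|\mu|^{-km_0-l})$ once the components $a_{m-j}$ have been fixed smoothly there. Splitting $\int_{\bR^n}P\,d\xi=\int_{|\xi|\le1}+\int_{|\xi|\ge1}$ and rescaling $\xi=\mu\eta$ on the outer piece reduces matters to integrating, against $|\eta|^e\,d\eta$, a function behaving like $|\eta|^{m-j-|\alpha|}$ as $\eta\to0$: when $m-j-|\alpha|>-n$ the integral converges and gives a clean power $\mu^{n+e}$, contributing to $c_{j'}\lambda^{(n+m-j')/m_0-k}$ with $j'=j+|\alpha|+l$; when $m-j-|\alpha|=-n$ there is a logarithmic obstruction at $\eta=0$, producing after rescaling terms of size $\mu^{-km_0-l}\log|\mu|$ and $\mu^{-km_0-l}$; and the inner piece $\int_{|\xi|\le1}$ likewise contributes $\mu^{-km_0-l}$ times a constant depending on the chosen extension of $a_{m-j}$ near $0$. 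Collecting these over $j,l,\alpha$ and noting that a power $\mu^{-km_0-l}$ is an integer shift of $\lambda^{-k}$ precisely when $l\in m_0\bN_0$, one obtains the two series of \eqref{eq1_thm_GS}: the powers $\lambda^{(n+m-j)/m_0-k}$ with coefficients $c_j(x)$, and, at the resonant degrees $m-j-|\alpha|=-n$, the terms $c'_l(x)\log\lambda$ and $c''_l(x)$. The coefficients $c_j(x)$ and the log-coefficients $c'_l(x)$ are integrals of the homogeneous components of $a$ and of the $q^{[k]}_{-km_0-l}$, hence --- through the recursion --- depend only on finitely many of the $a_{m-j}$ and $\ell_{m_0-j}$, and they patch to densities on $M$; the constants $c''_l(x)$ additionally involve regularised finite-part integrals and the behaviour of the symbol near $\xi=0$, which are not invariantly defined pointwise, so $c''_l$ is in general only globally determined. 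Uniformity over closed subsectors of $\Gamma$ is inherited from the bounds uniform in $\mu$ throughout.

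Finally, \eqref{eq2_thm_GS} follows by integrating \eqref{eq1_thm_GS} over $M$ through a finite partition of unity, using that $A(\cL-\lambda)^{-k}$ is trace-class with $\tr=\int_M K(x,x,\lambda)\,dx$ (Lemma \ref{lem_tr_Psim<-n}) and that the local coefficients are invariant densities. The main obstacle, as always in this circle of ideas, is setting up the parameter-dependent symbol calculus: one must handle simultaneously two gradings --- homogeneity in $\xi$ alone, inherited from $A$, and joint homogeneity in $(\xi,\mu)$, inherited from the resolvent --- and propagate remainder estimates uniform as $\lambda\to\infty$ in the sector; granted that machinery, the expansion follows from the radial integral above together with the bookkeeping of the resonant terms.
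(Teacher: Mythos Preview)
The paper does not give its own proof of this theorem: it is stated as a preliminary result and attributed verbatim to Grubb and Seeley \cite[Theorem~2.7]{Grubb+Seeley}. So there is no ``paper's proof'' to compare against; the result is simply quoted.

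That said, your outline is essentially the Grubb--Seeley strategy. Their framework of \emph{weakly polyhomogeneous} (weakly parametric) symbols is designed exactly to handle the obstacle you identify at the end --- the coexistence of $\xi$-homogeneity from $A$ and joint $(\xi,\mu)$-homogeneity from the resolvent --- and once that calculus is in place, the expansion follows from the radial analysis and bookkeeping you describe. One small point: your remark that ``$\mu^{-km_0-l}$ is an integer shift of $\lambda^{-k}$ precisely when $l\in m_0\bN_0$'' is not quite how the second series in \eqref{eq1_thm_GS} arises. The integer powers $\lambda^{-l-k}$ and the attached logarithms appear whenever the $\xi$-homogeneity degree of a contributing term equals $-n$ (the resonance you correctly isolate), and the index $l$ in the statement counts those resonances directly rather than through divisibility by $m_0$; the two series can and do overlap in exponents when $m\in\bZ$. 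This is a matter of relabelling rather than a gap, but it is worth getting straight if you intend to track the coefficients explicitly.
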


Integrating the expansion \eqref{eq2_thm_GS} against  $\lambda^z$ or against $e^{-z\lambda}$ on well chosen $z$-contours yields the following expansions for operators $A$ of any order, see also
 \cite[Section 1]{schrohe}:

\begin{theorem}
\label{thm_schrohe}
Let $\cL\in \Psi_{cl}$ be as in Theorem \ref{thm_GS}.

For any $A\in \Psi_{cl}^m$, we have for $t\to 0$:
\begin{equation}
\label{eq_heat}	
\tr \left(Ae^{-t \cL}\right)
\sim 
\sum_{j=0}^\infty \tilde c_j t^{\frac{n+m-j}{m_0}}
+ 
\sum_{l=0}^\infty \left(\tilde c'_l\ln t  +\tilde c''_l\right) t^{l},
\end{equation}
and 
\begin{equation}
\label{eq_power}	
\Gamma(t)\tr \left(A\cL^{-t}\right)
\sim 
\sum_{j=0}^\infty \frac{\tilde c_j}{s+\frac{n+m-j}{m_0}} t^{\frac{n+m-j}{m_0}}
+ 
\sum_{l=0}^\infty \left(\frac{-\tilde c'_l}{(t+l)^2} +
\frac{\tilde c''_l}{t+l}\right) .
\end{equation}
In \eqref{eq_power}, the left had side is meromorphic with poles as indicated by the right hand side.
The coefficients $\tilde c_j$, $\tilde c'_l$ and $\tilde c''_l$ 
are multiples of the corresponding $c_j$, $c'_l$ and $ c''_l$
in  \eqref{eq2_thm_GS}, 
the actors are universal constants independent of $A$ and $\cL$.
\end{theorem}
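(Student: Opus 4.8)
The plan is to derive Theorem \ref{thm_schrohe} from Theorem \ref{thm_GS} by the standard contour-integral machinery that turns a resolvent trace expansion into heat and zeta expansions; the authors even point to \cite[Section 1]{schrohe} for this. First I would fix $k\in\bN$ large enough that $-km_0+m<-n$, so that \eqref{eq2_thm_GS} is available, with the understanding that the final expansions do not depend on the choice of $k$ (changing $k$ only shifts how many terms are absorbed into remainders). The heat operator and the complex power are then recovered from the resolvent by the Cauchy-type formulas
\begin{equation*}
e^{-t\cL} = \frac{1}{2\pi i}\int_{\cC} e^{-t\lambda}(\cL-\lambda)^{-1}\,d\lambda,
\qquad
\cL^{-t} = \frac{1}{2\pi i}\int_{\cC} \lambda^{-t}(\cL-\lambda)^{-1}\,d\lambda,
\end{equation*}
where $\cC$ is a contour in $-\Gamma^{m_0}$ (a keyhole contour around the ray of growth of the resolvent, chosen so that $\cL$ has no spectrum on or inside it after the invertibility/ellipticity hypotheses from Theorem \ref{thm_GS}). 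To get a $k$-th power of the resolvent one differentiates $k-1$ times in $\lambda$ and integrates by parts, replacing $e^{-t\lambda}$ by a multiple of $t^{k-1}e^{-t\lambda}$ and $\lambda^{-t}$ by a multiple of $\frac{\Gamma(t+k)}{\Gamma(t)}\lambda^{-t-k}$; this is where the universal constants relating $\tilde c_j$ to $c_j$ appear and where the $\Gamma(t)$ prefactor in \eqref{eq_power} comes from.

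Next I would insert the asymptotic expansion \eqref{eq2_thm_GS} for $\tr(A(\cL-\lambda)^{-k})$ into these contour integrals and integrate term by term, controlling the remainder uniformly in closed subsectors as in the hypotheses of Theorem \ref{thm_GS}. The point is that each model term $\lambda^{a}$ or $\lambda^{-l-k}\log\lambda$ produces, after integration against $e^{-t\lambda}$ over $\cC$, a term of the form $t^{-a-1}$ (up to a $\Gamma$-factor) respectively $t^{l+k-1}(\text{const}+\text{const}\cdot\log t)$; matching exponents, $\lambda^{\frac{n+m-j}{m_0}-k}$ gives $t^{\,k-1-(\frac{n+m-j}{m_0}-k)-1}$ which after the $t^{k-1}$ factor from the integration by parts becomes $t^{\frac{n+m-j}{m_0}}$, exactly as in \eqref{eq_heat}, and similarly the $\log\lambda$ terms give the $\tilde c'_l\ln t$ contributions while the pure $\lambda^{-l-k}$ terms give the $\tilde c''_l t^l$ contributions. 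For the complex power, integrating $\lambda^{-t}$ against $\lambda^{a}$ over the keyhole contour produces a factor $\frac{1}{a+1-t}$ type simple pole — more precisely the meromorphic continuation of $\int_\cC \lambda^{-t+a}\frac{d\lambda}{\lambda}$ — yielding the first sum in \eqref{eq_power}, while $\lambda^{-l-k}\log\lambda$ produces a \emph{double} pole (differentiate $\frac{1}{z}$ in $z$), which is the source of the $(t+l)^{-2}$ terms, and $\lambda^{-l-k}$ produces the simple poles $(t+l)^{-1}$. One then checks that the left-hand side of \eqref{eq_power} is meromorphic with exactly these poles, the holomorphic remainder coming from the uniformly controlled tail of the resolvent expansion.

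The main obstacle, as usual in this circle of ideas, is not the formal term-by-term integration but the \emph{analytic justification}: one must (i) verify that the contour $\cC$ can be chosen inside $-\Gamma^{m_0}$ with the spectrum of $\cL$ safely off it, using the ellipticity and the spectral-gap consequence of the sector hypothesis on $\ell_{m_0}$; (ii) establish resolvent-norm bounds $\|A(\cL-\lambda)^{-k}\|_{\tr}\lesssim |\lambda|^{(n+m)/m_0 - k}$ with enough decay so that the contour integrals converge absolutely and can be shifted/deformed, and so that the remainder terms in \eqref{eq2_thm_GS} integrate to genuine $O(\cdot)$ remainders uniformly in the relevant parameter; and (iii) handle the behaviour near the apex of the keyhole (the small circle around $0$), which contributes the ``low-energy'' part and must be shown to be smooth in $t$ and to contribute only to the regular part in $t$ of the zeta function. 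I would treat (ii) by the standard parametrix construction for $(\cL-\lambda)^{-1}$ with $\lambda$ as an auxiliary parameter of weight $m_0$ (Seeley's calculus with parameter), which is exactly the tool underlying Theorem \ref{thm_GS} and is quoted there; granting that, the passage to \eqref{eq_heat} and \eqref{eq_power} is a routine, if careful, Mellin/Laplace transform argument, and the universality of the constants relating $\tilde c_j,\tilde c'_l,\tilde c''_l$ to $c_j,c'_l,c''_l$ follows because they are precisely these contour integrals of the model monomials, which involve only $\Gamma$-function values and numerical factors and not $A$ or $\cL$.
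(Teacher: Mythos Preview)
Your proposal is correct and follows exactly the approach indicated in the paper: the paper does not give a detailed proof of Theorem~\ref{thm_schrohe} but simply states, in the sentence preceding it, that ``Integrating the expansion \eqref{eq2_thm_GS} against $\lambda^z$ or against $e^{-z\lambda}$ on well chosen $z$-contours yields the following expansions for operators $A$ of any order, see also \cite[Section 1]{schrohe}.'' Your sketch is a faithful and rather complete unpacking of precisely this contour-integration argument, with more analytic detail than the paper itself provides.
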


Recently, the author of this paper showed the following real trace expansions:
\begin{theorem}[\cite{myexp}]
\label{thm_myexp}
Let $\cL \in \Psi_{cl}^{m_0}$ be an elliptic self-adjoint operator  
on  a compact manifold $M$ of dimension $n\geq 2$.
Its order is $m_0>0$.
Let $A\in \Psi^m_{cl}$ and $\eta\in C_c^\infty(\bR)$.
Then the operator $A\ \eta(t \cL)$ is traceclass for all $t\in \bR$.
\begin{enumerate}
\item 
If $\eta$ is supported in $(0,\infty)$, then the trace  of $A\ \eta(t \cL)$ admits the following expansion as $t\to 0^+$, 
$$
\tr \left(A \eta(t\cL)\right)
\sim 
c_{m+n}
t^{-\frac{m+n}{m_0}} 
+
c_{m-n-1}
t^{-\frac{m+n-1}{m_0}} 
+
\ldots 
$$	
in the sense that 
$$
\tr \left(A \eta(t\cL)\right)
-\sum_{j=0}^{N-1}
c_{m+n-j}
t^{-\frac{m+n-j}{m_0}} 
=O(t^{-\frac{m+n-N}{m_0}} ).
$$	

\item 	
If  $\eta\equiv 1$ on a neighbourhood of $(\spec (\cL))\cap (-\infty,0]$
and if $\Re m\geq -n$ with $m \notin \bZ$,
then we have as $t\to 0^+$
 $$
 \tr (A\eta (t\cL)) = c'(A)+ \sum_{j=0}^{N-1} c'_{m+n-j} t^{\frac{-m-n+j}{m_0}}  +o(1),
 $$
where  $N\in \bN$ is  such that $\Re m+n<N$.
\item
In Part (1) , the constant $c_{m+n-j}$  are of the form 
$$
c_{m+n-j} = c_{m+n-j}^{(\sigma)} c_{m+n-j} ^{(\eta)}
$$
where $c_{m+n-j}^{(A)}$ depends only on the poly-homogeneous expansion of the symbol of $A$ in local coordinates and 
$$
\tilde c_{m+n-j}^{(\eta)}:=
\frac1{m_0}
\int_{u=0}^{+\infty}
\eta(u) \ u^{\frac{m-j+n}{m_0}} \frac{du}u ,
$$	
and similarly for Part (2).
\end{enumerate}
\end{theorem}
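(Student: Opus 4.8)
The plan is to realise $\eta(t\cL)$ through the functional calculus in a form to which the trace expansions of Section~\ref{subsec_tr_exp} apply, and then to identify the coefficients as residues of a meromorphic function. First I would settle the trace-class claim and reduce to the positive part of the spectrum: since $\eta\in C_c^\infty(\bR)$ and $\cL$ is elliptic of positive order $m_0$ on the compact manifold $M$, only finitely many eigenvalues $\lambda$ of $\cL$ satisfy $t\lambda\in\supp\eta$ (Weyl's law), so $\eta(t\cL)$ has finite rank and $A\eta(t\cL)$ is trace-class for every $t>0$. Under the hypothesis of Part~(2) the set $\spec(\cL)\cap(-\infty,0]$ must be finite (otherwise no compactly supported $\eta$ could be $\equiv1$ on a neighbourhood of it), so I may split $\cL=\cL_+\oplus\cL_{\le0}$ along the spectral projections $P_+,P_{\le0}$ onto the positive and the non-positive part of the spectrum; here $P_{\le0}$ has finite rank, $AP_{\le0}$ is trace-class, and for $t$ small $\tr(A\eta(t\cL)P_{\le0})=\eta(0)\tr(AP_{\le0})$ because $t\lambda\to0$ and $\eta\equiv1$ near $0$. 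In the setting of Part~(1) one has $\eta(t\cL)P_{\le0}=0$ for $t$ small. In both cases it remains to analyse $\tr\bigl(A\eta(t\cL_+)\bigr)$.

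On the positive spectral subspace I would use the Mellin transform. With $M\eta(z)=\int_0^\infty\eta(u)u^{z-1}\,du$ one has $\eta(ts)=\frac1{2\pi i}\int_{\Re z=c}M\eta(z)(ts)^{-z}\,dz$ for $s,t>0$, hence
\begin{equation*}
\tr\bigl(A\eta(t\cL_+)\bigr)=\frac1{2\pi i}\int_{\Re z=c}M\eta(z)\,t^{-z}\,\zeta_A(z)\,dz,\qquad \zeta_A(z):=\tr\bigl(A\cL_+^{-z}\bigr),
\end{equation*}
valid for $c>(\Re m+n)/m_0$, the exchange of trace and integral being justified by absolute convergence in trace norm. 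I then need two facts. First, by Theorems~\ref{thm_GS} and~\ref{thm_schrohe} (Seeley--Grubb), $\zeta_A$ extends meromorphically to $\bC$, holomorphic except for at most simple poles at the points $z_j=(m+n-j)/m_0$, $j\in\bN_0$, and at the non-positive integers; the residue at $z_j$ is a constant $c^{(A)}_{m+n-j}$ obtained by integrating over $M$ the $(m-j)$-homogeneous term of the local symbol of $A$ (hence depending only on the poly-homogeneous expansion of $A$), and $\res(A)$, $\TR(A)$ are encoded by $\zeta_A$ near $z=0$. Second, $\zeta_A$ grows at most polynomially on vertical strips avoiding its poles. On the $\eta$ side, repeated integration by parts shows that $M\eta$ decays faster than any polynomial on vertical strips and is holomorphic off $z=0$, with a simple pole at $z=0$ of residue $\eta(0)$ in the setting of Part~(2) (where $\eta\equiv1$ near $0$), while in Part~(1), where $\supp\eta\subset(0,\infty)$, $M\eta$ is entire.

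Next I would shift the contour from $\Re z=c$ to $\Re z=c'<c$; this is legitimate because the super-polynomial decay of $M\eta$ dominates the polynomial growth of $\zeta_A$, the shifted integral is $O(t^{-c'})$, and the difference is the sum of the residues of $M\eta(z)t^{-z}\zeta_A(z)$ at the poles crossed. In Part~(1), taking $\Re z_N<c'<\Re z_{N-1}$, these are $z_0,\dots,z_{N-1}$; the residue at $z_j$ equals $M\eta(z_j)\,t^{-z_j}\,c^{(A)}_{m+n-j}$, and since $M\eta(z_j)=\int_0^\infty\eta(u)u^{(m+n-j)/m_0}\frac{du}u=m_0\,\tilde c^{(\eta)}_{m+n-j}$ and $t^{-z_j}=t^{-(m+n-j)/m_0}$, this is exactly $c_{m+n-j}t^{-(m+n-j)/m_0}$ with the product structure of Part~(3), the remaining integral contributing the claimed $O(t^{-(m+n-N)/m_0})$. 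In Part~(2) one additionally crosses $z=0$; since $m\notin\bZ$ the point $0$ is not among the $z_j$ and $\res(A)=0$, so $\zeta_A$ is holomorphic at $0$ with $\zeta_A(0)=\TR(AP_+)$, the pole of the integrand at $0$ is simple, and its residue $\eta(0)\TR(AP_+)$ together with the $\cL_{\le0}$-contribution $\eta(0)\tr(AP_{\le0})$ gives the constant $c'(A)=\eta(0)\TR(A)=\TR(A)$, independent of $\eta$; the residues at $z_0,\dots,z_{N-1}$ give the terms $c'_{m+n-j}t^{(-m-n+j)/m_0}$, and with $\Re z_N<c'<0$ the remaining integral is $o(1)$. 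The hypotheses are used precisely here: $\Re m\ge -n$ makes the bump-type terms $t^{-(m+n-j)/m_0}$ the divergent ones, and $m\notin\bZ$ prevents $z=0$ (and, in the relevant range, any non-positive integer) from being a double pole of the integrand, which is what would create a $\log t$ term.

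The hard part is the polynomial bound for $\zeta_A(z)=\tr(A\cL_+^{-z})$ on vertical strips, the one genuinely analytic ingredient, which has to be extracted from the uniformity in $\lambda$ of the resolvent trace expansion behind Theorem~\ref{thm_GS}. The other delicate point is the bookkeeping around $z=0$ in Part~(2): splitting off the finite-rank piece, invoking additivity of the canonical trace under trace-class perturbations to identify $\TR(AP_+)+\tr(AP_{\le0})$ with $\TR(A)$, and verifying the absence of $\log t$ terms; the finite-rank reduction, the Mellin representation, the contour shift and the matching of residues with the $c^{(A)}_{m+n-j}$ are then routine.
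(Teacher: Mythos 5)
The statement you are asked to prove is cited in the paper from the author's external preprint \cite{myexp} (``Real trace expansion'') and is \emph{not} proved in the present paper -- it is only stated in Section~\ref{subsec_tr_exp} and subsequently used (e.g.\ in the proof of Corollary~\ref{cor_TR_thm:PsiclT}). So there is no in-paper proof to compare against; I can only assess your attempt on its own merits and against the likely content of~\cite{myexp}.

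Your Mellin route has the right formal skeleton: the finite-rank reduction to $\cL_+$, the representation $\tr(A\eta(t\cL_+))=\frac{1}{2\pi i}\int_{\Re z=c} M\eta(z)\,t^{-z}\,\zeta_A(z)\,dz$, the matching of residues of $\zeta_A(z)=\tr(A\cL_+^{-z})$ with the Grubb--Seeley coefficients, and the identification $M\eta(z_j)=m_0\,\tilde c^{(\eta)}_{m+n-j}$ all fit together and reproduce the product structure of Part~(3). But the step you wave through as ``to be extracted from the uniformity of the resolvent expansion'' -- polynomial growth of $\zeta_A$ on vertical strips to the left of the abscissa of absolute convergence -- is not a bookkeeping issue; it is the place the argument actually breaks. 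Writing $\Gamma(s)\zeta_A(s)=\int_0^\infty t^{s-1}\tr(Ae^{-t\cL_+})\,dt$, subtracting the $t\to 0^+$ asymptotics with a smooth cutoff gives a Mellin transform of a $C^\infty$ (but not analytic) function of $\ln t$, hence a remainder that decays \emph{super-polynomially but not exponentially} on vertical lines; dividing by $\Gamma(s)$, which by Stirling decays like $e^{-\pi|\Im s|/2}$, leaves you with an a priori bound on $\zeta_A$ that is $O\bigl(|\Im s|^{1/2-\sigma}e^{\pi|\Im s|/2}\bigr)$. Meanwhile $M\eta$ (the Mellin transform of a $C_c^\infty$, non-analytic $\eta$) also decays only super-polynomially, so it does not beat this. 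As written, the contour shift and the $O(t^{-(m+n-N)/m_0})$ and $o(1)$ remainder estimates are therefore not justified. To repair this within the Mellin framework you would need a genuinely new ingredient -- e.g.\ exploiting the holomorphic extension of $t\mapsto\tr(Ae^{-t\cL_+})$ to the sector $|\arg t|<\pi/2$ and rotating the integration ray (a Paley--Wiener argument) to recover the missing exponential decay, or a Phragm\'en--Lindel\"of convexity argument once some a priori sub-exponential bound is established. The title ``Real trace expansion'' and the content of Theorem~\ref{thm_GS} strongly suggest that~\cite{myexp} instead proceeds in the ``real'' direction: a Helffer--Sj\"ostrand almost-analytic-extension formula $\eta(t\cL)=-\frac{1}{\pi}\int_\bC \partial_{\bar z}\tilde\eta_t(z)\,(\cL-z)^{-k}q_k(z)\,dA(z)$ fed directly into the uniform resolvent trace expansion \eqref{eq1_thm_GS}--\eqref{eq2_thm_GS}, which sidesteps the vertical-strip growth problem altogether. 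So even with the gap filled, your route would likely be genuinely different from the cited proof.
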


It is not known whether the constants in Theorems \ref{thm_myexp} and \ref{thm_schrohe} are related, 
except when they  are given 
by the non-commutative residue and the canonical trace,
see Propositions \ref{prop_res_in_exp} and \ref{prop_TR_in_exp}
respectively.

\subsection{The non-commutative residue}
\label{subsec_Wod}

Here, we recall the definition of the non-commutative residue via local symbols.
The original references are \cite{guillemin} and \cite{Wod_84,Wod_87}.
See also \cite{Grubb+Schrohe,schrohe,scott,Fed++Schrohe}.

Let $\Omega$ be a bounded open subset in $\bR^n$ with $n\geq 2$.
Let $A\in \Psi^m_{cl} (\Omega)$ with symbols $a\sim_h \sum_{j\in \bN_0} a_{m_j}$.
If $m\in \bZ_n$, we set
$$
\res_x(A) := \int_{\bS^{n-1}} a_{-n}(x,\xi) \ d\sig(\xi);
$$ 
in this paper, $\bZ_n$ denotes the set 
$$
\bZ_n:=\{-n,-n+1,-n+2,\ldots\},
$$
and 
$\sig$ denotes the surface measure on the Euclidean unit sphere $\bS^{n-1} \subset \bR^n$ which may be obtained as the restriction to $\bS^{n-1}$ of the $(n-1)$-form $\sig$ defined on $\bR^n$ by
$\sum_{j=1}^n (-1)^{j+1} \xi_j 
\ d\xi_1 \wedge \ldots \wedge d\xi_{j-1} \wedge d\xi_{j+1} \wedge \ldots \wedge d\xi_n$.
If $m\in \bC\backslash \bZ_n$, then we set $\res_x(A):=0$.

If $F:\Omega_1\to \Omega_2$ is (smooth) diffeomorphism between two bounded open sets $\Omega_1,\Omega_2\subset \bR^n$
and if $A\in \Psi^{m}_{cl}(\Omega_1)$, 
then
$$
|F'(x)|\res_{F(x)} (F^* A) =  \res_{x}(A).
$$ 
Hence $\res_x$ is a 1-density and makes sense on a compact manifold $M$.

\begin{definition}
\label{def_residue_density}
The function $x\mapsto \res_x A$ is the \emph{residue density} 
on a bounded open subset $\Omega\subset\bR^n$ or on an $n$-dimensional compact manifold $M$ with $n\geq 2$.
The corresponding integral 
$$
\res(A):=\int_M \res_x(A)
\quad\mbox{or}\quad
\res(A):=\int_\Omega \res_x(A) \ dx, 
$$
is called  the \emph{non-commutative residue} of $A$.
\end{definition}

The non-commutative residue is a trace on $\cup_{m\in \bC} \Psi^m_{cl}$
in the sense that it is a linear functional on $\cup_{m\in \bC} \Psi^m_{cl}$
which vanishes on commutators.
If $M$ is connected, then any other trace on $\cup_{m\in \bC} \Psi^m_{cl}$ is a multiple of $\res$.

The non-commutative residue also appears in the constant coefficients of the trace expansions recalled in Section \ref{subsec_tr_exp}:

\begin{proposition}
\label{prop_res_in_exp}
\begin{enumerate}
\item We continue with the setting and results of Theorem \ref{thm_GS}.
The coefficients $c'_0(x)$ and $c'_0$ in \eqref{eq1_thm_GS} and \eqref{eq2_thm_GS} satisfy
$$
c'_0 (x) =  \frac {(-1)^k}{(2\pi)^{n}m_0}\res_x (A) 
\qquad \mbox{and}\qquad
c'_0 =  \frac {(-1)^k}{(2\pi)^{n}m_0}\res (A).
$$
\item 
With the setting and results of Theorem
\ref{thm_schrohe}, we have
$$
\tilde c'_0 =  \frac {-\res (A)}{(2\pi)^{n}m_0}.
$$
\item 
With the setting and results of Theorem
\ref{thm_myexp}, we have when $m \in \bZ_n$
$$
c_0=\frac 1 {m_0} \res(A) \int_0^{+\infty} \!\!\! \eta(u)\ \frac{du}u .
$$  
\end{enumerate}
\end{proposition}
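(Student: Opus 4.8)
\textbf{Proof plan for Proposition \ref{prop_res_in_exp}.}

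The plan is to trace the non-commutative residue through each of the three trace expansions, using the fact that the $\log$-type coefficient in a resolvent or heat expansion is always (a universal constant times) an integral of the degree $-n$ homogeneous term of the relevant symbol over the cosphere bundle. The common computational heart is the following Euclidean statement: if $b \in S_{cl}^{\tilde\mu}(\Omega)$ has poly-homogeneous expansion $b\sim_h \sum_j b_{\tilde\mu-j}$ and is compactly supported in $x$, then the trace-defining integral $\int_{\Omega\times\bR^n} b(x,\xi)\,dx\,d\xi$ (when it makes sense by Lemma \ref{lem_tr_Psim<-n}, i.e. $\Re\tilde\mu < -n$), and more importantly its meromorphic continuation in $\tilde\mu$, has a simple pole exactly when some $\tilde\mu - j = -n$, with residue $\int_\Omega \int_{\bS^{n-1}} b_{-n}(x,\xi)\,d\sigma(\xi)\,dx$ up to a factor of $(2\pi)^{-n}$ coming from the normalisation of the Fourier transform in $\Op_\Omega$. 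This is the standard Wodzicki computation; I would isolate it as the one genuine calculation and then feed it into each part.

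For Part (1), I would apply Theorem \ref{thm_GS} to $A(\cL-\lambda)^{-k}$: its symbol in local coordinates is built from $a\sim_h\sum_j a_{m-j}$ and the parametrix symbol of $(\cL-\lambda)^{-k}$, whose leading term is $(\ell_{m_0}(x,\xi)-\lambda)^{-k}$. The $\log\lambda$ coefficient $c'_0(x)$ on the diagonal of the kernel arises from the unique homogeneous component of the full symbol of $A(\cL-\lambda)^{-k}$ that is homogeneous of degree $-n$ in $(\xi,\lambda^{1/m_0})$ jointly; extracting it and integrating over $|\xi|=1$ in the $\xi$-variables, after performing the radial and $\lambda$-contour bookkeeping, produces precisely $\tfrac{(-1)^k}{(2\pi)^n m_0}\int_{\bS^{n-1}}a_{-n}(x,\xi)\,d\sigma(\xi) = \tfrac{(-1)^k}{(2\pi)^n m_0}\res_x(A)$; the $(-1)^k$ is the sign from differentiating/expanding $(\ell_{m_0}-\lambda)^{-k}$ and the $1/m_0$ from the change of variables $\mu = \lambda^{1/m_0}$. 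Integrating over $M$ and invoking the definition of $\res(A)$ gives the statement for $c'_0$. (One must also note $\res_x(A)=0$ and $c'_0(x)=0$ consistently when $m\notin\bZ_n$, which is automatic since then no homogeneous component has integer degree $-n$.)

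For Parts (2) and (3) I would not repeat the symbol calculus but simply push Part (1) through the integral transforms that produce Theorems \ref{thm_schrohe} and \ref{thm_myexp}. For Part (2): the heat and power expansions are obtained from \eqref{eq2_thm_GS} by integrating $\tr(A(\cL-\lambda)^{-k})$ against $e^{-t\lambda}$, respectively $\lambda^{-s}$, over a suitable contour; under such an integral a term $c'_l\log\lambda\cdot\lambda^{-l-k}$ contributes a term whose $\log t$ (resp. double-pole) coefficient $\tilde c'_l$ is $c'_l$ times a universal constant depending only on $k,l$ and the contour, and for $l=0$ one computes that constant to be $-1$, giving $\tilde c'_0 = -c'_0\big|_{k} \cdot (\text{const})= \tfrac{-1}{(2\pi)^n m_0}\res(A)$; since by Theorem \ref{thm_schrohe} the $\tilde c'_l$ are independent of the auxiliary $k$, this is well-defined. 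For Part (3): by Theorem \ref{thm_myexp}(3), in the case $m\in\bZ_n$ the constant $c_0$ factors as $c_0^{(\sigma)}c_0^{(\eta)}$ with $c_0^{(\eta)} = \tfrac1{m_0}\int_0^\infty \eta(u)\,\tfrac{du}{u}$ (the $j$ with $m+n-j=0$), and the purely symbolic factor $c_0^{(\sigma)}$ is again the integral over $M$ of $\int_{\bS^{n-1}}a_{-n}(x,\xi)\,d\sigma(\xi)$ up to the same normalisation — this is exactly the content that identifies it as $\res(A)$; one then cites the consistency of the author's normalisation in \cite{myexp} (or re-derives the symbolic factor via the same degree $-n$ extraction as in Part (1)) to fix the constant as stated.

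The main obstacle is bookkeeping the universal normalising constants consistently across the three theorems — the powers of $2\pi$ from the Fourier convention in $\Op_\Omega$, the $1/m_0$ from the substitution $\mu=\lambda^{1/m_0}$, the $(-1)^k$ from the $k$-th power of the resolvent, and the contour-integral constants relating Theorem \ref{thm_GS} to Theorems \ref{thm_schrohe} and \ref{thm_myexp} — and checking that the residue-density $\res_x(A)$ really is globally well-defined (i.e. transforms as a $1$-density), which is precisely the transformation law recalled just before Definition \ref{def_residue_density}. The symbol calculus itself is entirely standard once the degree $-n$ extraction lemma is in place; I expect the write-up to be short, essentially a reduction to that lemma plus careful constant-tracking, with the actual heavy lifting deferred to the cited results of Grubb--Seeley, Schrohe, and \cite{myexp}.
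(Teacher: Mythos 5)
Your proposal is correct in substance, but the paper does not prove this proposition at all: its entire ``proof'' is the two-sentence citation ``Parts (1) and (2) can be found in \cite[Section 1]{schrohe}, while Part (3) is part of the results in \cite{myexp}.'' Your sketch is essentially an unpacking of what those references contain --- the degree $-n$ extraction lemma, the joint $(\xi,\lambda^{1/m_0})$-homogeneity bookkeeping behind the $(-1)^k/(2\pi)^n m_0$ constant, the contour-integral transforms connecting resolvent to heat/zeta expansions, and the factorisation $c_0 = c_0^{(\sigma)} c_0^{(\eta)}$ from \cite{myexp} --- so you correctly anticipate the mechanism, and your closing remark that the write-up should defer the heavy lifting to Grubb--Seeley, Schrohe and \cite{myexp} is exactly the paper's own (more laconic) choice. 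The only thing I'd flag as worth double-checking in a full write-up is the $(2\pi)^n$ bookkeeping: the paper's quantisation $\Op_\Omega$ uses the $e^{2i\pi x\cdot\xi}$ convention with no $(2\pi)^{-n}$ prefactor (cf.\ Lemma \ref{lem_tr_Psim<-n}), whereas the $(2\pi)^n$ in the statement of Proposition \ref{prop_res_in_exp} reflects the $e^{ix\cdot\xi}$ convention of \cite{Grubb+Seeley,schrohe}, so one of the two must be silently converted when lifting their formulae.
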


Parts (1) and (2) can be found in \cite[Section 1]{schrohe}, 
while Part (3) is part of the results in \cite{myexp}.

\subsection{The canonical trace}
\label{subsec_def_TR}

In this section,
 we recall the definition of the canonical trace.
References include the original paper  \cite{KV} by Kontsevich and Vishik, as well as \cite{Grubb+Schrohe,paycha}.

 Let $\Omega$ be an open bounded subset of $\bR^n$
and let $A=\Op(a)\in \Psi_{cl}^m(\Omega)$ with complex order $m\notin \bZ_n$.
The symbol of $A$ admits the poly-homogeneous expansion $a\sim_h \sum_{j\in \bN_0} a_{m-j}$.
For $x$ fixed, the function $a_{m-j}(x,\cdot)$ is  smooth  on $\bR^n\backslash\{0\}$ and  $(m-j)$-homogeneous with $m-j\notin \bZ_n$,
so \cite[Theorem 3.2.3]{hormander1} it extends uniquely into a tempered $(m-j)$-homogeneous distributions on $\bR^n$  for which we keep the same notation.
For each $x\in \Omega$, we define the tempered distributions
using the inverse Fourier transform 
$$
\kappa_{a,x}=\cF^{-1} \left\{a(x,\cdot)\right\}
\quad\mbox{and}\quad
\kappa_{a_{m-j},x}=\cF^{-1} \left\{a_{m-j}(x,\cdot)\right\}, 
\ j=0,1,2,\ldots
$$
The distribution $\kappa_{a_{m-j},x}$ is $(-n-m+j)$-homogeneous.
Then for any positive integer $N$ with $m-N<-n$ and $x\in \Omega$
the distribution $\kappa_{a,x} - \sum_{j=0}^{N}	\kappa_{a_{m-j},x}$
is a continuous function on $\bR^n$.
Furthermore, the function $(x,y)\mapsto \kappa_{a,x}(y) - \sum_{j=0}^{N}	\kappa_{a_{m-j},x}(y)$
is continuous and bounded on $\Omega\times \bR^n$.
Its restriction to $y=0$ is independent of $N>m+n$ and defines the quantity
$$
\TR_x(A) := \kappa_{a,x}(0) - \sum_{j=0}^{N}	\kappa_{a_{m-j},x}(0).
$$

If $F:\Omega_1\to \Omega_2$ is diffeomorphism between two bounded open sets $\Omega_1,\Omega_2\subset \bR^n$
and if $A\in \Psi^{\tilde m}_{cl}(\Omega_1)$, 
then
$$
|F'(x)|\TR_{F(x)} (F^* A) =  \TR_{x}(A).
$$ 
Hence, $\TR_x$ is a 1-density and makes sense on a compact manifold $M$.

\begin{definition}[\cite{KV}]
The density 	$x\mapsto\TR_x(A)$ on a compact manifold $M$ or a bounded open subset $\Omega$ is called the \emph{canonical trace density} of $A$.
The corresponding integral 
$$
\TR(A)=\int_M \TR_x(A)
\qquad\mbox{or}\qquad
\TR(A)=\int_\Omega \TR_x(A) dx,
$$
is called the \emph{canonical trace} of $A$.
\end{definition}

On $\Omega$ or $M$, the map $A\mapsto \TR(A)$ is a linear functional  on 
$\Psi^m_{cl}$ for each $m\in \bC\backslash \bZ_n$
and it coincides with the usual $L^2$-trace if $\Re m<-n$.
It is a trace type functional on 
$\cup_{m\in \backslash \bZ_n}\Psi^m_{cl}$ 
in the sense that 
$$
\TR(cA+dB) = c\TR(A) +d\TR(B) 
\qquad \mbox{whenever}\quad c,d\in \bC, \
 A,B\in 
 \cup_{m\in \backslash \bZ_n}\Psi^m_{cl},
  $$
 and 
 $$
 \TR(AB)=\TR(BA) 	
	 \qquad \mbox{whenever}\quad
	 AB,\ BA \in \cup_{m\in \backslash \bZ_n}\Psi^m_{cl}.
 $$

The canonical trace was originally defined  in \cite{KV}, and may be defined on  a slightly larger domain \cite{grubb}.
It is related with coefficients in the trace expansions recalled in Section \ref{subsec_tr_exp}:

\begin{proposition}
\label{prop_TR_in_exp}
\begin{enumerate}
\item We continue with the setting and results of Theorem \ref{thm_schrohe}. 
The coefficients  $c''_0$ in  \eqref{eq2_thm_GS} is equal to the canonical trace $\TR(A)$ of $A$ when $m\notin\bZ_n$.
\item 
With the setting and results of Theorem
\ref{thm_myexp},  when $m\notin\bZ_n$, 
we have $c'(A)=\TR(A)$.
\end{enumerate}
\end{proposition}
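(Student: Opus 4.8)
The plan is to identify the constant-term coefficients $c''_0$ (in Theorem~\ref{thm_schrohe}) and $c'(A)$ (in Theorem~\ref{thm_myexp}) with the canonical trace by reducing to the case of low-order operators where everything is given by honest integral kernels, and then invoking the trace-type / analytic-continuation characterisation of $\TR$. First I would recall that both $\TR$ and the two coefficients are defined first locally (as densities determined by the poly-homogeneous expansion of the symbol in a coordinate chart, up to the ``globally determined'' ambiguity carried by $c''_l$), then integrated over $M$. Since $\TR$ is the \emph{unique} linear functional on $\cup_{m\notin\bZ_n}\Psi^m_{cl}$ that is tracial on its domain and agrees with the $L^2$-trace when $\Re m<-n$ (the characterisation recalled just above the statement, essentially from \cite{KV,Grubb+Schrohe}), it suffices to check that $A\mapsto c''_0(A)$ and $A\mapsto c'(A)$ enjoy these same two properties.

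For the base case $\Re m<-n$: here $A$ is trace-class with $\tr A=\int_{\Omega\times\bR^n} a(x,\xi)\,dxd\xi$ (Lemma~\ref{lem_tr_Psim<-n}), and simultaneously there are no poles in the relevant expansions, so the constant term is literally $\tr A$. Concretely, in Theorem~\ref{thm_schrohe} one integrates \eqref{eq2_thm_GS} (with $k$ large) against $\lambda^z$ or $e^{-z\lambda}$ along the contour, and when $\Re m<-n$ there is no logarithmic term at level $l=0$, so that the term $c''_0$ that survives in the resolvent/power/heat expansion equals $\tr(A\cL^{0})=\tr A$ up to the universal normalising constant, which one pins down by testing on a single explicit example (e.g. a smoothing operator, or $A$ a power of $(1+\cL)$ composed with a sufficiently negative power). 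The same holds for $c'(A)$ in Theorem~\ref{thm_myexp}(2): taking $\eta\equiv1$ near $\spec(\cL)\cap(-\infty,0]$ and $\Re m<-n$, the operator $A\eta(t\cL)\to A$ strongly in trace norm as $t\to0$, so $\tr(A\eta(t\cL))\to\tr A$ and hence $c'(A)=\tr A$.

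Next I would establish traciality: if $AB$ and $BA$ both lie in $\cup_{m\notin\bZ_n}\Psi^m_{cl}$, then $c''_0(AB)=c''_0(BA)$, and similarly for $c'$. The cleanest route is the analytic-family argument of Kontsevich–Vishik: embed $A$ in a holomorphic family $A_z=A\cL^{-z}$ (with $\cL$ as in the hypotheses, after a harmless lower-order shift to ensure invertibility and the sector condition), so that $A_z\in\Psi^{m-zm_0}_{cl}$ and $\Re(m-zm_0)<-n$ for $\Re z\gg0$. On that half-plane all quantities equal the $L^2$-trace, which is tracial; then $z\mapsto c''_0(A_z)$ (resp.\ $z\mapsto c'(A_z)$) extends meromorphically — this meromorphy is exactly what Theorems~\ref{thm_GS}, \ref{thm_schrohe}, \ref{thm_myexp} provide, the poles sitting where $m-zm_0\in\bZ_n$ — and the traciality identity, being an identity of meromorphic functions, persists to $z=0$ provided $m\notin\bZ_n$ so that $z=0$ is not a pole. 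Uniqueness of the tracial extension then forces $c''_0=\TR$ and $c'(A)=\TR(A)$ on the nose (the normalising constant having already been fixed to $1$ in the base case).

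The main obstacle is the bookkeeping around the ``globally determined'' constants $c''_l$ and $c'(A)$: unlike $\res$ and the $c_j,c'_l$, these are \emph{not} given by a local density formula, so one cannot simply match symbol expansions chart by chart. This is precisely why the proof must go through the abstract uniqueness/traciality characterisation of $\TR$ rather than a direct local computation. A secondary technical point is justifying the meromorphic continuation of $c''_0(A_z)$ and $c'(A_z)$ jointly in $z$ with control of where the poles are — one must check that the constructions in the cited trace-expansion theorems are compatible with the holomorphic family $A\cL^{-z}$ (uniformity in $z$ on vertical strips), and that at $z=0$, under $m\notin\bZ_n$, no logarithmic term collides with the constant term. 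Once these are in hand, matching the universal constant via one explicit example (say $A=(1+\cL)^{-s}$ with $\Re(sm_0)>n$, or directly $\cL=-\Delta$ on a torus where the symbol side is completely explicit) completes the identification.
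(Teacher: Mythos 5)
The paper does not give its own proof of this proposition: immediately after the statement it simply cites \cite[Section~1]{Grubb+Schrohe} for Part~(1) and \cite{myexp} for Part~(2). So there is no ``paper's proof'' to compare against line by line; what you have written is an independent reconstruction, and it should be judged on that basis.

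Your high-level strategy --- verify the base case $\Re m<-n$ directly, embed $A$ in the holomorphic family $A_z=A\cL^{-z}$, and carry the identity $c''_0(A_z)=\TR(A_z)$ from $\Re z\gg0$ to $z=0$ by meromorphic continuation --- is exactly the Kontsevich--Vishik mechanism and is the right idea. Two things, however, keep this from being a complete proof. First, the ``uniqueness of the tracial extension'' you lean on in the middle paragraph is not stated in the paper, is not a consequence of what the paper does recall about $\TR$, and is not needed: once you have agreement of two meromorphic functions of $z$ on a right half-plane, unique continuation does the rest, so the appeal to uniqueness of $\TR$ as a tracial functional should be dropped (it would otherwise require importing a genuinely separate theorem). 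Second, and more seriously, you relegate to a ``secondary technical point'' the claim that $z\mapsto c''_0(A_z)$ (resp.\ $z\mapsto c'(A_z)$) is meromorphic with poles precisely on $m-zm_0\in\bZ_n$. This is in fact the entire content of the proposition. Unlike $c_j$ and $c'_l$, the coefficient $c''_0$ is explicitly flagged by Theorem~\ref{thm_GS} as \emph{globally determined}: it is not given by a local density integral of the symbol, so it cannot be tracked in the family $A_z$ by a chart-by-chart symbol computation. Establishing its holomorphy in $z$ where no power-term exponent $\frac{n+m-zm_0-j}{m_0}$ collides with $0,-1,-2,\ldots$, and its controlled (at worst simple) pole structure at collisions, is precisely the resolvent/zeta analysis carried out in \cite{Grubb+Schrohe} (and, for Part~(2), in \cite{myexp}). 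Without reproducing that analysis, the argument has a hole exactly where the cited references do their work. The base-case computation ($\tilde c''_0=\tr A$ when $\Re m<-n$, hence $c''_0=\tr A$ after pinning the universal factor on one example) and the final continuation step are fine as written.
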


Part (1) can be found in \cite[Section 1]{Grubb+Schrohe} 
while Part (2) is part of the results in \cite{myexp}.

 The residue of the canonical trace of a (suitable) holomorphic family of classical pseudo-differential operators is equal to the non-commutative residue.
It can also be read off the zeta function of $A$ 
and in the $\Omega$-setting is related to the finite-part integral of the symbol of $A$ on $\Omega\times \bR^n$, see also \cite{lesch} and \cite{myexp}.

\section{The case of the torus}
\label{sec_T}

In this section, 
we discuss the relations between the pseudo-differential  calculi
defined on the torus viewed as a compact manifold  and defined via the Fourier series.

In this paper,  the $n$-dimensional torus is denoted by  $\bT^n$
and  is realised as $\bT^n=\bR^n/\bZ^n$.

\subsection{The toroidal pseudo-differential  calculus}
\label{subsec_psiT}

In this section we set the notation for and define the toroidal pseudo-differential  calculus.
We refer to \cite{ruzhansky+turunen_bk} for an in-depth presentation.

A toroidal symbol is a scalar function $\sigma$ defined on $\bT^n\times \bZ^n$.
The operator $\Op_{\bT^n} (\sigma)$ 
 associated with the toroidal symbol $\sigma$ is the operator defined via
 $$
 \Op_{\bT^n} (\sigma)f(x) 
 = \sum_{\ell\in \bZ^n}  e^{2i\pi x\cdot\ell} \sigma(x,\ell)\widehat f(\ell), 
 \qquad x\in \bT^n.
 $$
In this formula, $f$ is in  the space $L^2_{finite}(\bT^n)$ of smooth functions on $\bT^n$ whose Fourier coefficients 
$$
\widehat f(\ell) =\cF_{\bT^n} f(\ell)=\int_{\bT^n} f(x) e^{-2i\pi x\cdot\ell}dx,
\quad \ell\in \bZ^n,
$$
all vanish except for a finite number of them.
We will keep the same notation for the operator $\Op_{\bT^n}(\sigma)$ 
and its natural extensions between topological vector spaces containing $L^2_{finite}(\bT^n)$ as a dense subspace.

For each $j=1,\ldots, n$, we denote by $\Delta_j$ the difference operator in the $j$th direction, that is, the operator acting on toroidal symbols $\sigma$ in the following way:
$$
\Delta_j \sigma(x,\ell) = \sigma(x,\ell +e_j)-\sigma(x,\ell), 
\qquad (x,\ell)\in \bT^n\times \bZ^n,
$$
where $(e_1,\ldots,e_n)$ is the canonical basis of $\bR^n$.
The difference operator for the multi-index $\alpha=(\alpha_1,\ldots,\alpha_n)\in \bN_0^n$ is denoted by
$$
\Delta^\alpha := \Delta_1^{\alpha_1} \ldots \Delta_n^{\alpha_n},
$$
with the convention that $\Delta_j^0=\id$.

The following statements say that the pseudo-differential  operators defined locally on the manifold $M=\bT^n$  
have a global description as toroidal operators of the form $\Op_{\bT^n} (\sigma)$: 
\begin{theorem}[\cite{ruzhansky+turunen_bk}]
\label{thm:PsiT}
Let $A\in \Psi^m(\bT^n)$ for some $m\in \bR$.
Then there exists a unique toroidal symbol $\sigma_A$ such that $A=\Op_{\bT^n}(\sigma_A)$.
It satisfies:
\begin{equation}
\label{eq:SmT}	
\forall\alpha,\beta\in \bN_0^n\qquad
\exists C>0 \qquad
\forall (x,\ell)\in \bT^n\times \bZ^n\qquad
|\partial_x^\beta\Delta^\alpha \sigma_A(x,\ell)|
\leq C \langle \ell\rangle^{m-|\alpha|}.
\end{equation}
Conversely, if a toroidal symbol $\sigma$ satisfies \eqref{eq:SmT}, then $\Op_{\bT^n}(\sigma)\in \Psi^m(\bT^n)$. 
\end{theorem}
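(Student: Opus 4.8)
The plan is to compare the toroidal calculus with the Euclidean one through the covering map $q\colon \bR^n\to\bT^n=\bR^n/\bZ^n$, converting between the discrete frequency $\ell\in\bZ^n$ and the continuous frequency $\xi\in\bR^n$ by a smooth sampling/interpolation of symbols together with Poisson summation. I first record the elementary dictionary: if $a\in S^m(\bR^n\times\bR^n)$ is $1$-periodic in $x$, then $\Op_{\bR^n}(a)$ maps $1$-periodic functions to $1$-periodic functions and, restricted to them, equals $\Op_{\bT^n}(\sigma)$ with $\sigma(x,\ell):=a(x,\ell)$, because the Euclidean Fourier transform of a $1$-periodic function is the measure $\sum_{\ell\in\bZ^n}\widehat f(\ell)\delta_\ell$, so the defining integral collapses to the defining sum. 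Such a periodic $\Op_{\bR^n}(a)$ descends to an element of $\Psi^m(\bT^n)$ by the usual pseudolocality argument, being Euclidean of order $m$ in the charts provided by local inverses of $q$; and the restriction $\sigma=a|_{\bZ^n}$ automatically satisfies \eqref{eq:SmT}, since each difference $\Delta_j$ is an integral of $\partial_{\xi_j}$ over a unit segment, whence $|\partial_x^\beta\Delta^\alpha\sigma(x,\ell)|\le C\langle\ell\rangle^{m-|\alpha|}$.

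For the implication ``$A\in\Psi^m(\bT^n)\Rightarrow A=\Op_{\bT^n}(\sigma_A)$ with \eqref{eq:SmT}'', uniqueness and the formula come from testing on the characters $e_\ell(x)=e^{2i\pi x\cdot\ell}$, whose only non-zero Fourier coefficient equals $1$: this forces $\Op_{\bT^n}(\sigma)e_\ell=\sigma(\cdot,\ell)e_\ell$, hence $\sigma_A(x,\ell)=e^{-2i\pi x\cdot\ell}(Ae_\ell)(x)$, and expanding $f\in L^2_{finite}(\bT^n)$ in its finite Fourier series gives $A=\Op_{\bT^n}(\sigma_A)$. To obtain \eqref{eq:SmT} I would lift the kernel $K_A\in\cD'(\bT^n\times\bT^n)$ to a $\bZ^n\times\bZ^n$-periodic distribution $\widetilde K_A$ on $\bR^n\times\bR^n$, fix $\chi\in\cD((-\tfrac12,\tfrac12)^n)$ equal to $1$ near $0$, and set $a(x,\xi):=\cF_{\bR^n}\big(z\mapsto\chi(z)\,\widetilde K_A(x,x-z)\big)(\xi)$. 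This $a$ is $1$-periodic in $x$, and since $A$ is Euclidean of order $m$ in the charts given by $q$, the distribution $z\mapsto\widetilde K_A(x,x-z)$ equals near $z=0$ the inverse Fourier transform of a local symbol in $S^m$ (uniformly in $x$ by compactness of $\bT^n$) and is smooth elsewhere; hence $a\in S^m(\bR^n\times\bR^n)$, the smooth remainder contributing to $S^{-\infty}$. A direct computation identifies the operator on $\bT^n$ with kernel $\chi(x-y)\widetilde K_A(x,y)$ with $\Op_{\bT^n}(a|_{\bZ^n})$, while $\big(1-\chi(x-y)\big)K_A(x,y)$ is a smooth kernel, hence that of some $R\in\Psi^{-\infty}(\bT^n)$; thus $\sigma_A=a|_{\bZ^n}+\sigma_R$, and both summands satisfy \eqref{eq:SmT}, the first by the dictionary, the second because $\sigma_R$ is rapidly decreasing.

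For the converse, given a toroidal $\sigma$ with \eqref{eq:SmT} I would smoothly interpolate it to a Euclidean symbol. Fix $\psi\in\cD((-1,1)^n)$ with $\psi\equiv1$ near $0$ and $\sum_{k\in\bZ^n}\psi(z-k)\equiv1$; then $\psi(k)=\delta_{k,0}$ for $k\in\bZ^n$ (from the support and $\psi\equiv1$ near $0$) and $\widehat\psi(k)=\delta_{k,0}$ for $k\in\bZ^n$ (Poisson summation from $\sum_k\psi(z-k)\equiv1$). Put $b(x,\xi):=\sum_{\ell\in\bZ^n}\sigma(x,\ell)\,\widehat\psi(\xi-\ell)$, which converges since $\widehat\psi\in\cS(\bR^n)$ while $\sigma$ grows polynomially; it is $1$-periodic in $x$ and $b(x,\ell)=\sum_k\sigma(x,k)\widehat\psi(\ell-k)=\sigma(x,\ell)$. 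The crux is $b\in S^m(\bR^n\times\bR^n)$: naive term-by-term differentiation only yields order $m$, so I would integrate by parts discretely, replacing $\sigma(x,\ell)$ in $\sum_\ell\sigma(x,\ell)\,\partial_\xi^\alpha\widehat\psi(\xi-\ell)$ by telescoping sums of differences $\Delta^\gamma\sigma(x,\cdot)$ with $|\gamma|=|\alpha|$, each of size $O(\langle\xi\rangle^{m-|\alpha|})$ by \eqref{eq:SmT}; the moment identities $\sum_k P(k)\widehat\psi(\xi-k)=P(\xi)$ for polynomials $P$ that make this possible hold because $\psi$ and all its derivatives vanish at every non-zero lattice point (the Strang--Fix condition of infinite order, a consequence of $\supp\psi\subset(-1,1)^n$). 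Once $b\in S^m$, the dictionary gives $\Op_{\bT^n}(\sigma)=\Op_{\bR^n}(b)$ on periodic functions modulo a smoothing operator (coming from $(1-\psi)$ times the associated kernel), so $\Op_{\bT^n}(\sigma)\in\Psi^m(\bT^n)$.

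I expect the main obstacle to be exactly the verification that the interpolated symbol $b$ (and, symmetrically, $a$ above) lies in $S^m$ with the correct gain $\langle\xi\rangle^{m-|\alpha|}$ under $\xi$-derivatives: this is where the discrete estimates \eqref{eq:SmT} must be fed into a careful discrete summation-by-parts exploiting the moment and partition-of-unity properties of $\psi$, while keeping track of the $|\alpha|$-fold nested difference terms. The other ingredients — the character test, the Poisson-summation identifications, the smoothness of $K_A$ off the diagonal, and the passage between $\bT^n$ and $\bR^n$ via $q$ — are comparatively routine, the only delicate point being the diagonal singularity of $K_A$.
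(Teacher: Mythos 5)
The paper states Theorem \ref{thm:PsiT} without proof, citing Ruzhansky--Turunen \cite{ruzhansky+turunen_bk}; your blind proposal reconstructs the same standard argument, and its architecture is correct throughout. The character test giving $\sigma_A(x,\ell)=e^{-2i\pi x\cdot\ell}(Ae_\ell)(x)$ and uniqueness is right. The dictionary between periodic Euclidean symbols and toroidal symbols --- the Fourier transform of a $\bZ^n$-periodic function is a measure on $\bZ^n$, so $\Op_{\bR^n}(a)$ on periodic $f$ equals $\Op_{\bT^n}(a|_{\bT^n\times\bZ^n})f$ --- is correct, as is the bound $|\Delta^\alpha a(x,\ell)|\le C\langle\ell\rangle^{m-|\alpha|}$ from $\Delta_j=\int_0^1\partial_{\xi_j}(\cdot+te_j)\,dt$. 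In the forward direction, truncating the lifted kernel by $\chi(x-y)$ near the diagonal, Fourier-transforming in $z=x-y$ to get a periodic $a\in S^m$, and discarding the smooth off-diagonal remainder, is exactly the right decomposition; the identification of $a|_{\bZ^n}$ as the toroidal symbol of the truncated operator works because $\chi\kappa_{A,x}$ lives in a fundamental domain. For the converse, the interpolant $b(x,\xi)=\sum_\ell\sigma(x,\ell)\widehat\psi(\xi-\ell)$ with $\psi\in\cD((-1,1)^n)$, $\psi\equiv1$ near $0$, $\sum_k\psi(\cdot-k)\equiv1$, together with $\widehat\psi|_{\bZ^n}=\delta_0$ (Poisson) and the vanishing of $\partial^\beta\psi$ on $\bZ^n\setminus\{0\}$ (support), is the right construction; such a $\psi$ does exist, e.g.\ $\psi=\mathbf 1_{[-\frac12,\frac12]^n}*\psi_1$ with $\psi_1\in\cD$, $\psi_1\ge0$, $\int\psi_1=1$, $\supp\psi_1$ near $0$.

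The one place where the sketch leaves a real gap is precisely the step you flag: verifying $b\in S^m(\bR^n\times\bR^n)$. Two points in your ``discrete integration by parts'' deserve to be written out. First, \eqref{eq:SmT} controls $\Delta^\gamma\sigma(x,k)$ by $\langle k\rangle^{m-|\alpha|}$ at the lattice points $k$ appearing in the telescoping, not directly by $\langle\xi\rangle^{m-|\alpha|}$; you must split the $\ell$-sum into $|\ell-\xi|\lesssim\langle\xi\rangle$, where the two weights are comparable, and the tail, where the Schwartz decay of $\partial_\xi^\alpha\widehat\psi$ absorbs the polynomial growth of the telescoped terms. Second, the moment identities $\sum_k P(k)\widehat\psi(\xi-k)=P(\xi)$ (which you derive correctly) license subtracting, under $\sum_\ell(\cdot)\,\partial_\xi^\alpha\widehat\psi(\xi-\ell)$, any polynomial in $\ell$ of degree $\le|\alpha|-1$ --- a natural choice being the discrete Taylor (Newton forward-difference) polynomial of $\sigma(x,\cdot)$ at $\ell_0=\lfloor\xi\rfloor$; the discrete remainder is then a combinatorial sum of $\Delta^\gamma\sigma(x,\cdot)$ with $|\gamma|=|\alpha|$ whose number of terms grows polynomially in $|\ell-\ell_0|$, and that combinatorial factor has to be carried through the estimate and killed by the Schwartz decay. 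The $x$-derivatives and the mixed estimates then follow by repeating the same argument with $\partial_x^\beta\sigma$. Once these details are supplied --- and they occupy a nontrivial stretch of \cite{ruzhansky+turunen_bk} --- your proof closes, and it is essentially theirs.
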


We denote by $S^m(\bT^n)$ the Fr\'echet space of toroidal symbols satisfying 
\eqref{eq:SmT} and we say then that the symbols are of order $m$.
There should be no confusion with the notation $S^m(\Omega)$ since there $\Omega$ is an open subset of $\bR^n$.
 Theorem \ref{thm:PsiT}  may then be rephrased as 
$$
\Psi^m(\bT^n)=\Op_{\bT^n}(S^m(\bT^n)).
$$
Furthermore, the proof of Theorem \ref{thm:PsiT} shows that 
the map $\sigma \mapsto \Op_{\bT^n}(\sigma)$ 
is an isomorphism from $S^m(\bT^n)$ to $\Psi^m(\bT^n)$. 

We denote by $S^{-\infty}(\bT^n)=\cap_{m\in \bR} S^m(\bT^n)$ the set of smoothing toroidal symbols.
As a consequence of the results mentioned above, the map  $\sigma \mapsto \Op_{\bT^n}(\sigma)$
is an isomorphism of Fr\'echet spaces from $S^{-\infty}(\bT^n)$ to $\Psi^{-\infty}(\bT^n)$.

Naturally, 
another way of defining globally an operator $A\in \cup_{m\in \bR} \Psi^m(\bT^n)$
is via its integral kernel $K_A \in \cD'(\bT^n\times\bT^n)$
or equivalently via the distribution given by
$k_{A,x}(y)=K_A(x,x - z)$.
Indeed, we have (in the sense of distributions)
for any $f\in C^\infty(\bT^n)$ and $x\in \bT^n$:
$$
Af(x)
=\int_{\bT^n} K_A(x,y) f(y) dy
=\int_{\bT^n}  f(y) \kappa_{A,x}(x-y) dy
=f*\kappa_{A,x}(x).
$$
Recall that the map
$x\mapsto \kappa_{A,x}\in \cD'(\bT^n)$ is smooth  on $\bT^n$
and that we have 
$$
\forall (x,\ell)\in \bT^n\times \bZ^n\qquad
\widehat \kappa_{A,x}(\ell)=\sigma_A (x,\ell)
\qquad\mbox{where} \ 
\sigma_A:=\Op_{\bT^n}^{-1}(A).
$$
Moreover, $\kappa_{A,x}$ is smooth away from the origin for $x$ fixed
since $K_A$ is smooth away from the diagonal $x=y$.
In fact, an operator $A:\cD(\bT^n)\to \cD'(\bT^n)$ is in $\Psi^{-\infty}(\bT^n)$
if and only if $(x,y)\mapsto \kappa_x(y)$ is smooth on $\bT^n\times\bT^n$.

If $\sigma_A$ does not depend on $x$, then $A$ is a Fourier multiplier with symbol $\sigma$ and convolution kernel $\kappa_A$. Even when $\sigma_A$ depends on $x$, we may abuse the vocabulary and call $\kappa_{A,x}$ the convolution kernel of $A$.

More generally, we have the following property between pseudo-differential and translations:
\begin{equation}
\label{eq:sigmatranslT}
\tau_{x_0}^{-1}\Op_{\bT^n}(\sigma)  \tau_{x_0} = \Op_{\bT^n}(\tau_{x_0}\sigma) 
\end{equation}
where $\tau_{x_0}$ denotes the translation
\begin{equation}
\label{eq_def_taux_0}	
\tau_{x_0} f(x) = f(x-x_0), \qquad x_0\in \bT^n, \ f\in L^2(\bT^n)
\end{equation}
and we abused the notation to have $\tau_{x_0}\sigma:(x,\ell)\mapsto \sigma(x-x_0,\ell)$.

As on $\bR^n$, 
we say that the toroidal symbol $\sigma\in S^m(\bT^n)$ admits an \emph{expansion} and we write $\sigma\sim \sum_{j\in \bN_0} \sigma_{m-j}$ when $\sigma_{m-j}\in S^{m-j}(\bT^n)$ and 
$\sigma-\sum_{j=0}^{N} \sigma_{m_j} \in S^{m-N-1}(\bT^n)$.

\subsection{Main results for the classical toroidal pseudo-differential  calculus}
\label{subsec_psiclT}

As in the Euclidean setting, 
we say that a toroidal symbol $\sigma$ in $S^m(\bT^n)$ is \emph{classical with complex order} $\tilde m\in \bC$ when it admits a poly-homogeneous expansion
 $\sigma\sim_h \sum_{j\in \bN_0} \sigma_{\tilde m-j}$, 
 that is,
when each toroidal symbol  $\sigma_{\tilde m-j}(x,\ell)$ is  $(\tilde m-j)$-homogeneous in $\ell\not=0$, i.e. $\sigma_{\tilde m-j}(x,r\ell) = r^{\tilde m-j}\sigma_{\tilde m -j}(x,\ell)$ for any $(x,\ell)\in \bT^n\times(\bZ^n\backslash\{0\})$, $r\in \bN$.
We denote by $S^{\tilde m}_{cl}(\bT^n)$ the space of classical symbols of order $\tilde m\in \bC$.

The next statement  says that the pseudodifferential operators corresponding to classical toroidal (global) symbols are exactly the classical ones on the manifold $M=\bT^n$:

\begin{theorem}
\label{thm:PsiclT}
For any $m\in \bC$, we have
$$\Psi^m_{cl}(\bT^n)=\Op_{\bT^n}(S^m_{cl}(\bT^n)).$$
\end{theorem}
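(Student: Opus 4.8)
The plan is to pass through the Euclidean--toroidal dictionary already used for Theorem~\ref{thm:PsiT} and to establish the two inclusions separately. I identify $\bT^n=\bR^n/\bZ^n$ with the unit cube, so that the restriction of the identity to a small ball $\Omega\subset(0,1)^n$, together with its translates, forms a coordinate cover. By (the proof of) Theorem~\ref{thm:PsiT} and the translation invariance of the H\"ormander classes, an operator $A\in\Psi^m(\bT^n)$ is described globally by its toroidal symbol $\sigma_A\in S^m(\bT^n)$ and locally, in such charts, by a Euclidean full symbol $a\in S^m$, the two sharing the same asymptotic expansion on $\bT^n\times\bZ^n$ (cf.\ \cite{ruzhansky+turunen_bk}). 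Since being classical is a local and diffeomorphism-invariant property (Section~\ref{subsec_prel_EPDO}), $A\in\Psi^m_{cl}(\bT^n)$ if and only if one, equivalently every, such $a$ is classical. Thus the statement reduces to: a toroidal symbol $\sigma\in S^m(\bT^n)$ is classical of complex order $\tilde m$ if and only if it has the same asymptotic expansion on $\bT^n\times\bZ^n$ as a classical Euclidean symbol of complex order $\tilde m$ which is periodic in $x$.

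The inclusion $\Psi^m_{cl}(\bT^n)\subseteq\Op_{\bT^n}(S^m_{cl}(\bT^n))$ is routine. Let $a\sim_h\sum_j a_{\tilde m-j}$ be a classical local full symbol of $A$, and put $\sigma_{\tilde m-j}(x,\ell):=a_{\tilde m-j}(x,\ell)$ for $\ell\in\bZ^n\setminus\{0\}$ and $\sigma_{\tilde m-j}(x,0):=0$. Each $a_{\tilde m-j}(x,\cdot)$ is smooth and $(\tilde m-j)$-homogeneous on $\bR^n\setminus\{0\}$, hence $\sigma_{\tilde m-j}$ is $(\tilde m-j)$-homogeneous under $\bN$; moreover on $|\xi|\geq1$ it obeys the symbol bounds of order $m-j$, and the mean value theorem applied on the segments $[\ell,\ell+e_j]$ --- which avoid $0$ once $|\ell|\geq2$, the finitely many remaining lattice points being harmless --- converts these into the difference estimates $|\partial_x^\beta\Delta^\alpha\sigma_{\tilde m-j}(x,\ell)|\leq C_{\alpha,\beta}\langle\ell\rangle^{m-j-|\alpha|}$, exactly as in the proof of Theorem~\ref{thm:PsiT}. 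Applying the same reasoning to the remainders $a-\sum_{j<N}a_{\tilde m-j}$ of order $m-N$ yields $\sigma_A-\sum_{j<N}\sigma_{\tilde m-j}\in S^{m-N}(\bT^n)$, whence $\sigma_A\sim_h\sum_j\sigma_{\tilde m-j}$ and $\sigma_A\in S^{\tilde m}_{cl}(\bT^n)$.

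For the reverse inclusion $\Op_{\bT^n}(S^m_{cl}(\bT^n))\subseteq\Psi^m_{cl}(\bT^n)$, let $\sigma\sim_h\sum_j\sigma_{\tilde m-j}$ be a classical toroidal symbol. The crux is an \emph{angular extension lemma}: a toroidal symbol that is $(\tilde m-j)$-homogeneous under $\bN$ and lies in $S^{m-j}(\bT^n)$ is the restriction to $\bT^n\times\bZ^n$ of some $a_{\tilde m-j}\in C^\infty(\bT^n\times(\bR^n\setminus\{0\}))$, periodic in $x$ and genuinely $(\tilde m-j)$-homogeneous. I would prove this by starting from any Euclidean extension $b$ of $\sigma_{\tilde m-j}$ (the extension procedure underlying Theorem~\ref{thm:PsiT}, \cite{ruzhansky+turunen_bk}) and homogenising: the difference estimates on $\sigma_{\tilde m-j}$, telescoped along rays, force $k^{-(\tilde m-j)}b(x,k\xi)$ to converge as $\bN\ni k\to\infty$, locally uniformly in $\xi\neq0$ together with all $x$- and $\xi$-derivatives; the limit is smooth and $\bN$-homogeneous, then (by a scaling and continuity argument exploiting the density of rational directions) $\bR_{>0}$-homogeneous, and it agrees with $\sigma_{\tilde m-j}$ on the lattice because the latter is already homogeneous there. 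Granting the lemma, asymptotically (Borel-)sum the symbols $\chi\,a_{\tilde m-j}$, with $\chi$ a cutoff vanishing near $0$, into a classical Euclidean symbol $\tilde\sigma$ of complex order $\tilde m$, periodic in $x$, with $\tilde\sigma|_{\bT^n\times\bZ^n}-\sigma\in S^{-\infty}(\bT^n)$. A direct Fourier-series computation shows that on periodic functions $\Op(\tilde\sigma)$ acts as $\Op_{\bT^n}(\tilde\sigma|_{\bT^n\times\bZ^n})$, so $\Op_{\bT^n}(\sigma)$ equals $\Op(\tilde\sigma)$, modulo a smoothing operator; reading this in the coordinate charts exhibits $\Op_{\bT^n}(\sigma)$ as a classical pseudodifferential operator of complex order $\tilde m$ on $\bT^n$, since smoothing operators belong to every $\Psi^\bullet_{cl}$.

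The only genuinely non-formal ingredient is the angular extension lemma: everything else is the standard dictionary between the toroidal and Euclidean quantisations together with the mean value theorem. I expect that lemma to be the main obstacle --- it is where the arithmetic of the lattice enters, through the density of rational directions and the way iterated differences along $\bZ^n$ control angular regularity on the sphere --- and I would isolate it as a separate statement before the proof of the theorem.
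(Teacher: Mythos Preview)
Your proposal is correct and follows essentially the same strategy as the paper. The paper also derives Theorem~\ref{thm:PsiclT} from a stronger technical result (Proposition~\ref{prop_thm:PsiclT}) that matches the poly-homogeneous expansions of the toroidal and local Euclidean symbols, and isolates exactly the ``angular extension lemma'' you identify as a separate statement (Proposition~\ref{prop_ext_sigma_hom}, reduced to Lemma~\ref{lem:sigma0}).

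There are two genuine differences in technique worth noting. First, for the extension lemma itself, the paper does not homogenise an arbitrary Euclidean extension; instead it works directly on the lattice: it shows continuity of the $0$-homogeneous extension via a $|\ln(|\ell_0|/|\ell_1|)|$ bound coming from the first-order difference estimates, and then obtains differentiability by extracting limits of the rescaled difference quotients $q\,\Delta_{e_1}\sigma(q\ell)$ and bootstrapping. Your homogenisation argument $k^{-(\tilde m-j)}b(x,k\xi)$ is natural, but be careful: the naive telescoping along $k\mapsto k+1$ only gives $O(1/k)$ control from the first-order symbol estimate on $b$, which is not summable; you will need to invoke the second-order estimates (or compare to nearby lattice rays, where the homogenised sequence is constant) to force convergence and smoothness. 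Second, for the passage between the two quantisations the paper does not appeal to an abstract identification of $\Op(\tilde\sigma)$ on periodic functions; it writes the local Euclidean symbol explicitly as $\sum_{\ell}\sigma(x,\ell)\overline{\widehat\chi(\ell-\xi)}$ and uses the Poisson summation formula together with Taylor estimates to show this agrees with the smooth homogeneous extension of $\sigma$ modulo $S^{-\infty}$. Your Borel-sum-and-compare route is a legitimate alternative and perhaps conceptually cleaner, while the paper's Poisson-summation computation has the advantage of giving the precise local expansion chart by chart (which is then reused for the residue formula in Corollary~\ref{cor_res_thm:PsiclT}).
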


Below, we will state and then prove a  stronger (but more technical) result for each classical pseudo-differential operators on $\bT^n$ which will imply Theorem \ref{thm:PsiclT}.
In order to state this stronger result, we need the following property which will allow us to identify a homogeneous toroidal symbol with a homogeneous symbol on $\bR^n$:

\begin{proposition}
	\label{prop_ext_sigma_hom}
If $\sigma$ is a toroidal symbol which is $m$-homogeneous in $\ell$ with $m\in \bC$, 
then there exists a unique continuous function 	in $(x,\xi)\in\bT^n\times(\bR^{n}\backslash \{0\})$
 which is $m$-homogeneous in $\xi$ and coincides with $\sigma$ 
on $ \bT^n\times(\bZ^n\backslash\{0\})$.
Still denoting by $\sigma$ this unique extension,  $\sigma $ is in fact smooth on $\bT^n\times (\bR^{n}\backslash \{0\})$.
\end{proposition}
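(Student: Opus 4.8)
\emph{Uniqueness} is immediate: for $x$ fixed an $m$-homogeneous continuous function on $\bR^n\backslash\{0\}$ is determined by its restriction to the sphere $\bS^{n-1}$, and that restriction is prescribed on the dense set of rational directions $\{\ell/|\ell|:\ell\in\bZ^n\backslash\{0\}\}$ by $\ell/|\ell|\mapsto|\ell|^{-m}\sigma(x,\ell)$, so continuity pins it down everywhere. Hence the substance of the proposition is existence together with smoothness. Throughout, we use that a toroidal symbol which is $m$-homogeneous lies in $S^{\Re m}(\bT^n)$, i.e.\ satisfies $|\partial_x^\beta\Delta^\alpha\sigma(x,\ell)|\leq C_{\alpha,\beta}\langle\ell\rangle^{\Re m-|\alpha|}$ for all $\alpha,\beta\in\bN_0^n$, together with the homogeneity $\sigma(x,r\ell)=r^m\sigma(x,\ell)$ valid for all $r\in\bN$, $\ell\in\bZ^n\backslash\{0\}$.

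\emph{The extension as a rescaling limit.} Fix $\xi\in\bR^n\backslash\{0\}$ and, for large $k\in\bN$, let $\ell_k=\ell_k(\xi)\in\bZ^n$ be a lattice point nearest $k\xi$. Using $\sigma(x,k'\ell_k)=(k')^m\sigma(x,\ell_k)$ and $\sigma(x,k\ell_{k'})=k^m\sigma(x,\ell_{k'})$ we bring $k^{-m}\sigma(x,\ell_k)$ and $(k')^{-m}\sigma(x,\ell_{k'})$ to the common frequency scale $kk'$; since $k'\ell_k$ and $k\ell_{k'}$ lie within distance $O(\max(k,k'))$ of $kk'\xi$, they are joined by a lattice path of that many steps along which $\langle\,\cdot\,\rangle\asymp kk'$, and summing the first-difference bound $|\sigma(x,\ell+e_j)-\sigma(x,\ell)|\leq C\langle\ell\rangle^{\Re m-1}$ along it yields, for $k,k'$ large,
$$
\big|\,k^{-m}\sigma(x,\ell_k)-(k')^{-m}\sigma(x,\ell_{k'})\,\big|\ \leq\ C_\xi/\min(k,k'),
$$
with $C_\xi$ locally bounded in $\xi$ and independent of $x$. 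Thus $\sigma(x,\xi):=\lim_{k\to\infty}k^{-m}\sigma(x,\ell_k)$ exists; the same bound over $\xi,\xi'$ in a compact subset of $\bR^n\backslash\{0\}$ shows $\sigma(x,\cdot)$ is locally Lipschitz there, uniformly in $x$, and running the construction on each $\partial_x^\beta\sigma$ (still an $m$-homogeneous symbol of order $\Re m$) shows, by locally uniform convergence, that $\sigma$ is smooth in $x$. The identity $\sigma(x,t\xi)=t^m\sigma(x,\xi)$ holds for $t\in\bN$ by construction, hence for all $t>0$ by continuity; and for $\xi=\ell\in\bZ^n\backslash\{0\}$ one has $\ell_k=k\ell$ and $k^{-m}\sigma(x,k\ell)=\sigma(x,\ell)$, so the extension restricts to $\sigma$ on $\bT^n\times(\bZ^n\backslash\{0\})$.

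\emph{Smoothness in $\xi$ --- the crux.} Here the higher-order difference bounds must be used. Pick $\theta\in\cS(\bR^n)$ with $\widehat\theta\in C_c^\infty((-1,1)^n)$ and $\widehat\theta(0)=1$; then $\sum_{\ell\in\bZ^n}\theta(\,\cdot-\ell)\equiv1$ by Poisson summation, and $\check\sigma(x,\xi):=\sum_{\ell\in\bZ^n}\sigma(x,\ell)\,\theta(\xi-\ell)$ is well defined and smooth. Because $\widehat\theta$ vanishes near $\bZ^n\backslash\{0\}$, every $\partial^\alpha\theta$ admits a Schwartz antidifference $G_\alpha$ of order $|\alpha|$ (that is, $\partial^\alpha\theta=\Delta^\alpha G_\alpha$ with $\Delta_jG(\eta):=G(\eta+e_j)-G(\eta)$), so discrete summation by parts gives $\partial_\xi^\alpha\check\sigma(x,\xi)=\sum_\ell(\Delta^\alpha\sigma)(x,\ell)\,G_\alpha(\xi-\ell)$ and therefore $|\partial_x^\beta\partial_\xi^\alpha\check\sigma(x,\xi)|\leq C'_{\alpha,\beta}\langle\xi\rangle^{\Re m-|\alpha|}$; i.e.\ $\check\sigma$ is a Euclidean symbol in $S^{\Re m}(\bT^n\times\bR^n)$ (this is essentially the toroidal--Euclidean symbol comparison of \cite{ruzhansky+turunen_bk}). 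Consequently the rescalings $\sigma_k(x,\xi):=k^{-m}\check\sigma(x,k\xi)$ obey $\partial_\xi^\alpha\sigma_k(x,\xi)=k^{|\alpha|-m}(\partial_\xi^\alpha\check\sigma)(x,k\xi)=O\big(k^{|\alpha|-\Re m}\langle k\xi\rangle^{\Re m-|\alpha|}\big)$, so $(\sigma_k)_k$ is bounded in $C^\infty$ on $\bT^n\times K$ for every compact $K\subset\bR^n\backslash\{0\}$; and using $\sum_\ell\theta(k\xi-\ell)=1$ with the same difference estimates one gets $\sigma_k(x,\xi)-k^{-m}\sigma(x,\ell_k(\xi))\to0$, hence $\sigma_k\to\sigma$ in $C^0$ (uniformly on such $K$). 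By Arzel\`a--Ascoli together with uniqueness of the limit, $\sigma_k\to\sigma$ in $C^\infty_{\mathrm{loc}}(\bT^n\times(\bR^n\backslash\{0\}))$, so $\sigma$ is smooth, which finishes the proof. The routine ingredients are the uniqueness, the existence of the limit, the homogeneity and the lattice agreement; the real obstacle is precisely this last step --- upgrading discrete difference control to genuine $\xi$-derivative control --- which the Euclidean-extension device packages cleanly.
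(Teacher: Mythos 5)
Your proof is correct, but it takes a genuinely different route from the paper's. The paper works directly at the level of the toroidal symbol: after obtaining continuity of the $0$-homogeneous extension from a logarithmic modulus-of-continuity estimate, it proves differentiability order by order by a finite-difference/subsequence argument --- for $\xi_0\in\bQ^n\backslash\{0\}$ it controls $\sigma(\xi_0+\frac{p}{q}e_1)-\sigma(\xi_0)$ by $r_0p\,\Delta_{e_1}\sigma(q\ell_0)$ up to a quadratic error, extracts a subsequence along which $q\Delta_{e_1}\sigma(q\ell)$ converges for all $\ell$ simultaneously, identifies the limit with $|\xi_0|^{-1}\partial_1\sigma(\xi_0)$, observes that $|\ell|\partial_1\sigma(\ell)$ is again a $0$-homogeneous $S^0$ symbol, and iterates. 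Your proof instead imports the Ruzhansky--Turunen Euclidean extension $\check\sigma(x,\xi)=\sum_\ell\sigma(x,\ell)\theta(\xi-\ell)$ with $\widehat\theta\in C_c^\infty((-1,1)^n)$, $\widehat\theta(0)=1$, uses discrete summation by parts (via the Schwartz antidifference $G_\alpha$ of $\partial^\alpha\theta$) to conclude $\check\sigma\in S^{\Re m}$, and then extracts the homogeneous limit of the rescalings $k^{-m}\check\sigma(x,k\cdot)$ by Arzel\`a--Ascoli. The two approaches buy different things: yours is shorter and more structural, reusing a standard device and converting the smoothness question into a $C^\infty_{\mathrm{loc}}$-compactness argument, at the cost of invoking (or quietly re-proving) the toroidal/Euclidean symbol comparison and handling the tail estimate in $\sigma_k(x,\xi)-k^{-m}\sigma(x,\ell_k(\xi))\to 0$; the paper's argument is more elementary and self-contained, staying entirely within discrete calculus on $\bZ^n$, but requires more delicate bookkeeping of the finite-difference estimates and the subsequence extraction. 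Both are sound, and the Euclidean-extension route is arguably the more transparent one for a reader already familiar with the Ruzhansky--Turunen framework.
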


Proposition \ref{prop_ext_sigma_hom} will be proved in Section \ref{subsec_pf_thmPsiclT}.
Note that as a consequence of Proposition \ref{prop_ext_sigma_hom}, 
a toroidal symbol which is $m$-homogeneous in $\ell$
is in $S^m_{cl}(\bT^n)$, $m\in \bC$.

The stronger result we alluded to above is the following.
Roughly speaking, it says that the poly-homogeneous expansion of a toroidal symbol is given by the poly-homogeneous expansion in local charts and vice versa:

\begin{proposition}
\label{prop_thm:PsiclT}
We fix a finite open covering $(\Omega_k)_k$ of $\bT^n$
and a subordinate partition of unity $(\chi_{k})_{k}$ of the form 
$\chi_k (x)= \psi_k(x-x_k)$ where the points 
$x_k\in \Omega_k$ are distinct and the function
$\psi_k\in \cD(\bT^n)$ is supported on a small neighbourhood of 0;
moreover, $0\leq \psi_k\leq 1$ and 
$\psi_k\equiv 1$ on an even smaller neighbourhood of 0.

Let $A\in \Psi^{m_1}(\bT^n)$. 
Then $\chi_{k_1} A\chi_{k_2}$ is smoothing for $k_1\not=k_2$
and if $k_1=k_2=k$, using the $x_k$ translations (see \eqref{eq_def_taux_0}) we have 
$$
\chi_k A\chi_k := \tau_{x_k}^{-1} A_k \tau_{x_k},
\qquad\mbox{where}\qquad 
A_k:=\psi_k \tau_{x_k}^{-1}  A \tau_{x_k}  \psi_k.
$$

Denoting by $\sigma=\Op_{\bT^n}^{-1} A$ the toroidal symbol of $A$, we have
$\sigma\in S^{m_1}(\bT^n)$ by Theorem \ref{thm:PsiT}.
 
\begin{enumerate}
\item 
If $\sigma$ is classical, i.e. $\sigma\in S^m_{cl}(\bT^n)$ with 
$\sigma\sim_h \sum_{j\in \bN_0} \sigma_{m-j}$, 
then the symbol of $A_k$ admits the poly-homogeneous expansions
$\sum_{j\in \bN_0} \psi_k(x) \sigma_{m-j}(x-x_k,\xi)$
where the functions $\sigma_{m-j}$ have been smoothly extended (see Proposition \ref{prop_ext_sigma_hom}).

\item 
If $A$ is classical on $\bT^n$ with complex order $m\in \bC$ then 
$A_k$ is classical on $\bR^n$, 
its symbol is supported in a small neighbourhood of $0$ in $x$ and
admits a poly-homogeneous expansion $\sum_{j\in \bN_0} a_{m-j}^{(k)}$;
moreover, $\sigma$ admits a poly-homogeneous expansion which coincides with 
$\sum_{j\in \bN_0} a_{m-j}^{(k)}(x-x_k,\ell)$ on $\{\chi_k\equiv1\}$
\end{enumerate}
\end{proposition}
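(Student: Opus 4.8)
The statement has two parts that are essentially converses of each other, and the engine for both is the explicit relation between toroidal symbols and local (Euclidean) symbols, together with Proposition~\ref{prop_ext_sigma_hom} which lets us move freely between homogeneous toroidal symbols on $\bT^n\times(\bZ^n\setminus\{0\})$ and homogeneous smooth symbols on $\bT^n\times(\bR^n\setminus\{0\})$. The first observation, which I would dispatch quickly, is that $\chi_{k_1}A\chi_{k_2}$ is smoothing when $k_1\ne k_2$: this is immediate because $\chi_{k_1}$ and $\chi_{k_2}$ have disjoint supports once the neighbourhoods are chosen small enough (the $x_k$ are distinct), so the integral kernel of $\chi_{k_1}A\chi_{k_2}$ is supported away from the diagonal, where $K_A$ is smooth. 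The reduction $\chi_kA\chi_k=\tau_{x_k}^{-1}A_k\tau_{x_k}$ with $A_k=\psi_k\tau_{x_k}^{-1}A\tau_{x_k}\psi_k$ is just the conjugation formula \eqref{eq:sigmatranslT}; here $A_k$, after transporting $\psi_k$ to a chart around $0$, is genuinely an operator on an open subset of $\bR^n$ with symbol compactly supported in $x$.

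\textbf{Part (1).} Assume $\sigma\in S^m_{cl}(\bT^n)$ with $\sigma\sim_h\sum_j\sigma_{m-j}$. By \eqref{eq:sigmatranslT}, the toroidal symbol of $\tau_{x_k}^{-1}A\tau_{x_k}$ is $\tau_{x_k}\sigma:(x,\ell)\mapsto\sigma(x-x_k,\ell)$, which is again classical with homogeneous components $(x,\ell)\mapsto\sigma_{m-j}(x-x_k,\ell)$. Multiplication by $\psi_k$ on both sides amounts, on the symbol side, to composing with the smooth multiplier $\psi_k$, which preserves classicality and multiplies each homogeneous component by $\psi_k(x)$ (the left $\psi_k$ produces the factor $\psi_k(x)$; the right $\psi_k$ contributes only lower-order and smoothing terms since $\psi_k\equiv1$ near $0$ and we only care about the expansion on $\{\chi_k\equiv1\}$, where effectively $\psi_k=1$). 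So the key analytic step is: \emph{the Euclidean symbol of $\Op_{\bT^n}(\sigma)$, viewed as an operator on a small chart, has a poly-homogeneous expansion whose terms are the smooth extensions (à la Proposition~\ref{prop_ext_sigma_hom}) of the $\sigma_{m-j}$, modulo $S^{-\infty}$.} I would prove this by comparing the two quantizations directly: the toroidal operator acts by $\sum_{\ell\in\bZ^n}e^{2i\pi x\cdot\ell}\sigma(x,\ell)\widehat f(\ell)$ while the Euclidean one integrates against $e^{2i\pi x\cdot\xi}a(x,\xi)\widehat f(\xi)\,d\xi$; the passage from sum to integral is the standard periodisation/Poisson-summation argument used to prove Theorem~\ref{thm:PsiT}, and it shows that a toroidal symbol extends to a Euclidean symbol agreeing with it on $\bZ^n$ modulo $S^{-\infty}$. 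Applied term by term to the homogeneous pieces (legitimate since each $\sigma_{m-j}$ is itself a toroidal symbol of order $m-j$ by Proposition~\ref{prop_ext_sigma_hom}, so its smooth homogeneous extension lies in $S^{m-j}_{cl}$), summing asymptotically gives the claimed expansion for $A_k$.

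\textbf{Part (2).} Now assume $A\in\Psi^m_{cl}(\bT^n)$ as a manifold operator. Then $A_k\in\Psi^m_{cl}(\bR^n)$ by the diffeomorphism-invariance of the classical calculus recalled in Section~\ref{subsec_prel_EPDO} (the chart maps are just translations here, so invariance is even more direct), and its symbol is compactly supported in $x$ because of the $\psi_k$ cut-offs; write $a^{(k)}\sim_h\sum_j a^{(k)}_{m-j}$. The toroidal symbol $\sigma$ of $A$ exists and lies in $S^{m_1}(\bT^n)=S^{\Re m}(\bT^n)$ by Theorem~\ref{thm:PsiT}. To see that $\sigma$ is classical with the stated expansion, I would again use the sum-to-integral comparison in reverse: restricting the Euclidean symbol $a^{(k)}(x-x_k,\cdot)$ (translated back) to $\xi\in\bZ^n$ recovers, modulo $S^{-\infty}(\bT^n)$, the toroidal symbol of $\chi_kA\chi_k$ on the region $\{\chi_k\equiv1\}$; since $(\chi_k^2)_k$ can be arranged to cover $\bT^n$ and the cross terms $\chi_{k_1}A\chi_{k_2}$ are smoothing, the toroidal symbol $\sigma$ agrees locally with $a^{(k)}(x-x_k,\ell)|_{\ell\in\bZ^n}$ modulo smoothing, hence inherits the poly-homogeneous expansion $\sum_j a^{(k)}_{m-j}(x-x_k,\ell)$ there. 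Patching these local expansions (they automatically agree on overlaps modulo $S^{-\infty}$, by uniqueness of the toroidal symbol in Theorem~\ref{thm:PsiT}) yields a global poly-homogeneous expansion of $\sigma$.

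\textbf{Main obstacle.} The crux is making the sum-versus-integral comparison precise \emph{at the level of full poly-homogeneous expansions}, not just for symbols of a fixed order: one must check that the map ``extend a toroidal symbol to a Euclidean symbol agreeing on $\bZ^n$'' sends classical toroidal symbols to classical Euclidean symbols and respects the homogeneous components, and conversely that restriction to $\bZ^n$ does the same. The subtlety is that an individual homogeneous component $\sigma_{m-j}$ is singular at $\ell=0$, so the comparison near the origin in frequency has to be handled separately (it only affects finitely many lattice points and contributes a smoothing error); away from the origin, Proposition~\ref{prop_ext_sigma_hom} guarantees the smooth homogeneous extension exists, and one shows the asymptotic sums of the extensions match by a Borel-summation / asymptotic-completeness argument in $S^m_{cl}$. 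Everything else — the disjoint-support smoothing claim, the conjugation identities, and diffeomorphism invariance — is routine bookkeeping.
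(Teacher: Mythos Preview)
Your plan is correct and follows essentially the same route as the paper. Both proofs reduce by translation to $k=0$, observe that the Euclidean symbol of $A_0$ is the $\bR^n$-Fourier transform of the (cut-off) convolution kernel and that its restriction to $\bZ^n$ is the toroidal symbol (this gives Part~(2) immediately), and then for Part~(1) establish that for a single homogeneous toroidal symbol $\sigma$ the Euclidean symbol agrees with the smooth homogeneous extension of Proposition~\ref{prop_ext_sigma_hom} modulo $S^{-\infty}$, the general case following by linearity.

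The place where the paper is more concrete than your sketch is exactly the step you flag as the main obstacle. Rather than invoking a generic ``periodisation argument'', the paper writes the Euclidean symbol of $A_0$ on $\{\psi_0\equiv1\}$ explicitly as the lattice sum $\sum_{\ell\in\bZ^n}\sigma(x,\ell)\,\overline{\widehat\chi(\ell-\xi)}$ (Parseval on $\bT^n$ after inserting a cut-off $\chi$ near $0$), uses the Poisson summation identities $\sum_\ell\widehat\chi(\ell-\xi)=1$ and $\sum_\ell(\xi-\ell)^\alpha\widehat\chi(\ell-\xi)=0$ to subtract $\sigma(x,\xi)$, and then controls the remainder by Taylor-expanding $\sigma(x,\cdot)$ about $\xi$ and splitting the $\ell$-sum into the annulus $\{|\xi|/2\le|\ell|\le2|\xi|\}$ (where the Taylor remainder and homogeneity give the decay) and its complement (where the rapid decay of $\widehat\chi$ wins). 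Your outline would have to supply precisely this estimate; everything else in your plan --- the disjoint-support smoothing claim, the conjugation by $\tau_{x_k}$, and the handling of the $\psi_k$ factors by working where $\psi_k\equiv1$ --- is routine and matches the paper.
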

In Proposition \ref{prop_thm:PsiclT} and in its proof
(given in Section \ref{subsec_pf_thmPsiclT}), we will allow ourselves to make no distinction between functions defined on $\bT^n$ or $\bR^n$ when they are supported in a small enough neighbourhood of $0$.

Note that Proposition \ref{prop_thm:PsiclT} and the local definition of the non-commutative residue (see Section \ref{subsec_Wod}) readily imply:

\begin{corollary}
\label{cor_res_thm:PsiclT}	
If $\sigma\in S^m_{cl}(\bT^n)$ with $m\in \bZ_n$, then 
$$
\res_x (A) = \int_{\bS^{n-1}} \sigma_{-n}(x,\xi) \ d\sig (\xi)
\quad\mbox{and}\quad
\res (A) = \int_{\bT^{n}\times \bS^{n-1}} \sigma_{-n}(x,\xi) \ dx d\sig (\xi).
$$
where $\sigma_{-n}$ is the smooth extension of the symbol  $\sigma_{-n}$ contained in the expansion
$\sigma\sim_h \sum_{j\in \bN_0} \sigma_{m-j}$ as granted by Proposition \ref{prop_ext_sigma_hom}.
\end{corollary}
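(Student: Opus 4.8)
The plan is to compute the residue density $\res_x(A)$ at each point of $\bT^n$ by reducing it, via Proposition~\ref{prop_thm:PsiclT}, to the Euclidean formula of Section~\ref{subsec_Wod} applied to the localised operators $A_k$. Here $n\geq 2$ and $m\in\bZ_n$, so the homogeneous component $\sigma_{-n}$ (that is, $\sigma_{m-j}$ with $j=m+n\in\bN_0$) does occur in $\sigma\sim_h\sum_j\sigma_{m-j}$, and by Theorem~\ref{thm:PsiclT} the operator $A=\Op_{\bT^n}(\sigma)$ lies in $\Psi^m_{cl}(\bT^n)$, so that $\res_x(A)$ is defined. I fix the covering $(\Omega_k)_k$ and the partition of unity $(\chi_k)_k$ as in Proposition~\ref{prop_thm:PsiclT}, arranging in addition (refining if needed) that the open sets $\{\chi_k\equiv 1\}$ still cover $\bT^n$.

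The first step is to use that the residue density is local: if $\chi\in\cD(\bT^n)$ equals $1$ on a neighbourhood of a point $x$, then $\chi A\chi\in\Psi^m_{cl}(\bT^n)$ has the same classical symbol as $A$ near $x$ (the symbol of $\chi A\chi-A$ is smoothing there), hence $\res_x(\chi A\chi)=\res_x(A)$. Fixing $x_0\in\bT^n$ and choosing $k$ with $\chi_k\equiv 1$ near $x_0$, this gives $\res_x(A)=\res_x(\chi_k A\chi_k)$ for $x$ near $x_0$. Next I pass to the chart: by Proposition~\ref{prop_thm:PsiclT} the operator $\chi_k A\chi_k$ is, up to the $x_k$-translation, the operator $A_k$, which is classical on $\bR^n$, has symbol supported near $0$ in $x$, and has a poly-homogeneous expansion whose degree-$(-n)$ term, transported back by the translation, coincides with $\sigma_{-n}(x,\cdot)$ on $\{\chi_k\equiv 1\}$ — this is part~(2) of that proposition together with the smooth extension of Proposition~\ref{prop_ext_sigma_hom}. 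Since the translation has Jacobian identically $1$, the transformation rule $|F'(x)|\,\res_{F(x)}(F^*A)=\res_x(A)$ recalled in Section~\ref{subsec_Wod} yields $\res_x(\chi_k A\chi_k)=\res_{F_k(x)}(A_k)$, where $F_k$ carries the neighbourhood of $x_k$ onto a neighbourhood of $0$. Feeding this into the Euclidean definition of $\res$ for $A_k$ and using the identification of its degree-$(-n)$ homogeneous component with $\sigma_{-n}$, I get
$$
\res_x(A)=\int_{\bS^{n-1}}\sigma_{-n}(x,\xi)\,d\sig(\xi)
$$
for $x$ near $x_0$. As $x_0$ is arbitrary and the $\{\chi_k\equiv 1\}$ cover $\bT^n$, this holds on all of $\bT^n$; integrating over $\bT^n$ and using Fubini (the integrand being smooth) gives the stated formula for $\res(A)$.

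The substance of the argument lies entirely in Proposition~\ref{prop_thm:PsiclT}; what remains is bookkeeping, and the only point demanding care is keeping the several chart identifications consistent — the translations $\tau_{x_k}$, the tacit identification of functions supported near $0$ on $\bT^n$ with functions on $\bR^n$, and the distinction between the cutoffs $\psi_k$ and $\chi_k$ — and checking that the degree-$(-n)$ homogeneous component transforms exactly as claimed. The factor $|F'(x)|$ in the transformation rule causes no trouble, since all the diffeomorphisms involved are translations.
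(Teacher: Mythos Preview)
Your argument is correct and is exactly the route the paper has in mind: the text only says that the corollary follows ``readily'' from Proposition~\ref{prop_thm:PsiclT} together with the local definition of $\res_x$ in Section~\ref{subsec_Wod}, and you have simply written out those details. One small remark: Part~(1) of Proposition~\ref{prop_thm:PsiclT} is the more direct citation here, since you start from a classical toroidal symbol $\sigma$ and want the Euclidean poly-homogeneous expansion of $A_k$; it already gives the degree-$(-n)$ term of $A_k$ as $\psi_k(x)\,\sigma_{-n}(x-x_k,\xi)$ (with $\sigma_{-n}$ smoothly extended), which on $\{\psi_k\equiv 1\}$ is exactly what you need after undoing the translation.
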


The expansion results applied to the Laplacian $\Delta$ of $\bT^n$ yields easily:

\begin{corollary}
\label{cor_TR_thm:PsiclT}	
Let  $\sigma\in S^m_{cl}(\bT^n)$ with $m\in \bC$. 
We set $A:=\Op_{\bT^n}(\sigma)$.
\begin{enumerate}
\item Let $\eta\in \cD(0,\infty)$.
Then we have an expansion as $t\to 0^+$:
$$
\tr(A\, \eta(\Delta)) = \sum_{\ell\in \bZ^n} \sigma(x,\ell)\eta(t|\ell|^2)
\sim 
c_{m+n}
t^{-\frac{m+n}{m_0}} 
+
c_{m-n-1}
t^{-\frac{m+n-1}{m_0}} 
+
\ldots 
$$	
The constants $c_{m+n-j}$  are of the form 
$c_{m+n-j} = c_{m+n-j}^{(\sigma)} c_{m+n-j}^{(\eta)}$
where $c_{m+n-j}^{(\sigma)}$ depends only on the homogeneous expansion of $\sigma$ and 
$$
c_{m+n-j}^{(\eta)}:=
\frac12
\int_{u=0}^{+\infty}
\eta(u) \ u^{\frac{m-j+n}{2}} \frac{du}u .
$$	

With the same constants $c_{m+n-j}^{(\sigma)}$, if $\Re m> -n$ then
we have the expansion as $R\to+\infty$
$$
\sum_{R \leq |\ell| \leq 2 R  } \int_{\bT^n} \sigma(x,\ell) dx
=
 \sum_{j=0}^{N-1} \frac{2^{m-j+n} -1}{m-j+n}  
 c_{m+n-j}^{(\sigma)}  R^{-m+n-j} +o(1),
$$
where $N\in \bN$ is the smallest integer such that $\Re m+n-N<0$.

If  $m\in \bZ_n$ then $c_0=c_0^{(\sigma)}c_0^{(\eta)}$ with
$$
c_0^{(\sigma)}=\res A,\qquad\mbox{and}\qquad
c_0^{(\eta)}=\frac 12 \int_0^{+\infty} \!\!\! \eta(u)\ \frac{du}u ,
$$  
and 
we have the expansion as $R\to+\infty$
$$
\sum_{R \leq |\ell| \leq 2 R  } \int_{\bT^n} \sigma(x,\ell) dx
=
 \sum_{j=1}^{m+n} \frac{2^{m-j+n} -1}{m-j+n}  
 c_{m+n-j}^{(\sigma)}  R^{-m+n-j} 
 +
\ln 2 \ \res A 
 +o(1).
$$

\item 
Here, $\Re m\geq -n$ with $m \notin \bZ$
and $N\in \bN$ is  such that $\Re m+n<N$.
Let $\eta\in \cD(\bR)$ 	such that  $\eta\equiv 1$ on a neighbourhood of 
$0$.
Then we have an expansion as $t\to 0^+$:
$$
\tr(A\, \eta(\Delta)) = \sum_{\ell\in \bZ^n} \sigma(x,\ell)\eta(t|\ell|^2) = 
\TR(A) + \sum_{j=0}^{N-1} c'_{m+n-j} t^{\frac{-m-n+j}{m_0}}  +o(1).
$$
The constants $c'_{m+n-j}$  are of the form 
$c'_{m+n-j} = {c'}_{m+n-j}^{(\sigma)} {c'}_{m+n-j}^{(\eta)}$
where ${c'}_{m+n-j}^{(\sigma)}$ depends only on the poly-homogeneous expansion of $\sigma$ and 
$$
{c'}_{m+n-j}^{(\eta)}:=
\frac1{2}
\int_{u=0}^{+\infty}
\eta(u) \ u^{\frac{m-j+n}{2}} \frac{du}u .
$$	
With the same constants ${c'}_{m+n-j}^{(\sigma)}$, 
we have the expansion as $R\to+\infty$
$$
\sum_{|\ell|<R} \int_{\bT^n} \sigma(x,\ell) dx
=
\TR (A) + \sum_{j=0}^{N-1} \frac{{c'}_{m+n-j}^{(\sigma)}} {m-j+n} R^{-m+n-j} +o(1).
$$
Consequently,  the canonical trace of the operator $A=\Op_{\bT^n}(\sigma)$ is given by the discrete finite part of $\int_{\bT^n} \sigma(x,\ell) dx$.
\end{enumerate}

\end{corollary}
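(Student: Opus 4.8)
The plan is to derive both parts by applying the real trace expansions of Theorem \ref{thm_myexp}, together with the identifications of Propositions \ref{prop_res_in_exp} and \ref{prop_TR_in_exp}, to the operator $\cL=\Delta$ on the manifold $M=\bT^n$, and then to translate the result into the language of toroidal symbols. We take $n\geq 2$, as required both by Theorem \ref{thm_myexp} and by the definition of the non-commutative residue. First I would record that $\Delta$ is a classical, elliptic, self-adjoint operator of order $m_0=2$ on $\bT^n$, with $\spec(\Delta)=\{|\ell|^2:\ell\in\bZ^n\}\subset[0,\infty)$, so that $\spec(\Delta)\cap(-\infty,0]=\{0\}$, and that by Theorem \ref{thm:PsiclT} the operator $A=\Op_{\bT^n}(\sigma)$ lies in $\Psi^m_{cl}(\bT^n)$. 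Hence $\cL=\Delta$ satisfies the hypotheses of Theorem \ref{thm_myexp}.

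Next I would use that $\eta(t\Delta)$ is the Fourier multiplier with the finitely supported symbol $\ell\mapsto\eta(t|\ell|^2)$, so that $A\,\eta(t\Delta)$ is the toroidal operator with symbol $(x,\ell)\mapsto\sigma(x,\ell)\eta(t|\ell|^2)$ and is of finite rank; reading its trace off the diagonal of its integral kernel as in Section \ref{subsec_psiT} gives
\[
\tr\bigl(A\,\eta(t\Delta)\bigr)=\sum_{\ell\in\bZ^n}\eta(t|\ell|^2)\int_{\bT^n}\sigma(x,\ell)\,dx .
\]
Substituting this identity into Theorem \ref{thm_myexp}(1),(3) with $m_0=2$ gives the first expansion in Part (1), with $c^{(\eta)}_{m+n-j}=\tfrac12\int_0^{\infty}\eta(u)\,u^{(m-j+n)/2}\,\tfrac{du}{u}$; and when $m\in\bZ_n$, Proposition \ref{prop_res_in_exp}(3) yields $c^{(\sigma)}_0=\res A$. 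The assertion that $c^{(\sigma)}_{m+n-j}$ depends only on the poly-homogeneous expansion $\sigma\sim_h\sum_j\sigma_{m-j}$ is exactly Proposition \ref{prop_thm:PsiclT}, which says that this global expansion coincides with the local one appearing in Theorem \ref{thm_myexp}(3). Feeding the same identity into Theorem \ref{thm_myexp}(2) and using Proposition \ref{prop_TR_in_exp}(2), i.e. $c'(A)=\TR(A)$, gives the heat-type expansion in Part (2).

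It then remains to pass from these smooth spectral cut-offs to the sharp lattice sums. With $t=R^{-2}$, the dyadic sum $\sum_{R\leq|\ell|\leq 2R}\int_{\bT^n}\sigma(x,\ell)\,dx$ equals $\sum_{\ell}\int_{\bT^n}\sigma(x,\ell)\,dx\,\mathbf 1_{[1,4]}(t|\ell|^2)$, and the ball sum $\sum_{|\ell|<R}\int_{\bT^n}\sigma(x,\ell)\,dx$ is the same with $\mathbf 1_{[0,1)}$; the formal coefficient $\tfrac12\int_0^{\infty}\mathbf 1_{[1,4]}(u)\,u^{(m-j+n)/2}\,\tfrac{du}{u}$ evaluates to $\tfrac{2^{m-j+n}-1}{m-j+n}$, and to $\ln 2$ when $m-j+n=0$, which is exactly the predicted shape. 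To make this rigorous I would write each sharp cut-off as a smooth cut-off supported in $(0,\infty)$, handled by the previous paragraph, plus a boundary-layer term $\sum_{\ell}\int_{\bT^n}\sigma(x,\ell)\,dx\,\chi(t|\ell|^2)$ with $\chi$ bounded, compactly supported in $(0,\infty)$ and smooth away from one jump point, and then control the latter by the classical asymptotic expansion of $\sum_{|\ell|<R}f(\ell)$ in powers of $R$ and $\log R$, valid for $f$ smooth and homogeneous on $\bR^n\setminus\{0\}$ (the expansions used by Pietsch \cite{pietsch} and by L\'evy--Jimenez--Paycha \cite{Paycha+_tams}). For the dyadic sum the finite-part contributions attached to the radii $R$ and $2R$ cancel, leaving only the scale-invariant $\ln 2\cdot\res A$ carried by the degree-$(-n)$ term $\sigma_{-n}$, which is present precisely when $m\in\bZ_n$; for the ball sum they do not cancel, and matching the constant with the smooth-cut-off expansion coming from Theorem \ref{thm_myexp}(2) and Proposition \ref{prop_TR_in_exp}(2) forces it to be $\TR(A)$, which is the stated description of the canonical trace as the discrete finite part.

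The main difficulty is precisely this smooth-to-sharp step: the boundary-layer contribution is a priori of the same order $R^{m+n}$ as the leading term, and since $\sigma$ is complex-valued one cannot dispose of it by squeezing between smooth cut-offs from above and below, so one genuinely needs the classical lattice-point expansion for homogeneous symbols, or equivalently a version of the method of \cite{myexp} applied to the sharp spectral projections $\mathbf 1_{[0,R^2)}(\Delta)$. All the remaining steps are routine manipulations with the symbol-operator dictionary of Sections \ref{subsec_psiT}--\ref{subsec_psiclT}.
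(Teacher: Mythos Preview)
Your derivation of the smooth expansions (the first displays in Parts (1) and (2)) matches the paper's exactly: both apply Theorem~\ref{thm_myexp} and Propositions~\ref{prop_res_in_exp}, \ref{prop_TR_in_exp} directly to $\cL=\Delta$ on $\bT^n$, after rewriting $\tr(A\,\eta(t\Delta))=\sum_{\ell}\eta(t|\ell|^2)\int_{\bT^n}\sigma(x,\ell)\,dx$.

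For the passage to the sharp cut-off expansions in $R$, however, the paper takes a more elementary route than you propose. You suggest splitting $1_I$ into a smooth piece plus a boundary layer and controlling the latter by importing the full lattice-point asymptotics for homogeneous functions from \cite{pietsch,Paycha+_tams}. The paper instead approximates the indicator $1_I$ (with $I=[1,4]$ for Part~(1), $I=[0,1]$ for Part~(2)) by smooth $\eta_k$ satisfying $\eta_k\equiv 1$ on $I$ and $\mathrm{supp}\,\eta_k\subset I+(-1/k,1/k)$, applies the already-established smooth expansion to each $\eta_k$, and bounds the discrepancy $\tr\bigl(A(1_I-\eta_k)(R^{-2}\Delta)\bigr)$ by a crude count of lattice points in a thin spherical shell, yielding $O(R^{\Re m+n}/k)$; one then lets $k\to\infty$, using also $c^{(\eta_k)}_{m+n-j}\to c^{(1_I)}_{m+n-j}$. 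Your stated worry that the boundary-layer contribution ``is a priori of the same order $R^{m+n}$ as the leading term'' and therefore forces one to invoke the full lattice asymptotics is precisely what this estimate circumvents: the shell has relative thickness $1/k$, so the boundary contribution is $O(R^{\Re m+n}/k)$, not $O(R^{\Re m+n})$, and no squeezing is needed. Your approach via the cited external results is also valid, but it imports machinery of essentially the same strength as the statement being proved, whereas the paper's argument stays self-contained modulo Theorem~\ref{thm_myexp}.
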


\begin{proof}[Proof of Corollary \ref{cor_TR_thm:PsiclT}]
The first results in Part (1) and (2) are directs applications of Theorem \ref{thm_myexp} and Proposition 
\ref{prop_TR_in_exp}.
For the expansion in $R=t^{-2}$, we apply these results to suitable functions $\chi_k\in C_c^\infty(\bR)$ 
approximating the indicator $1_I$  of the interval $I=[1,4]$ for Part (1) or of the interval $I=[0,1]$ for Part (2).
Suitable approximations are for instance functions satisfying 
 $0\leq \eta_k \leq 1$, $\eta_k\equiv 1$ on $I$ and $\eta_k\equiv 0$ outside $I + (-\frac 1k,\frac 1k)$.
As the Laplacian $\Delta$ is invariant under translation, we compute easily  for $R>1$
$$
\tr\left( A (1_{I} -\eta_k)(R^{-2}\Delta))\right)
=
\sum_{\ell \in \bZ^n}\int_{\bT^n} 
 \sigma(x,\ell)dx
\left( 1_{I}-\eta_k\right)(R^{-2}|\ell|^2).
$$
We also estimate for $I=[1,4]$
\begin{align*}
 \sum_{\ell \in \bZ^n}
 \langle\ell\rangle^{\Re m}
  \left( 1_{I}-\eta_k\right)(R^{-2}|\ell|^2) 
 &\leq 
 \sum_{\substack{R\sqrt {1-k} \leq |\ell|\leq R\\
 \mbox{\tiny or } 2R \leq |\ell|\leq 2R \sqrt {1+k}
 }}
  \frac 1{|B(\ell,1/2)|}\int_{B(\ell,1/2)}
  \langle\xi\rangle^{\Re m} d\xi
	\\
&\lesssim 
\int_{\substack{R\sqrt {1-k} -\frac 12 \leq |\xi|\leq R+\frac 12\\
 \mbox{\tiny or } 2R -\frac 12 \leq |\ell|\leq 2R \sqrt {1+k}+\frac 12
 }}
  \langle\xi\rangle^{\Re m} d\xi
  \lesssim R^{\Re m+n}\frac 1k,
\end{align*}
and similarly if $I=[0,1]$.
Hence, in both case, we have 
$$
\tr\left( A (1_{I} -\eta_k)(R^{-2}\Delta))\right) = R^{\Re m+n}O(\frac 1k).
$$
We check easily the convergences of the coefficients
${c'}_{m+n-j}^{(\eta_k)}\longrightarrow_{k\to+\infty}
{c'}_{m+n-j}^{(1_I)}$.
The results follow by taking  $k\to +\infty$. 
\end{proof}

A modification of the proofs of Theorem \ref{thm_myexp} Part (2) and Proposition 
\ref{prop_TR_in_exp} Part (2) given in \cite{myexp} adapted
to the case of the torus would also imply that the canonical trace density at $x$ of the operator $A=\Op_{\bT^n}(\sigma)$ is given by the discrete finite part of $ \sigma(x,\ell)$.
The results regarding the canonical trace on the torus have already been obtained in \cite[Section 5]{Paycha+_tams}.
The particular case of the limit of 
$\sum_{R \leq |\ell| \leq 2 R  } \int_{\bT^n} \sigma(x,\ell) dx$ in the case  $m=-n$ and $R=2^K$
have been shown in \cite[Theorem 11.15]{pietsch}.

\subsection{Proofs of Propositions \ref{prop_ext_sigma_hom} and
\ref{prop_thm:PsiclT}}
\label{subsec_pf_thmPsiclT}

The main ingredient in the proof of Proposition \ref{prop_ext_sigma_hom} is the following:
\begin{lemma}
\label{lem:sigma0}
Let $\sigma\in S^0(\bT^n)$ be a toroidal symbol which 
is independent of $x$ and 0-homogeneous in $\ell \in \bZ^n\backslash\{0\}$, 
that is, $\forall\ell\in \bZ^n\backslash\{0\}$ and $r\in \bN$, 
$\sigma(r\ell)=\sigma(\ell)$.
There exists a unique continuous function
 $\sigma \in C(\bR^{n}\backslash \{0\})$
which is 0-homogeneous and coincides with $\sigma$ 
on $ \bZ^n\backslash\{0\}$.
Furthermore $\sigma$ is smooth.
\end{lemma}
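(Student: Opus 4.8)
The plan is to settle uniqueness quickly, then build a continuous $0$-homogeneous extension by hand, and finally — the real point — prove that it is smooth. \emph{Uniqueness:} if $h_1,h_2\in C(\bR^n\backslash\{0\})$ are $0$-homogeneous and agree on $\bZ^n\backslash\{0\}$, they agree on the set $D:=\{\ell/|\ell|:\ell\in\bZ^n\backslash\{0\}\}$ of rational directions, which is dense in $\bS^{n-1}$, hence on $\bS^{n-1}$ by continuity and on $\bR^n\backslash\{0\}$ by homogeneity. Also, $0$-homogeneity on the lattice makes $\sigma$ a well-defined function $f$ on $D$ through $f(\ell/|\ell|):=\sigma(\ell)$, so it is enough to show that $f$ extends to an element of $C^\infty(\bS^{n-1})$: its $0$-homogeneous extension to $\bR^n\backslash\{0\}$ will be the function we want.

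\emph{Construction and continuity.} By \eqref{eq:SmT} with $m=0$ and no $x$-dependence, $\sigma$ obeys $|\Delta^\alpha\sigma(\ell)|\le C_\alpha\langle\ell\rangle^{-|\alpha|}$, so it extends to a Euclidean symbol $\tilde\sigma\in S^0(\bR^n)$ with $\tilde\sigma|_{\bZ^n}=\sigma$ (see \cite{ruzhansky+turunen_bk}; alternatively take $\tilde\sigma(\xi)=\sum_{\ell\in\bZ^n}\sigma(\ell)\theta(\xi-\ell)$ with $\sum_\ell\theta(\,\cdot\,-\ell)\equiv1$, $\theta\in\cD(\bR^n)$, $\theta(\ell)=\delta_{\ell,0}$, and verify the symbol bounds by Abel summation against the differences $\Delta^\alpha\sigma$). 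Since $\tilde\sigma(2\,\cdot\,)\in S^0(\bR^n)$ as well, $g:=\tilde\sigma(2\,\cdot\,)-\tilde\sigma\in S^0(\bR^n)$; and because $\sigma$ is $0$-homogeneous on the lattice, $g(\ell)=\sigma(2\ell)-\sigma(\ell)=0$ for every $\ell\in\bZ^n$. Combining $g|_{\bZ^n}=0$ with $|\nabla g(\xi)|\lesssim\langle\xi\rangle^{-1}$ and the mean value theorem gives $|g(\xi)|\lesssim\langle\xi\rangle^{-1}$ on $\bR^n$, so $\tilde\sigma(2^k\xi)=\tilde\sigma(\xi)+\sum_{j=0}^{k-1}g(2^j\xi)$ converges uniformly on compact subsets of $\bR^n\backslash\{0\}$ to a continuous function $h$. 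For $\ell\in\bZ^n\backslash\{0\}$ and a positive rational $q$, the point $q\,2^k\ell$ is an integer multiple of $\ell$ once $2^k$ clears the denominator of $q$, whence $\tilde\sigma(q\,2^k\ell)=\sigma(\ell)$ for large $k$; thus $h\equiv\sigma(\ell)$ on the dense subset $\{q\ell:q\in\bQ,\ q>0\}$ of the ray $\bR_{>0}\ell$, and so on the whole ray by continuity. As the rays through $D$ are dense in $\bR^n\backslash\{0\}$, a further density/continuity argument gives $h(\lambda\xi)=h(\xi)$ for all $\xi\neq0$, $\lambda>0$; hence $h$ is $0$-homogeneous, $h\equiv\sigma$ on $\bZ^n\backslash\{0\}$, and $h(\xi)=f(\xi/|\xi|)$ with $f:=h|_{\bS^{n-1}}\in C(\bS^{n-1})$.

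\emph{Smoothness.} It remains to prove $f\in C^\infty(\bS^{n-1})$, for then $h(\xi)=f(\xi/|\xi|)$ is smooth on $\bR^n\backslash\{0\}$ as a composition of smooth maps. The crucial observation is that the restrictions $u_r:=\tilde\sigma(r\,\cdot\,)|_{\bS^{n-1}}$, $r\ge1$, are smooth and uniformly bounded in \emph{every} $C^p(\bS^{n-1})$: differentiating $\tilde\sigma\circ(r\,\cdot\,)$ with $p$ tangential vector fields of $\bS^{n-1}$ produces, by the chain rule, a finite sum of terms in which some $(\partial^\gamma\tilde\sigma)(r\omega)$ with $|\gamma|=j\le p$ is paired with $j$ derivatives (of total order $p$) of the embedding $\omega\mapsto r\omega$, which is linear in $r$; each such term is thus $O(r^{j})\,O(\langle r\rangle^{-j})=O(1)$, uniformly in $r\ge1$ and $\omega\in\bS^{n-1}$. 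Hence $\{u_{2^k}\}_{k\ge0}$ is bounded in $C^{p+1}(\bS^{n-1})$ and therefore precompact in $C^p(\bS^{n-1})$ by Arzel\`a--Ascoli; since it already converges to $f$ in $C^0$, it converges to $f$ in $C^p$, so $f\in C^p(\bS^{n-1})$. As $p$ was arbitrary, $f\in C^\infty(\bS^{n-1})$, completing the proof.

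\emph{Expected main obstacle.} Uniqueness and the construction of a continuous $0$-homogeneous extension are routine; the substance is the smoothness, i.e.\ upgrading the $C^0$ convergence of the dilates $\tilde\sigma(2^k\,\cdot\,)$ on $\bS^{n-1}$ to convergence in $C^\infty$. What makes this go through is that for an $S^0$-symbol the radial (dilation) derivatives decay, so tangential differentiation along $\bS^{n-1}$ costs no power of $r$ — precisely the uniform $C^p$ bound needed to invoke Arzel\`a--Ascoli.
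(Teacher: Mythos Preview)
Your proof is correct and takes a genuinely different route from the paper's. The paper works entirely on the lattice: for continuity it extracts a modulus-of-continuity estimate $|\sigma(\ell_0)-\sigma(\ell_1)|\lesssim|\ln(|\ell_0|/|\ell_1|)|$ directly from the $S^0$ difference bounds, and for smoothness it constructs $\partial_1\sigma$ by hand as the limit of $q\,\Delta_{e_1}\sigma(q\ell)$ as $q\to\infty$, verifies this limit is a $(-1)$-homogeneous symbol in $S^{-1}$, and then iterates. Your approach instead passes to a Euclidean extension $\tilde\sigma\in S^0(\bR^n)$ and exploits the dilation structure of $S^0$: continuity comes from a telescoping dyadic argument (the Cauchy condition for $\tilde\sigma(2^k\cdot)$ follows from $\tilde\sigma(2\cdot)-\tilde\sigma=O(\langle\xi\rangle^{-1})$), and smoothness from the uniform $C^p(\bS^{n-1})$ boundedness of the dilates $\tilde\sigma(r\cdot)|_{\bS^{n-1}}$ together with Arzel\`a--Ascoli. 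Your smoothness argument is cleaner and more conceptual --- all of $C^\infty$ at once from compactness, rather than one derivative at a time --- while the paper's approach has the modest advantage of staying self-contained on the lattice and making the derivative symbol $|\ell|\,\partial_1\sigma(\ell)\in S^0(\bT^n)$ explicitly available.

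One small slip to fix: the claim that ``$q\,2^k\ell$ is an integer multiple of $\ell$ once $2^k$ clears the denominator of $q$'' only holds when the denominator of $q$ is a power of $2$. This is harmless, since dyadic rationals are still dense in $(0,\infty)$ and your continuity argument then gives $h(\lambda\ell)=\sigma(\ell)$ for all $\lambda>0$ exactly as written; just restrict the explicit claim to dyadic~$q$.
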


\begin{proof}[Proof of Lemma \ref{lem:sigma0}]
For any $\ell_0,\ell_1\in \bZ^n\backslash\{0\}$ such that
$[\ell_0]_j [\ell_1]_j \geq 0$ for $j=1,\ldots n$,  
the membership of $\sigma$ in $S^0(\bT^n)$ and comparison with integrals yield
$$
|\sigma(\ell_0)-\sigma(\ell_1)|\lesssim_{\sigma} |\ln \frac{|\ell_0|}{|\ell_1|}|.
$$
This together with the homogeneity of $\sigma$ implies  that $\sigma$ extends uniquely to a continuous function which is defined on $\bR^n\backslash\{0\}$
and $0$-homogeneous, and for which we keep the same notation $\sigma$. 

To show that $\sigma$ is differentiable, we start with the following observation: 
for every (fixed) $\xi_0\in \bQ^n\backslash\{0\}$, 
choosing $r_0\in \bN$ such that $\ell_0:=r_0\xi_0\in \bZ^n$, 
we have for any $p,q\in \bZ$, $0\leq |p|< q$:
\begin{equation}
\label{eq1_pf_lem:sigma0}
\left|\sigma\left(\xi_0 + \frac pq e_1\right) -\sigma(\xi_0) - r_0 p \Delta_{e_1} \sigma(q \ell_0)\right|
=
\left|\sum_{j=0}^{r_0p-1} \Delta_{je_1}\Delta_{e_1} \sigma(q\ell_0)\right|
\lesssim |q\ell_0|^{-2} (r_0|p|)^2,
	\end{equation}
having  
the membership of $\sigma$ in $S^0(\bT^n)$ and comparison with integrals.
The second observation is 
that for every $\ell\in \bZ^n\backslash\{0\}$, 
the sequence $(q\Delta_{e_1} \sigma(q\ell))_{q\in \bN}$ is bounded
because $\sigma\in S^0(\bT^n)$.
Therefore,we can extract a converging subsequence.
We can also assume that the subsequence $(q_j)_{j\in \bN}$ is the same for all $\ell\in \bZ^n\backslash\{0\}$; this yields a symbol in $S^{-1}(\bT^n)$.
As $|\ell|\in S^1(\bT^n)$, the symbol $\sigma_1$ defined via
$$
\sigma_1(\ell) := |\ell| \lim_{j\to \infty} q_j \Delta_{e_1} \sigma(q_j \ell), 
$$ 
is in $S^0(\bT^n)$.  
If $h\in (-1,1)$, 
applying \eqref{eq1_pf_lem:sigma0} with $p_j:=\lfloor hq_j\rfloor$ and $q_j$, then taking $j\to+\infty$, we obtain by continuity of $\sigma$
\begin{equation}
\label{eq2_pf_lem:sigma0}
\left|\sigma\left(\xi_0 + h e_1\right) -\sigma(\xi_0) -  \frac{h}{|\xi_0|} \sigma_1 (\ell_0)\right|
\lesssim |\xi_0|^2 h^2.
	\end{equation}
This shows that $\partial_1\sigma(\xi_0)$ exists  when $\xi_0\in \bQ^n\backslash\{0\}$.
Consequently, 
$(q\Delta_{e_1} \sigma(q\ell))_{q\in \bN}$ converges to $\partial_1 \sigma (\ell)$ as $q\to +\infty$ and this implies that the symbol given by $\partial_1 \sigma$ is  in $S^{-1}(\bT^n)$ and is $(-1)$-homogeneous. 
Therefore,the symbol $\sigma_1\in S^0(\bT^n)$ coincide with $\{|\ell| \partial_1 \sigma(\ell):\ell\in \bZ^n\}$ and is therefore 0-homogeneous. As in the first paragraph, we keep the same notation for its continuous extension to $\bR^n\backslash\{0\}$. 
The continuity of $\sigma$ and $\sigma_1$ together with \eqref{eq2_pf_lem:sigma0} implies that
$\partial_1\sigma$ exists and is continuous on $\bR^n\backslash\{0\}$.
Naturally the result is true for the other partial derivatives $\partial_2,\ldots,\partial_n$.
This implies that the function $\sigma$ is $C^1$ on $\bR^n\backslash\{0\}$.
We can also apply this result to $\sigma_1$ which is therefore also $C^1$.
Applying this result recursively and in all the directions shows that $\sigma$ is in fact smooth. 
\end{proof} 

The proof of Proposition \ref{prop_ext_sigma_hom} follows readily:

\begin{proof}[Proof of Proposition \ref{prop_ext_sigma_hom}]
	The case of $m=0$ is a simple adaptation of the proof of Lemma \ref{lem:sigma0} 
with the addition of  the smooth dependence in $x$; it is left to the reader.
For any $m\in \bC$, it suffices to apply the case $m=0$ to the symbol 
given by $|\ell|^{-m} \sigma(x,\ell)$ for $\ell\not=0$.
\end{proof}

We can now show Proposition \ref{prop_thm:PsiclT}.

\begin{proof}[Proof of Proposition \ref{prop_thm:PsiclT}]
We observe that the properties of translations (see \eqref{eq:sigmatranslT})
implies that it suffices to consider the case of $A_k$ with $k=0$.
The convolution kernel of $A_0$ is 
$$
\kappa^{(0)}_x(y)=\psi_0(x) \kappa_x(y) \psi_0(x-y).
$$
So the symbol of the pseudo-differential operator  $A_0\in \Psi^{m_1}(\bR^n)$ is $(x,\xi)\mapsto \cF_{\bR^n} \kappa_x^{(0)}(\xi)$ on $\bR^n$;
when restricted to $\xi\in \bZ^n$, this is the symbol of $A_0\in \Psi^{m_1}(\bT^n)$ because of the properties of the support of $\kappa^{(0)}$. Hence Part (2) follows. It remains to prove Part (1).
By refining the open covering, it suffices to prove the properties in Parts (1)  for a neighbourhood of 0 included in $\{\psi_0\equiv 1\}$.
Let $V$ be a neighbourhood of 0 included in $\{\psi_0\equiv 1\}$
and let  $\chi\in \cD(\bT^n)$ valued in $[0,1]$ with $\chi\equiv 1$ near 0 but supported in a neighbourhood of 0 so small that $V+ \supp \chi \subset \{\psi_0\equiv 1\}$. 
Then for $x\in V$, 
we have
$$
\kappa^{(0)}_x(y)= \kappa_x(y)\chi(y) 
+\kappa_x(y)(1-\chi)(y)  \psi_0(x-y).
$$
The second term is a smoothing kernel, so the symbol of $A_k$ is given modulo $S^{-\infty}$ by
\begin{align*}
\cF_{\bR^n}\kappa_x(y)\chi(y) 
&=
\int_{\bR^n} 
 \kappa_x(y) \chi(y)
 e^{-2i\pi y\xi} dy 
=
\int_{\bT^n}\ldots
=
\sum_{\ell\in \bZ^n}
\sigma(x,\ell) 
\overline{ \cF_{\bT^n} \{\chi(y) e^{2i\pi \cdot  \xi}\}(\ell)}
\\&=
\sum_{\ell\in \bZ^n}\sigma(x,\ell) 
\overline{\widehat \chi(\ell-\xi)},
\end{align*}
by Parseval formula for Fourier series,
where $\widehat \chi := \cF_{\bR^n}\chi \in \cS(\bR^n)$.
 
We assume that $\sigma$ is $m$-homogeneous in $\ell\not=0$
and we keep the same notation for its smooth homogeneous extension to $\bT^n\times (\bR^n\backslash\{0\})$ granted by Proposition \ref{prop_ext_sigma_hom}.
The Poisson summation formula and the properties of $\chi$  yield
$$
\sum_{\ell\in \bZ^n}\widehat\chi (\ell-\xi) 
=1
\quad\mbox{and}\quad
\sum_{\ell\in \bZ^n}(\xi -\ell)^\alpha\widehat\chi (\ell-\xi) 
=0,
$$
for any $\alpha\in \bN_0^n\backslash\{0\}$ and $\xi\in \bR^n$.
Therefore, we have for any $N\in \bN_0$
$$
\cF_{\bR^n} \{\chi \kappa_x\}(\xi)-
\sigma(x,\xi) 
=
\sum_{\ell\in \bZ^n}
 \left(\sigma(x,\ell) - \sum_{|\alpha|\leq N} \frac{(\xi -\ell)^\alpha}{\alpha!}  \partial_\xi^\alpha\sigma(x,\xi)  \right) 
 \overline{ \widehat\chi (\ell-\xi)}.
$$
For $r>0$, let $A_r:=\{\ell\in \bZ^n: r/2 \leq |\ell|\leq 2r\}$ denote the set of integers in the annulus with radii $r/2$ and $2r$.
We decompose the last sum above as 
$\sum_{\ell \notin A_{|\xi|}}+\sum_{\ell\in A_{|\xi|}}$
and we assume $|\xi|\geq 1$ large.
 As $\sigma \in C^\infty(\bT^n\times (\bR^n\backslash\{0\}))$ is $m$-homogeneous, 
 and $\widehat \chi\in \cS(\bR^n)$, 
  for any $N_1\in \bN$,
\begin{align*}
| \sum_{\ell \notin A_{|\xi|}}|
& \lesssim
\sum_{\ell \notin A_{|\xi|}} (|\ell|^{\Re m} + \langle\xi-\ell\rangle^N |\xi|^{\Re m})
 \langle \xi -\ell\rangle^{-N_1}
\\
& \lesssim
 \sum_{|\ell|\leq |\xi|/2}
 |\xi|^{\Re m+N-N_1}
+ \sum_{|\ell|\geq 2|\xi|}
 |\ell|^{\Re m+N-N_1}
 \lesssim |\xi|^{\Re m+N-N_1+n}.
\end{align*}
 We write the second sum $\sum_{A_{|\xi|}}$ 
 as $\sum_{A_{|\xi|}\cap B(\xi, |\xi|^{1/2})} +\sum_{A_{|\xi|}\cap \ {}^c\! B(\xi, |\xi|^{1/2})}  $.
  The Taylor estimates and the homogeneity of $\sigma$ imply
 $$
 |\sum_{A_{|\xi|}\cap B(\xi, |\xi|^{1/2})}|
 \lesssim 
 \sum_{A_{|\xi|}\cap B(\xi, |\xi|^{1/2})}
 |\xi-\ell|^{N+1} |\xi|^{\Re m-(N+1)}
 \lesssim 
 |\xi| ^{\Re m-\frac{N+1}2 +n}.
 $$
 For the same reasons and because $\widehat \chi\in \cS(\bR^n)$, we have 
 $$
 |\sum_{A_{|\xi|}\cap \ {}^c\!B(\xi, |\xi|^{1/2})}|
 \lesssim 
 \sum_{A_{|\xi|}\cap \ {}^c\!B(\xi, |\xi|^{1/2})}
 |\xi-\ell|^{N+1} |\xi|^{\Re m-(N+1)} \langle \xi -\ell\rangle^{N}
 \lesssim 
 |\xi| ^{\Re m-\frac{N}2 +n}.
 $$
 
The estimates above imply that for all $N'\in \bN$ and $\xi\in \bR^n$, $|\xi|\geq 1$, we have
$$
|\cF_{\bR^n} \{\chi \kappa_x\}(\xi)-
\sigma(x,\xi)|\lesssim_{\sigma,N'}  |\xi|^{-N'}.
$$
We can estimate in the same way 
$\partial_x^\beta\partial_\xi^\alpha \left\{\cF_{\bR^n} \{\chi \kappa_x\}(\xi)-
\sigma(x,\xi)\right\}$.
This shows that $\cF_{\bR^n} \{\chi \kappa_x\}(\xi)=\chi(x)\sigma$ modulo $S^{-\infty}$ for $x\in V$.
By linearity, it is true for any symbol $\sigma\in S^m_{cl}(\bT^n)$ with 
$\sigma\sim_h \sum_{j\in \bN_0} \sigma_{m-j}$.
This concludes the proof of Proposition \ref{prop_thm:PsiclT}.
\end{proof}

\section{Compact Lie groups}
\label{sec_G}

In this section we extend to the non-commutative setting some of the results that were proved in Section \ref{sec_T}.

\subsection{Notations and conventions}
\label{subsec_not_sec_G}
In Section \ref{sec_G}, $G$ always denotes a connected compact Lie group and $n>1$  its dimension.
In this section we set the notation and convention regarding the group $G$.
References include  \cite{hall_bk,helgason_bk,knapp_bk},
see also
\cite{monJFA,monarxiv}.

\subsubsection{Representations}
In this paper, a representation of $G$ is any continuous (hence smooth) group homomorphism $\pi$ from $G$ to the set of automorphisms of a finite dimensional complex vector space.
We will denote this space $\cH_\pi$ with dimension $d_\pi=\dim \cH_\pi$.
We may equip $\cH_\pi$ of an inner product $(\cdot, \cdot)_{\cH_\pi}$ for which $\pi$ is unitary.
The coefficients of $\pi$ are any function of the form 
$x\mapsto (\pi(x)u,v)_{\cH_\pi}$ for $u,v\in \cH_\pi$;
these are smooth functions on $G$.
If a basis $\{e_1,\ldots, e_{d_\pi}\}$ of $\cH_\pi$ is fixed, 
then we may identify $\cH_\pi$ with $\bC^{d_\pi}$, 
and the matrix coefficients of $\pi$
are the coefficients $\pi_{i,j}$, $1\leq i,j\leq d_\pi$ given by 
$\pi_{i,j}(x)=(\pi(x)e_i,e_j)_{\cH_\pi}$.

We will often identify a representation of $G$ and its class of equivalence.

We denote by 
$\RepG$ and $\Gh$
 the sets of representations of $G$
 and of irreducible representations of $G$
respectively, both 
 modulo equivalence.

\subsubsection{Lie algebra and vector fields}
The Lie algebra $\fg$ is the tangent space of $G$ at 
the neutral element $e_G$.
We denote by $\exp_G:\fg\to G$ the exponential mapping.

We may identify the Lie algebra $\fg$ 
with  the space of left-invariant vector fields on $G$.
If an basis $\{X_1,\ldots,X_n\}$ for $\fg$ is fixed,
we set for any multi-index $\alpha=(\alpha_1,\ldots,\alpha_n)\in \bN_0^n$
$$
X^\alpha:= X_1^{\alpha_1}\ldots X_n^{\alpha_n}.
$$

If $\pi$ is a representation of the group $G$, then 
$$
\pi(X)=\frac{\partial}{\partial t} \pi(\exp_G(t X))|_{t=0}, \qquad X\in \fg,
$$
defines a representation also denoted $\pi$ of $\fg$ and therefore of its universal enveloping Lie algebra.

The Lie algebra $\fg$ is the direct sum of the semi-simple Lie algebra $\fg_{ss}=[\fg,\fg]$ with its centre $\fz$:
$$
\fg=\fg_{ss}\oplus \fz.
$$
Note that $\fz$ is also the Lie algebra of the centre $Z_G$ of the group $G$.
The Killing form is a scalar inner product on 
 $\fg_{ss}\subset \fg$ and we extend it into a scalar inner product $(\cdot,\cdot)_{\fg}$ on $\fg$ which is invariant under the adjoint action of $G$.

The Casimir element of the universal enveloping algebra is 
$X_1^2+\ldots+X_n^2$ for one and then any orthonormal basis $X_1,\ldots,X_n$ of $\fg$.
Up to a sign, the corresponding differential operator is the (positive) Laplace-Beltrami operator of the compact Lie group $G$:
$$
\cL_G:= -X_1^2-\ldots- X_n^2.
$$
The Laplace-Beltrami operator $\cL_G$ is a non-negative essentially self-adjoint operator
on $L^2(G)$.
It is a central operator (i.e. invariant under left and right translations) and, for any irreducible representation $\pi$ of $G$, $\pi(\cL)$ is scalar:
$$
\pi(\cL_G(\pi)) =\lambda_\pi\id_{\cH_\pi}, \quad \pi\in \Gh.
$$

\subsubsection{Group Fourier transform}
If $f\in \cD'(G)$ is a distribution and $\pi$ is a unitary representation, 
we can always define its \emph{group Fourier transform} at $\pi$
denoted by
$$
\pi(f)\equiv \widehat f(\pi)\equiv \cF_G f(\pi) \in \sL(\cH_\pi), \quad \pi\in \Gh,
$$
 via
$$
 \pi(f)  = \int_G f(x)\pi(x)^* dx,
\quad \mbox{i.e.}\quad
(\pi(f)u,v)_{\cH_\pi} = \int_G f(x)(u,\pi(x)v)_{\cH_\pi} dx,
$$
since the coefficient functions are smooth.
In this paper, the Haar measure $dx$ on $G$ is normalised to be a probability measure.

We denote by $\L2f$ 
the set 
formed by the finite linear combinations of  coefficients of representations $\pi\in \Gh$.
Hence $\L2f$ is a vector subspace of  $\cD(G)$.
The Peter-Weyl Theorem states that $\L2f$ is dense in $L^2(G)$ and that 
for any $\pi,\pi'\in \Gh$, $u,v\in \cH_\pi$ we have
$$
\cF_G \{(\pi (\cdot) u,v)_{\cH_\pi}\}(\pi')
=
\left\{\begin{array}{ll}
d_\pi ^{-1} (u,v)_{\cH_\pi}&\mbox{if} \ \pi'=\pi,
\\
0	&\mbox{otherwise}
\end{array}\right.
$$
Consequently, a function $f\in \cD(G)$ admits the following expansion in Fourier series:
$$
f(x) = \sum_{\pi\in \Gh} d_\pi \tr \left(\pi(x) \widehat f(\pi)\right)
=\sum_{\pi\in \Gh} d_\pi
\sum_{1\leq i,j\leq d_\pi}
\pi_{i,j}(x) [\widehat f(\pi)]_{j,i}.
$$ 
Furthermore the Peter-Weyl Theorem
yields an explicit spectral decomposition for $\cL_G$.

\subsection{The H\"ormander pseudo-differential  calculus on $G$} 
\label{subsec_psiG}

In this section we discuss the H\"ormander pseudo-differential  calculus on $G$.
It may be viewed as a generalisation of the symbols and operator defined
on tori in Section \ref{sec_T}.

\subsubsection{The symbols and the natural quantisation}
The natural quantisation and notion of symbols on Lie groups
was first mentioned by Michael Taylor \cite{taylor_bk86}.

On a (connected) compact Lie group $G$, 
a symbol is a field 
$\sigma=\{\sigma(x,\pi) \in \sL(\cH_\pi), (x,\pi)\in G\times \Gh\}$
(for a general definition of fields of operators, see e.g. \cite[Part II Ch 2]{dixmierVnA}).
Its associated operator is the operator $\Op_G(\sigma)$ defined on $\L2f$ via
$$
\Op_G(\sigma) \phi (x)=\sum_{\pi\in \Gh}
d_\pi \tr \left(\pi(x) \sigma(x,\pi) \widehat \phi(\pi)\right),
\quad
\phi\in \L2f, \ x\in G.
$$

The Peter-Weyl theorem implies that for $\sigma(x,\pi)=\id_{\cH_\pi}$ the corresponding operator is the identity. Furthermore,
if $T$ is a linear operator defined on $\L2f$
(and with image some complex-valued functions of $x\in G$), 
then one recovers the symbol via
$$
\sigma(x,\pi)=
\pi (x)^* (T \pi)(x), 
\quad \mbox{that is,}\quad
[\sigma(x,\pi)]_{i,j}=
\sum_{k} \overline{\pi_{ki} (x)} (T \pi_{kj})(x) ,
$$
when one has fixed a matrix realisation of $\pi$.
This shows that the quantisation $\Op_G$ defined above is injective.

\subsubsection{Difference operators}
Here we recall the two notions  of difference operators, 
the RT-ones introduced by Michael Ruzhansky and Ville Turunen \cite{ruzhansky+turunen_bk}
and the fundamental ones introduced by the author in \cite{monJFA}.

If $q\in \cD(G)$, 
then the corresponding RT-difference operator
$\Delta_q$ is defined as acting on Fourier coefficients i.e. $\cF_G(\cD'(G))$ via
$$
\Delta_q \widehat f = \cF_G \{q f\},
\quad f\in \cD'(G).
$$ 
A collection 
$\Delta = \{\Delta_1,\ldots,\Delta_{n_{\Delta}}\}= \{\Delta_{q_1},\ldots,\Delta_{n_\Delta}\}$
 of RT-difference operators 
is strongly admissible when 
$$
\rank (\nabla_{e_G} q_1, \ldots, \nabla_{e_G} q_{n_\Delta})=n 
\quad\mbox{and}\quad
\{e_G\}=\cap_{j=1}^{n_\Delta} \{x\in G: q_j(x) =0\} .
$$
Such a collection exists, see \cite{ruzhansky+turunen+wirth} and \cite{monJFA}.
If $\alpha\in \bN_0^{n_\Delta}$, we write 
$\Delta^\alpha=\Delta_1^{\alpha_1}\ldots \Delta_{n_\Delta}^{\alpha_{n_\Delta}}$.
Note that the RT-difference operators are defined to act not on all the symbols but only on $\cF_G(\cD'(G))$. 

Let us now recall the definition of the fundamental difference operators as introduced by the authors in \cite{monJFA}, see also \cite{monarxiv}.
A symbol $\sigma=\{\sigma(x,\pi)\in \sL(\cH_\pi): (x,\pi)\in G\times \Gh\}$ 
is a field over the set $G\times \Gh$ and the Hilbert space $\oplus_{\pi\in \Gh}\cH_\pi$ extends naturally to 
a field over $G\times \RepG$ via
$$
\sigma(x,\pi_1\oplus \pi_2)
= 
\sigma(x,\pi_1)
\oplus
\sigma(x,\pi_2)
\in \sL(\cH_{\pi_1} \oplus \cH_{\pi_2}).
$$
Conversely, fields over $G\times \RepG$ satisfying such relations defines a symbol.
The difference operator with respect to ${\pi_0}\in \RepG$ is defined via
$$
\Delta_{\pi_0} \sigma (x,\pi) := \sigma(x,{\pi_0}\otimes \pi) - 
\sigma(x,\id_{\cH_{\pi_0}}\otimes \pi),
\qquad x\in G, \pi\in \RepG.
$$
This defines a field over the set $G\times \Gh$ and the Hilbert space $\oplus_{\pi\in \Gh}\cH_\pi\otimes \cH_{\pi_0}$.
We also extend this definition to any field over the set $G\times \Gh$ and the Hilbert space $\oplus_{\pi\in \Gh}\cH_\pi\otimes \cH_\varphi$ for any $\varphi\in \RepG$.
This allows us to compose difference operators.
We fix a finite set $\{\varphi_1,\ldots,\varphi_f\}\subset \Gh$ of representations of $G$
such that any representation in $\Gh$ will occur in a tensor products of representations in $\FundG$.
This exists \cite{monarxiv}, and we call these representations $\varphi_1,\ldots,\varphi_f$ and the corresponding difference operators $\Delta_{\varphi_1},\ldots,\Delta_{\varphi_f}$ fundamental.
If $\alpha\in \bN_0^f$, we write 
$\Delta^\alpha=\Delta_{\varphi_1}^{\alpha_1}\ldots\Delta_{\varphi_f}^{\alpha_f}$.

\subsubsection{Pseudo-differential  calculus on $G$}

The next statement characterises the global symbol of an operator in the H\"ormander pseudo-differential  calculus on $G$:
\begin{theorem}
\label{thm:PsiG}
\begin{enumerate}
\item Let $A\in \Psi^m(G)$ for some $m\in \bR$.
Then there exists a unique symbol $\sigma_A$ such that $A=\Op_G(\sigma_A)$.
	\begin{itemize}
\item 
For any basis $X_1,\ldots,X_n$ of $\fg$ and for any strongly admissible family of RT-difference operators $\Delta$, 
the symbol $\sigma_A$ satisfies
\begin{align}
\label{eq:SmG-RT}	
\forall\alpha\in \bN_0^{n_\Delta},\ \beta\in \bN_0^n\quad
\exists C>0 \quad
\forall (x,\pi)\in G\times \Gh\\
\|X^\beta\Delta^\alpha \sigma_A(x,\pi)\|_{\sL(\cH_\pi)}
\leq C \langle \lambda_\pi\rangle^{\frac{m-|\alpha|}2}.
\nonumber
\end{align}
\item 
The symbol $\sigma_A$ satisfies
\begin{align}
\label{eq:SmG}	
\forall \alpha \in \bN_0^f,\ \beta\in \bN_0^n\quad
\exists C>0 \qquad
\forall (x,\pi)\in G\times \Gh\\
\|X^\beta\Delta^\alpha \sigma_A(x,\pi)\|_{\sL}
\leq C \langle \lambda_\pi\rangle^{\frac{m-a}2}.\nonumber
\end{align}
Here $X_1,\ldots,X_n$ denotes a basis of $\fg$
and we have fixed fundamental representations $\varphi_1,\ldots,\varphi_f$.
\end{itemize}
\item Conversely, the following holds:
\begin{itemize}
\item 
If a symbol $\sigma$ satisfies \eqref{eq:SmG-RT}
for a basis $X_1,\ldots,X_n$ of $\fg$ and for a strongly admissible family of RT-difference operators $\Delta$, 
 then $\Op_{G}(\sigma)\in \Psi^m(G)$
 and it satisfies \eqref{eq:SmG-RT} for any basis $X_1,\ldots,X_n$ of $\fg$ and for any strongly admissible family of RT-difference operators $\Delta$.
\item 
If a symbol $\sigma$ satisfies \eqref{eq:SmG-RT} for a basis $X_1,\ldots,X_n$ of $\fg$ and for a set fundamental representations $\{\varphi_1,\ldots,\varphi_f\}$ of fundamental representations
 then $\Op_{G}(\sigma)\in \Psi^m(G)$
 and it satisfies \eqref{eq:SmG} for any basis of $\fg$ and any set of fundamental representations. 
\end{itemize}
\end{enumerate}
\end{theorem}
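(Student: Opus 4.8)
The plan is to reduce the characterization of the H\"ormander calculus on $G$ to the known local theory by transferring the analysis to coordinate charts, exactly as was done on the torus in Proposition~\ref{prop_thm:PsiclT}, but with the extra bookkeeping forced by the representation-theoretic nature of the global symbol. The two difference-operator formulations in \eqref{eq:SmG-RT} and \eqref{eq:SmG} are known to be equivalent once one fixes one strongly admissible RT-family and one set of fundamental representations (this is essentially the content of \cite{monJFA,ruzhansky+turunen+wirth,monarxiv}), so I would first dispose of that point by citation and thereafter work with whichever form is more convenient, say the RT-form \eqref{eq:SmG-RT}, treating the equivalence as a black box.

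For the direction $A\in\Psi^m(G)\implies\sigma_A$ satisfies the estimates: existence and uniqueness of $\sigma_A$ with $A=\Op_G(\sigma_A)$ follow from the injectivity of $\Op_G$ and the inversion formula $\sigma(x,\pi)=\pi(x)^*(A\pi)(x)$ already recorded in Section~\ref{subsec_psiG}. To get the decay in $\langle\lambda_\pi\rangle$, I would compare $\Op_G(\sigma_A)$ with the localized operators $\psi_k\,\tau$-conjugates as in the toroidal proof: choose a cover $(\Omega_k)$ by charts centred at points $x_k$, a subordinate partition of unity $\chi_k(x)=\psi_k(x x_k^{-1})$ (using group translation in place of $\tau_{x_0}$), write $\chi_{k_1}A\chi_{k_2}$, note it is smoothing off the diagonal block, and on the diagonal block transfer $\psi_k\tau_{x_k}^{-1}A\tau_{x_k}\psi_k$ to a compactly-supported operator $A_k\in\Psi^m(\bR^n)$ via the exponential chart. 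The local symbol estimates $|\partial_x^\beta\partial_\xi^\alpha a_k(x,\xi)|\lesssim\langle\xi\rangle^{m-|\alpha|}$ then have to be pushed back to estimates on $\sigma_A(x,\pi)$; here the key analytic input is the relation between the Laplace–Beltrami eigenvalue $\lambda_\pi$ and the size of $\xi$ on the chart (so that $\langle\xi\rangle\sim\langle\lambda_\pi\rangle^{1/2}$ under the identification coming from $\exp_G$ and the highest-weight/Weyl-dimension asymptotics), and the fact that the RT-difference operator $\Delta^\alpha$ applied to $\sigma_A$ corresponds, on the chart, to multiplication of the kernel by $q^\alpha$ and hence to a $\partial_\xi^\alpha$-type gain once $q_j$ vanishes to first order at $e_G$. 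This last translation is where I would spend most of the work: one must show that $\Delta^\alpha$ on the global side and $\partial_\xi^\alpha$ on the chart side match up to lower-order errors, uniformly in $\pi$, which requires a Taylor expansion of $q_j\circ\exp_G$ near $0$ together with the strong admissibility condition $\rank(\nabla_{e_G}q_1,\dots,\nabla_{e_G}q_{n_\Delta})=n$.

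For the converse, $\sigma$ satisfying \eqref{eq:SmG-RT}$\implies\Op_G(\sigma)\in\Psi^m(G)$, I would run the same chart argument in reverse: it suffices to show each localized piece $\psi_k\,\Op_G(\sigma)\,\psi_k$, transported to $\bR^n$ via $\exp_G$ and the translation $\tau_{x_k}$, lies in $\Psi^m(\bR^n)$, and that the off-diagonal pieces $\chi_{k_1}\Op_G(\sigma)\chi_{k_2}$ are smoothing. The off-diagonal smoothing is a consequence of the rapid decay of the convolution kernel $\kappa_{\sigma,x}$ away from $e_G$, which itself follows from the $\Delta^\alpha$-estimates (applying many difference operators improves kernel decay, just as many $\partial_\xi$'s do on $\bR^n$). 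For the diagonal piece I would expand the group Fourier kernel in the chart using the matrix coefficients $\pi_{ij}(\exp_G y)$, Taylor expand them in $y$, and recognize the resulting oscillatory integral as an $\bR^n$-pseudodifferential operator whose symbol estimates are inherited from those of $\sigma$ via the $\langle\lambda_\pi\rangle^{1/2}\sim\langle\xi\rangle$ dictionary; one also invokes the Peter–Weyl/Parseval identity (the analogue of the Poisson-summation step in the toroidal proof) to pass between sums over $\Gh$ and integrals over the chart. The main obstacle throughout is the non-commutativity: unlike the torus, where $\widehat\kappa_{A,x}(\ell)=\sigma_A(x,\ell)$ is scalar and Poisson summation is immediate, here $\sigma_A(x,\pi)$ is a $d_\pi\times d_\pi$ matrix, the tensor-product structure underlying $\Delta_{\pi_0}$ is genuinely representation-theoretic, and controlling constants \emph{uniformly in $\pi$} (hence in $d_\pi\to\infty$) requires the highest-weight machinery and the summability estimates $\sum_{\pi\in\Gh}d_\pi^2\langle\lambda_\pi\rangle^{-s}<\infty$ for $s$ large. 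I would expect the bulk of the technical effort — and the only genuinely new ideas beyond transcribing the toroidal proof — to be concentrated in establishing the quantitative correspondence between fundamental/RT difference operators on symbols and $\xi$-derivatives on chart symbols, uniformly in the representation.
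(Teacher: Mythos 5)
The paper does not prove Theorem~\ref{thm:PsiG}: immediately after its statement the text says it ``is shown in \cite{monJFA}'' and notes that Part~(1) was also stated in \cite{ruzhansky+turunen_bk,ruzhansky+turunen+wirth}, ``unfortunately with proofs relying on properties of the pseudo-differential calculus (e.g.\ composition or properties equivalent to the statement above) which had not been shown in these works.'' So there is no in-paper proof to compare against; the theorem is imported by citation.

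Your sketch is essentially the chart-transfer strategy of Ruzhansky--Turunen--Wirth: localize, transport via $\exp_G$ to $\bR^n$, and argue that RT/fundamental difference operators on the global symbol correspond to $\partial_\xi$ on the chart symbol, uniformly in $\pi$, under the dictionary $\langle\xi\rangle\sim\langle\lambda_\pi\rangle^{1/2}$. That is exactly the approach the paper flags as having had a circular dependency: establishing the converse direction (global symbol estimates $\Rightarrow$ $\Op_G(\sigma)\in\Psi^m(G)$) and establishing that $\Op_G(S^m(G))$ is closed under composition and adjoints are logically intertwined in that line of argument, and the RT works used one to justify the other. You correctly identify the quantitative $\Delta^\alpha\leftrightarrow\partial_\xi^\alpha$ correspondence as ``the bulk of the technical effort,'' but that is precisely the step where the RT approach ran into difficulty, and your sketch does not show how to break the circle. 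The proof the paper actually relies on, in \cite{monJFA}, is genuinely different: it develops the calculus intrinsically on the group --- proving kernel decay, composition, adjoint, parametrix existence, and Sobolev boundedness directly from the difference-operator estimates, without passing through local charts --- and only afterwards shows that the intrinsic class coincides with the Hörmander class. That intrinsic scaffolding is what makes the converse direction non-circular. As a proof proposal, then, your plan is a plausible \emph{roadmap} but not a route the paper endorses; if you want to follow the paper's references, the intrinsic program of \cite{monJFA} (with \cite{monarxiv} for the equivalence of the RT and fundamental difference-operator formulations) is the intended one, and it substitutes an internal calculus-building step for your ``push the chart estimates back to $\sigma_A$'' step.

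Two smaller remarks. First, your plan to ``dispose of'' the equivalence of \eqref{eq:SmG-RT} and \eqref{eq:SmG} by citation is fine in spirit --- that equivalence is part of what \cite{monJFA,monarxiv} establish --- but note that in the converse direction the theorem actually asserts more than equivalence at a single choice: it asserts \emph{any} basis and \emph{any} admissible family work once one does, and this stability is itself part of what the intrinsic calculus delivers. Second, the forward direction (Part~1) of your sketch is sound: existence and uniqueness of $\sigma_A$ from injectivity of $\Op_G$ and the inversion $\sigma_A(x,\pi)=\pi(x)^*(A\pi)(x)$, plus localization and kernel regularity, do give the symbol estimates; the delicate part is entirely in Part~(2).
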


Theorem \ref{thm:PsiG} is shown in \cite{monJFA}.
Part (1) is also stated in 
\cite{ruzhansky+turunen_bk,ruzhansky+turunen+wirth},
unfortunately with proofs relying on properties of the pseudo-differential calculus (e.g. composition or properties equivalent to the statement above) which had not been shown in these works.

We denote by $S^m(G)$ the Fr\'echet space of symbols satisfying 
\eqref{eq:SmG} or equivalently \eqref{eq:SmG-RT}, and we say then that the symbols are of order $m$.
This extends the notation for the toroidal case $G=\bT^n$ viewed in Section \ref{subsec_psiT}.
As in the case of the torus, 
 Theorem \ref{thm:PsiG}  may then be rephrased as 
$\Psi^m(G)=\Op_{G}(S^m(G))$.
Furthermore, the proof of Theorem \ref{thm:PsiT} in \cite{monJFA} shows that 
the map $\sigma \mapsto \Op_{G}(\sigma)$ 
is an isomorphism of Fr\'echet spaces from $S^m(G)$ to $\Psi^m(G)$. 
This is so for any $m\in \bR$, and also for $m=-\infty$
having denoted by $S^{-\infty}(G)=\cap_{m\in \bR} S^m(G)$ the set of smoothing  symbols.

As on $\bR^n$ or $\bT^n$,
we say that the symbol $\sigma\in S^m(G)$ admits an expansion and we write $\sigma\sim \sum_{j\in \bN_0} \sigma_{m-j}$ when $\sigma_{m-j}\in S^{m-j}(G)$ and 
$\sigma-\sum_{j=0}^{m_N} \sigma_{m_j} \in S^{m-{N+1}}(G)$.

\subsubsection{Kernels}

As on the torus,
another way of defining globally an operator $A\in \cup_{m\in \bR} \Psi^m(G)$
is via its integral kernel $K_A \in \cD'(G\times G)$
or equivalently via the distribution given by
$k_{A,x}(y)=K_A(x,xz^{-1})$.
Indeed, we have (in the sense of distributions)
for any $f\in C^\infty(G)$ and $x\in G$:
$$
Af(x)
=\int_{G} K_A(x,y) f(y) dy
=\int_{G}  f(y) \kappa_{A,x}(y^{-1}x) dy
=f*\kappa_{A,x}(x).
$$
Recall that the map
$x\mapsto \kappa_{A,x}\in \cD'(G)$ is smooth  on $G$
and that we have 
$$
\forall (x,\pi)\in G\times \Gh\qquad
\widehat \kappa_{A,x}(\ell)=\sigma_A (x,\pi)
\qquad(\mbox{where} \ 
\sigma_A:=\Op_{G}^{-1}(A)).
$$
Moreover, $\kappa_{A,x}$ is smooth away from the origin for $x$ fixed
since $K_A$ is smooth away from the diagonal $x=y$.
In fact, an operator $A:\cD(G)\to \cD'(G)$ is in $\Psi^{-\infty}(G)$
if and only if $(x,y)\mapsto \kappa_x(y)$ is smooth on $G\times G$.

If $\sigma_A$ does not depend on $x$, then $A$ is a group Fourier multiplier with symbol $\sigma$ and convolution kernel $\kappa_A$. Even when $\sigma_A$ depends on $x$, we may abuse the vocabulary and call $\kappa_{A,x}$ the convolution kernel of $A$.

\begin{lemma}
\label{lem_tr_PsimG<-n} 
Let $A=\Op_G(\sigma_A)\in \Psi^{m}(G)$ with $m<-n$. Then 
$\sum_{\pi\in \Gh} d_\pi |\tr \int_G\sigma(x,\pi) dx|$ is finite and $A$ is trace-class with 
$$
\tr (A)=\int_G K_A(x,x)dx = \int_G \kappa_x(e_G) dx
=\sum_{\pi\in \Gh} d_\pi \tr \int_G\sigma(x,\pi) dx.
$$
\end{lemma}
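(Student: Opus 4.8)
The plan is to mirror the Euclidean Lemma~\ref{lem_tr_Psim<-n} using the group Fourier transform in place of the Euclidean one, together with the symbol estimates from Theorem~\ref{thm:PsiG}. First I would show that the convolution kernel $\kappa_{A,x}$ is continuous at the origin (indeed on all of $G$) when $m<-n$: since $\widehat{\kappa_{A,x}}(\pi)=\sigma_A(x,\pi)$ and the symbol estimate \eqref{eq:SmG} with $\alpha=\beta=0$ gives $\|\sigma_A(x,\pi)\|_{\sL(\cH_\pi)}\le C\langle\lambda_\pi\rangle^{m/2}$, the Fourier series
$$
\kappa_{A,x}(z)=\sum_{\pi\in\Gh} d_\pi\,\tr\!\left(\pi(z)\,\sigma_A(x,\pi)\right)
$$
converges absolutely and uniformly in $(x,z)$, because $|\tr(\pi(z)\sigma_A(x,\pi))|\le d_\pi\|\sigma_A(x,\pi)\|_{\sL(\cH_\pi)}$ and $\sum_{\pi\in\Gh} d_\pi^2\langle\lambda_\pi\rangle^{m/2}<\infty$ for $m<-n$ by Weyl's law (equivalently, the fact that $\cL_G^{-s}$ is trace-class for $2s>n$, which follows from the Peter--Weyl spectral decomposition and the eigenvalue counting $\sum_{\pi} d_\pi^2\langle\lambda_\pi\rangle^{-s}<\infty$). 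This also shows $x\mapsto\kappa_{A,x}(e_G)$ is continuous, hence $K_A(x,x)=\kappa_{A,x}(e_G)$ is continuous on $G$, and $\sum_\pi d_\pi|\tr\int_G\sigma_A(x,\pi)\,dx|\le\sum_\pi d_\pi^2\sup_x\|\sigma_A(x,\pi)\|_{\sL(\cH_\pi)}<\infty$.

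Next I would establish that $A$ is trace-class. The cleanest route is to factor through $\cL_G$: write $A=(A\cL_G^{N})\circ\cL_G^{-N}$ for a suitable $N$, or more directly use that $A=\Op_G(\sigma_A)$ with $m<-n$ can be written as $B\,\cL_G^{-s}$ where $2s$ is slightly less than $-m$ so that $\cL_G^{-s}$ is Hilbert--Schmidt and $B=\Op_G(\sigma_A)\cL_G^{s}\in\Psi^{m+2s}(G)$ with $m+2s<0$, hence also bounded on $L^2(G)$ --- actually to get trace-class it is simplest to split $-m=s_1+s_2$ with $s_1,s_2>n/2$, write $A=(A\cL_G^{s_1/1}\cdots)$; more precisely, since the Hilbert--Schmidt operators $\cL_G^{-s/2}$ satisfy $\|\cL_G^{-s/2}\|_{HS}^2=\sum_\pi d_\pi^2\langle\lambda_\pi\rangle^{-s}<\infty$ for $s>n$, choosing $s\in(n,-m)$ we factor $A=(A\cL_G^{s/2})\cdot\cL_G^{-s/2}$ with $A\cL_G^{s/2}\in\Psi^{m+s}(G)$, $m+s<0$, hence bounded on $L^2(G)$, while $\cL_G^{-s/2}$ is Hilbert--Schmidt; writing $A=(A\cL_G^{s/2})\cL_G^{-s/4}\cdot\cL_G^{-s/4}$ as a product of two Hilbert--Schmidt operators shows $A$ is trace-class. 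Alternatively one invokes the general principle that $\Psi^m(G)\subset\Psi^m(M)$ with $M=G$ as a manifold, so Lemma~\ref{lem_tr_Psim<-n} already gives trace-class; I would state this as the quick argument and the factorisation as the self-contained one.

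Finally I would compute the trace. Since $A$ is trace-class with continuous kernel $K_A$, the standard fact (available from Lemma~\ref{lem_tr_Psim<-n} viewing $G$ as a manifold, or proved directly from an eigenbasis of $\cL_G$ via Peter--Weyl) gives $\tr(A)=\int_G K_A(x,x)\,dx=\int_G\kappa_{A,x}(e_G)\,dx$. Then I substitute the absolutely convergent Fourier expansion of $\kappa_{A,x}$ at $z=e_G$, namely $\kappa_{A,x}(e_G)=\sum_{\pi\in\Gh} d_\pi\,\tr(\sigma_A(x,\pi))$ (using $\pi(e_G)=\id_{\cH_\pi}$), and interchange the sum with $\int_G\,dx$, which is justified by the absolute/uniform convergence established in the first step; this yields
$$
\tr(A)=\int_G\sum_{\pi\in\Gh} d_\pi\,\tr(\sigma_A(x,\pi))\,dx
=\sum_{\pi\in\Gh} d_\pi\,\tr\!\int_G\sigma_A(x,\pi)\,dx.
$$
The main obstacle is the quantitative input $\sum_{\pi\in\Gh} d_\pi^2\langle\lambda_\pi\rangle^{m/2}<\infty$ for $m<-n$: everything hinges on this Weyl-type estimate, which I would either cite from the literature on the spectrum of $\cL_G$ (the heat trace $\sum_\pi d_\pi^2 e^{-t\lambda_\pi}\sim c\,t^{-n/2}$) or derive from the fact that $\cL_G$ is an elliptic second-order operator on the $n$-manifold $G$ together with the classical Weyl law; once that is in hand the rest is routine bookkeeping with absolutely convergent series.
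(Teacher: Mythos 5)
Your argument is correct and is essentially the paper's: both rest on the continuity and absolute summability supplied by the symbol bound $\|\sigma_A(x,\pi)\|_{\sL(\cH_\pi)}\lesssim\langle\lambda_\pi\rangle^{m/2}$ together with the summability $\sum_{\pi\in\Gh} d_\pi^2\langle\lambda_\pi\rangle^{-s}<\infty$ for $s>n/2$, followed by the manifold trace formula $\tr(A)=\int_G K_A(x,x)\,dx$ and Fourier inversion. The only difference is cosmetic: the paper obtains the summability by identifying $\sum_\pi d_\pi^2(1+\lambda_\pi)^{m/2}$ with $\|\cB_{-m/2}\|_{L^2(G)}^2$, where $\cB_s$ is the convolution kernel of $(\id+\cL)^{-s/2}$, citing \cite[Lemma A5]{monJFA}, whereas you invoke Weyl's law / the heat-trace asymptotics --- the same fact in different clothing.
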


\begin{proof}
The first equality holds on any compact manifold and any operator of order  $m<-n$.
As $m<-n$, $\kappa_x$ is continuous on $G$ so 
$\int_G K_A(x,x)dx=\int_G \kappa_x(e_G) dx$.

As $\sigma\in S^m(G)$, we have
$$
|\tr \int_G\sigma(x,\pi) dx|\leq \sup_{(x',\pi')\in G\times\Gh} \|\sigma(x',\pi')\|(1+\lambda_{\pi'})^{-\frac m2} \ 
\tr \left((1+\lambda_\pi)^{-\frac m2} \id_{\cH_\pi}\right).
$$
Recall that the convolution kernel $\cB_s$ of the operator $(\id+\cL)^{-\frac s2}$ is square integrable for $s>n/2$ \cite[Lemma A5]{monJFA} and that 
$$
\|\cB_s\|_{L^2(G)}^2 
= \sum_{\pi\in \Gh} d_\pi 
\| (1+\lambda_\pi)^{-\frac s2} \id_{\cH_\pi}\|_{HS}^2
= \sum_{\pi\in \Gh} d_\pi 
\tr (1+\lambda_\pi)^{-s} \id_{\cH_\pi}.
$$
Therefore, as $m<-n$, we have
\begin{align*}
\sum_{\pi\in \Gh} d_\pi |\tr \int_G\sigma(x,\pi) dx|
\leq \sup_{(x',\pi')\in G\times\Gh} \|\sigma(x',\pi')\|(1+\lambda_{\pi'})^{-\frac m2} 
\sum_{\pi\in \Gh} d_\pi \tr \left((1+\lambda_\pi)^{-\frac m2} \id_{\cH_\pi}\right)
\\=\sup_{(x',\pi')\in G\times\Gh} \|\sigma(x',\pi')\|(1+\lambda_{\pi'})^{-\frac m2}  \ \|\cB_{-m/2}\|^2,
\end{align*}
and the sum on the right-hand side is finite.
Moreover, 
since $\int_G\sigma(x,\pi)dx=\cF_G\{\int_G \kappa_x dx \}(\pi)$, the Fourier inversion formula yields:
$$
\int_G \kappa_x(e_G) dx=
\sum_{\pi\in \Gh} d_\pi \tr \int_G\sigma(x,\pi) dx.
$$
\end{proof}

\subsubsection{Invariance of the calculi under translations}
\label{subsec_invariance}

For any operator $A=\Op_G(\sigma_A)$ and $x_0\in G$,
we denote by ${}_{x_0}A$ and $A_{x_0}$ the left and right translated of $A$, that is, the operators given by
$$
{}_{x_0}A(f) (x)=A(f(x_0^{-1}\cdot) ) (x_0x)
\quad\mbox{and}\quad
A_{x_0} (f)(x) = A (f(\cdot \, x_0^{-1}) ) (x x_0).
$$
We check easily that 
the symbols of ${}_{x_0}A$ and $A_{x_0}$ are given by respectively:
$$
\sigma_{{}_{x_0}A} (x,\pi) = \sigma_A(x_0x,\pi)
\quad\mbox{and}\quad
\sigma_{A_{x_0}}(x,\pi) = \pi(x_0)\sigma_A(xx_0,\pi)\pi(x_0)^{-1}.
$$

It follows readily from the definition of the symbol classes (see the conditions in \eqref{eq:SmG}) that for any $x_0\in G$ we have
$$
\sigma_A \in S^m(G) \Longrightarrow 
\sigma_{{}_{x_0}A} \ \mbox{and}\ \sigma_{A_{x_0}} \ \mbox{are in} \ S^m(G).
$$
Furthermore, the maps 
$\sigma_A \mapsto \sigma_{{}_{x_0}A}$
and
$\sigma_A \mapsto \sigma_{A_{x_0}}$
are continuous isomorphisms of the Fr\'echet space $S^m(G)$.
Consequently, $\Psi^m(G)$ is invariant under left or right translations in the sense that for any $x_0\in G$ we have:
$$
A\in \Psi^m(G)
\Longrightarrow 
{}_{x_0}A \ \mbox{and}\ A_{x_0}\ \mbox{are in} \ \Psi^m(G).
$$
Furthermore, we can integrate with respect to $x_0\in G$:
$$
A\in \Psi^m(G)
\Longrightarrow 
\int_G {}_{x_0}A \, dx_0 \ \mbox{and}\ \int_G A_{x_0} dx_0\ \mbox{are in} \ \Psi^m(G).
$$

Let us prove a similar result for the classical calculus:
\begin{proposition}
\label{prop_transPsiclG}
For any $A\in \Psi^m_{cl}(G)$	and $x_0\in G$, 
the operators ${}_{x_0}A$, $A_{x_0}$, $\int_G {}_{x_0}A \, dx_0$ and $\int_G A_{x_0} dx_0$ are in $\Psi^m_{cl}(G)$.
\end{proposition}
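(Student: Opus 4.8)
The plan is to split the four operators into two groups. The single translates ${}_{x_0}A$ and $A_{x_0}$ I would dispatch by the diffeomorphism invariance of the classical calculus, and the two averages $\int_G{}_{x_0}A\,dx_0$ and $\int_GA_{x_0}\,dx_0$ I would obtain as Bochner integrals in the Fr\'echet space $\Psi^m_{cl}(G)$. For the first group, write $L_{x_0}$ for the pullback operator $f\mapsto f(x_0^{-1}\,\cdot)$ attached to the left translation $y\mapsto x_0^{-1}y$ of $G$, and $R_{x_0}$ for $f\mapsto f(\,\cdot\,x_0^{-1})$, attached to $y\mapsto yx_0^{-1}$. Unwinding the definitions of Section~\ref{subsec_invariance} gives ${}_{x_0}A=L_{x_0}^{-1}AL_{x_0}$ and $A_{x_0}=R_{x_0}^{-1}AR_{x_0}$, so ${}_{x_0}A$ and $A_{x_0}$ are conjugates of $A$ by the pullback along a global diffeomorphism of $G$. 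The invariance of $\Psi^{\tilde m}_{cl}$ under diffeomorphisms recalled in Section~\ref{subsec_prel_EPDO}, in its manifold form obtained by covering $G$ with finitely many charts, then gives ${}_{x_0}A,A_{x_0}\in\Psi^m_{cl}(G)$, with the same (possibly complex) order as $A$.

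For the two averages, I would use that $\Psi^m_{cl}(G)$ carries a natural Fr\'echet topology: after fixing a finite atlas $(\Omega_j,F_j)_j$ and a subordinate partition of unity $(\chi_j)_j$, one has $B\in\Psi^m_{cl}(G)$ if and only if each $\chi_kB\chi_j$, read in the coordinates $F_j$ on the source and $F_k$ on the target, belongs to $\Psi^m_{cl}(\bR^n)$ (with symbol compactly supported in $x$), and one takes the induced seminorms. Recall also that $\int_G{}_{x_0}A\,dx_0$ and $\int_GA_{x_0}\,dx_0$ already belong to $\Psi^m(G)$ by Section~\ref{subsec_invariance}; the issue is that they are \emph{classical}. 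The decisive point will be that $x_0\mapsto{}_{x_0}A$ is continuous from $G$ into $\Psi^m_{cl}(G)$, and similarly $x_0\mapsto A_{x_0}$. Granting this, since $G$ is compact with finite Haar measure, the Bochner integral $\int_G{}_{x_0}A\,dx_0$ converges in $\Psi^m_{cl}(G)$ and thus lies in $\Psi^m_{cl}(G)$; and because $\Psi^m_{cl}(G)\hookrightarrow\sL(\cD(G),\cD'(G))$ is continuous, this Bochner integral is the pointwise operator $\int_G{}_{x_0}A\,dx_0$. The same reasoning covers $\int_GA_{x_0}\,dx_0$.

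To prove the continuity of $x_0\mapsto{}_{x_0}A$, I would argue locally. Fix $j,k$. Covering $G$ by sufficiently small open sets $U_\alpha$ with a subordinate partition of unity $(\rho_\alpha)_\alpha$, and refining the atlas and $(\chi_j)_j$ if necessary, one may assume that for $x_0\in U_\alpha$ the translated sets $x_0\supp\chi_k$ and $x_0\supp\chi_j$ lie in single charts $\Omega_{k'},\Omega_{j'}$, and fix cut-offs $\tilde\chi_{k'},\tilde\chi_{j'}$ (independent of $x_0$) equal to $1$ near them. A direct support inspection gives the exact identity
$$
\chi_k\,{}_{x_0}A\,\chi_j=\chi_k\,L_{x_0}^{-1}\,\tilde\chi_{k'}\,A\,\tilde\chi_{j'}\,L_{x_0}\,\chi_j,\qquad x_0\in U_\alpha.
$$
Read in coordinates, the middle factor $\tilde\chi_{k'}A\tilde\chi_{j'}$ becomes a fixed element of $\Psi^m_{cl}(\bR^n)$ with compactly supported symbol (since $A\in\Psi^m_{cl}(G)$), while $\chi_kL_{x_0}^{-1}$ and $L_{x_0}\chi_j$ become multiplication by smooth compactly supported cut-offs composed with pullbacks along the diffeomorphisms $\xi\mapsto F_{k'}\big(x_0\,F_k^{-1}(\xi)\big)$ and $\zeta\mapsto F_j\big(x_0^{-1}\,F_{j'}^{-1}(\zeta)\big)$ of suitable open subsets of $\bR^n$, both smooth in $x_0$ jointly with the spatial variable. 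Thus $\chi_k\,{}_{x_0}A\,\chi_j$ read in coordinates is a fixed compactly supported classical pseudodifferential operator on $\bR^n$ composed on each side with an $x_0$-dependent change of variables and multiplication by cut-offs, everything depending smoothly on $x_0\in U_\alpha$; summing against $(\rho_\alpha)_\alpha$ yields continuity on $G$, in fact smoothness.

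The main obstacle is this last inference. Section~\ref{subsec_prel_EPDO} records the diffeomorphism invariance of $\Psi^{\tilde m}_{cl}$ only qualitatively, whereas here one needs its quantitative counterpart: that the operation $(F,\Op(a))\mapsto F^*\Op(a)$, together with multiplication by cut-offs, is continuous --- indeed smooth --- in the Fr\'echet topologies of the classical symbol classes, uniformly for $F$ ranging over a smooth family of diffeomorphisms. This is classical but genuinely technical, and it is the only non-routine step. A more pedestrian substitute, avoiding the Fr\'echet topology altogether, is to note that the local symbol $b_{x_0}(x,\xi)$ of $\chi_k\,{}_{x_0}A\,\chi_j$ is smooth in $(x,x_0,\xi)$, with its $S^m$-bounds, its poly-homogeneous remainder bounds, and the homogeneity relations of its expansion terms all holding locally uniformly in $x_0$; since homogeneity of degree $\tilde m-l$ in $\xi$ for $|\xi|\ge 1$ and the symbol estimates survive integration over $G$, differentiating under the integral sign shows directly that $\int_Gb_{x_0}\,dx_0$ is a classical symbol of order $m$, with homogeneous components $\int_G(b_{x_0})_{\tilde m-l}\,dx_0$. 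Either way, the local symbols of $\int_G{}_{x_0}A\,dx_0$ and $\int_GA_{x_0}\,dx_0$ are classical, which finishes the proof.
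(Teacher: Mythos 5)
Your proof is correct, and the load-bearing step is the same as the paper's. What actually does the work is the observation that the local symbol of $\chi_k\,{}_{x_0}A\,\chi_j$, read in coordinates, depends smoothly on $x_0$ (locally uniformly together with all its $S^m$-estimates, poly-homogeneous remainder bounds and homogeneity relations), so that differentiating under the integral sign exhibits $\int_G {}_{x_0}A\,dx_0$ as having a classical local symbol with homogeneous components $\int_G (b_{x_0})_{\tilde m - l}\,dx_0$; this is precisely the argument given in the paper. The paper phrases it more tersely by choosing a localization adapted to the group structure -- exponential charts and a partition of unity of the form $\chi_j = \psi_j(\,\cdot\, z_j)$ with a common bump $\psi_j$ -- so that $({}_{x_0}A)_{j,k}$ is literally the localization of $A$ centered at the translated points $x_0^{-1} z_j$. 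That makes both the single-translate case (via the coordinate-independence of $\Psi^m_{cl}(M)$) and the smooth $x_0$-dependence transparent without writing out the chart changes $\chi_k L_{x_0}^{-1}\tilde\chi_{k'}\cdot\tilde\chi_{j'}L_{x_0}\chi_j$ explicitly. Your Bochner-integral framing in a Fr\'echet topology on $\Psi^m_{cl}(G)$ is a clean alternative way to package the conclusion, but you rightly flag that it rests on the continuity (indeed smoothness) of $(F,\Op(a))\mapsto F^*\Op(a)$ in the classical-symbol Fr\'echet topology along a smooth family of diffeomorphisms, a quantitative refinement that the paper neither records nor needs; since your ``pedestrian substitute'' fills that gap and coincides with what the paper actually does, the proof is complete as written.
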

\begin{proof}
We can construct a finite open cover $(\Omega_{j})_{j}$ of $G$ and 
a subordinate partition of unity of the form $\chi_j=\psi_j (\cdot \, z_j)$
for distinct point $z_j$, 
and $\psi_j$ valued in $[0,1]$, identically 1 near $e_G$ and with a small support about $e_G$. 
The exponential mapping
$\exp_G$ is a smooth diffeomorphism from a neighbourhood  of 0 onto a neighbourhood  of $\supp \psi_j$.
For any $A:\cD(G)\to \cD'(G)$, 
we define
$A_{j,k}:\cD(\Omega)\to \cD'(\Omega)$
via 
$$
A_{j,k}(f)\, (x)=\left( \chi_{k }\, A\chi_{j}\left(f ( \cdot\, x_j)\right)\right)\, (x x_k^{-1}).
$$
Then $A:\cD(G)\to \cD'(G)$ is in $\Psi^m(G)$ if and only if all the operators $\phi \mapsto (A_{j,k}(\phi\circ\exp_G^{-1}))\circ \exp_G$ are in $\Psi^m(\cO)$.
And $A\in \Psi^m(G)$ is classical if and only if all the operators $\phi \mapsto (A_{j,k}(\phi\circ \exp_G^{-1}))\circ \exp_G$ are in $\Psi^m_{cl}(\cO)$.

We observe that $({}_{x_0}A)_{j,k}$ coincides with 
$A'_{j,k}$ constructed in a similar fashion, but with the $z_j$ being replaced by $x_0^{-1} z_j$.
The membership of ${}_{x_0}A$ follows readily.
Furthermore, all the operators $({}_{x_0}A)_{j,k}$
have an integral kernel which depends smoothly on $x_0$,
i.e. $G\ni x_0\mapsto K_{ ({}_{x_0}A)_{j,k}} \in \cD'(G\times G)$ is continuous;
therefore the Euclidean symbols of $\phi \mapsto (({}_{x_0}A)_{j,k}(\phi\circ \exp_G^{-1}))\circ \exp_G$ depend smoothly on $x_0$ as well.
Hence, one checks easily that the integration over $G$ also produces a classical symbols.
This implies the membership of $\int_G {}_{x_0}A \, dx_0$ in $\Psi^m_{cl}(G)$.

Similarly, we obtain the $\Psi^m_{cl}(G)$-memberships of $A_{x_0}$ by considering left translated of $\Omega$
and of $\int_G A_{x_0} dx_0$ as above.
 \end{proof}

The proof of Proposition \ref{prop_transPsiclG}
yields readily:

\begin{corollary}
\label{cor_prop_transPsiclG}
Let $A\in \Psi^m_{cl}(G)$	and let $x_0\in G$.

\begin{enumerate}
\item
Denoting by $a_m$ the principal geometric symbol of $A$,
the principal geometric symbols of ${}_{x_0}A$ and $A_{x_0}$ at $x$
are given by $a_{m}(x_0x, L_{x_0}^*\cdot )$ and $a_{m}(xx_0,R_{x_0}^*\cdot)$ 
where $L_{x_0}^*$ and $R_{x_0}^*$
are the pullbacks of the left and right $x_0$-translation mappings.
\item
Translating the operator yields translation of the residue density
$$
\res_{x_1}({}_{x_0}A)
=
\res_{x_0x_1}(A)
\qquad\mbox{and}\qquad
\res_{x_1}(A_{x_0})
=
\res_{x_1x_0}(A),
$$
and of the canonical trace density:
$$
\TR_{x_1}({}_{x_0}A)
=
\TR_{x_0x_1}(A)
\qquad\mbox{and}\qquad
\TR_{x_1}(A_{x_0})
=
\TR_{x_1x_0}(A),
$$
Consequently, the operators $A$, ${}_{x_0}A$, $A_{x_0}$, $\int_G {}_{x_0}A \, dx_0$ and $\int_G A_{x_0} dx_0$
have the same non-commutative residue
and the same canonical trace.
If $m<-n$, they 
have the same trace.
\end{enumerate}
\end{corollary}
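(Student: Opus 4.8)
The plan is to identify ${}_{x_0}A$ and $A_{x_0}$ with conjugates of $A$ by translation diffeomorphisms of $G$, and then to feed this into the functorial behaviour under diffeomorphisms of the principal geometric symbol, of the residue density and of the canonical trace density. First I would check, directly from the definitions of ${}_{x_0}A$ and $A_{x_0}$ (this is also implicit in the local charts used in the proof of Proposition \ref{prop_transPsiclG}), that ${}_{x_0}A = F_L^\ast A$ and $A_{x_0} = F_R^\ast A$, where $F_L$ is the left-translation diffeomorphism $x\mapsto x_0^{-1}x$ and $F_R$ the right-translation diffeomorphism $x\mapsto xx_0^{-1}$ (here $F^\ast A := FAF^{-1}$ as in Section \ref{subsec_prel_EPDO}, transported to $G$). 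Since $G$ is compact it is unimodular, so the Haar measure $dx$ is bi-invariant and the Jacobian factors $|F_L'(x)|$ and $|F_R'(x)|$ are identically $1$.

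Part (1) is then the standard rule for how the principal geometric symbol transforms under a diffeomorphism (Section \ref{subsec_princ_symbol}): for a diffeomorphism $F$ of $G$, the principal symbol of $F^\ast A$ at a point $x$ equals the principal symbol of $A$ at $F^{-1}(x)$ precomposed with the cotangent differential of $F$ at $F^{-1}(x)$. Taking $F=F_L$, resp.\ $F=F_R$, so that $F^{-1}(x)=x_0x$, resp.\ $xx_0$, and identifying the resulting cotangent maps with $L_{x_0}^\ast$, resp.\ $R_{x_0}^\ast$, yields the formulas $a_m(x_0x,L_{x_0}^\ast\,\cdot\,)$ and $a_m(xx_0,R_{x_0}^\ast\,\cdot\,)$.

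For Part (2) I would apply the transformation laws recorded in Sections \ref{subsec_Wod} and \ref{subsec_def_TR}, namely $|F'(x)|\,\res_{F(x)}(F^\ast A)=\res_x(A)$ and $|F'(x)|\,\TR_{F(x)}(F^\ast A)=\TR_x(A)$, with $F=F_L$ and $F=F_R$ and $|F'|\equiv1$; relabelling the base point gives the four displayed identities for $\res_{x_1}$ and $\TR_{x_1}$. Integrating the first over $x_1\in G$ and using left-invariance of Haar yields $\res({}_{x_0}A)=\int_G\res_{x_0x_1}(A)\,dx_1=\int_G\res_{x_1}(A)\,dx_1=\res(A)$, and likewise $\res(A_{x_0})=\res(A)$ and the corresponding statements for $\TR$. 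For the averages, Proposition \ref{prop_transPsiclG} places $\int_G{}_{x_0}A\,dx_0$ and $\int_G A_{x_0}\,dx_0$ in $\Psi^m_{cl}(G)$; since the residue density and the canonical trace density are linear and continuous in the operator while $x_0\mapsto{}_{x_0}A$ depends continuously (indeed smoothly) on $x_0$ with values in $\Psi^m_{cl}(G)$ by the proof of Proposition \ref{prop_transPsiclG}, one may pull these functionals through the $x_0$-integral, so that $\res_{x_1}\!\big(\int_G{}_{x_0}A\,dx_0\big)=\int_G\res_{x_0x_1}(A)\,dx_0=\res(A)$ is constant in $x_1$, whence $\res\!\big(\int_G{}_{x_0}A\,dx_0\big)=\res(A)$; the same reasoning handles $\TR$ and the right-translated averages. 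Finally, if $m<-n$ then all five operators are trace-class by Lemma \ref{lem_tr_PsimG<-n}, the $L^2$-trace is conjugation-invariant, and it commutes with the $x_0$-integral (e.g.\ by Fubini on the integral kernels), so they share the same trace.

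The only step here that is not pure bookkeeping is this last interchange of the residue-density, canonical-trace-density and trace functionals with the integral over $x_0\in G$; it rests on the continuity of these functionals on the relevant Fr\'echet spaces together with the smooth dependence of ${}_{x_0}A$, equivalently of its local symbols, on $x_0$, both supplied by the proof of Proposition \ref{prop_transPsiclG}. Everything else reduces to the functoriality of the principal symbol and of the two densities under diffeomorphisms and to the bi-invariance of the Haar measure on the compact group $G$.
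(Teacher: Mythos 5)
Your argument is correct and follows essentially the same route the paper has in mind. The paper asserts the corollary follows readily from the proof of Proposition \ref{prop_transPsiclG} (where the translated operators are re-expressed in translated local charts), and your reformulation of this as ${}_{x_0}A=F_L^*A$, $A_{x_0}=F_R^*A$ followed by the density transformation laws $|F'(x)|\res_{F(x)}(F^*A)=\res_x(A)$, $|F'(x)|\TR_{F(x)}(F^*A)=\TR_x(A)$ (with $|F'|\equiv 1$ by bi-invariance of the Haar measure) is exactly that observation made explicit; the interchange of $\res_{x_1}$, $\TR_{x_1}$ and $\tr$ with the $x_0$-integral is justified, as you note, by the smooth dependence of the local symbols on $x_0$ established in the proof of Proposition \ref{prop_transPsiclG}.
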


In the next section, we will use Part (1) to identify the principal symbol of a classical operator with a function $G\times (\fg^*\backslash\{0\})\to \bC$
which is homogeneous in $\xi\in \fg^*\backslash\{0\}$.
We will not use Part (2) in this paper.

\subsection{Principal symbols}
\label{subsec_princ_symbol}

Our definition of homogeneous symbol is motivated by the following important property
for which we  need the following conventions.
Recall that 
for a non-trivial (unitary) representation $\pi$
 when a maximal torus $T$ is fixed, 
 the representation space $\cH_\pi$
 decomposes orthogonally into  $\pi(T)$-eigenspaces.
 The  corresponding non-zero eigenvalues form the set of weights.
 We will also use the notion of analytical integral weight.
Some well-known facts will also recalled and used be below.
References for this classical material include  
 \cite{knapp_bk} (especially Chapters IV and V) and \cite{hall_bk} (especially Section 12).

\begin{lemma}
\label{lem_homG}
	Let $A=\Op_G(\sigma_A)\in \Psi^0_{cl}(G)$.
	Let $\chi_1,\chi_2\in \cD(G)$ with small supports near $e_G$ and identically equal to 1 on a small neighbourhood of $e_G$.
Let $b \in S^0(\bR^n)$ be the principal symbol of the operator $\exp ^* \chi_1 A\chi_2:f \mapsto (\chi_1 A((\chi_2f)\circ \exp_G))\circ \exp_G^{-1}$.
Let $w\in \fg^*$.
If there exists a maximal torus $T$ of $G$ and a non-trivial irreducible representation $\pi$ of $G$ such that $w$ is an analytical integral weight for $\pi$ and $T$ then 
$$
b(0,w) =\lim_{k\to +\infty} (\sigma_A(e_G,\pi^{\otimes k}) 
v^{\otimes k}, 
v^{\otimes k})_{\cH_\pi^{\otimes k}},
$$
where $v$ is a  unit $w$-weight vector.
\end{lemma}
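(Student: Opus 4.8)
The plan is to unwind the right-hand side into an integral against the convolution kernel of $A$ at $e_G$, and then to extract the homogeneous principal symbol by a dilation argument carried out in the exponential chart. Write $g(x):=(\pi(x)v,v)_{\cH_\pi}$; unitarity of $\pi$ and the Cauchy--Schwarz inequality give $g\in C^\infty(G)$, $g(e_G)=1$ and $|g(x)|\le 1$ for all $x$. Since $v^{\otimes k}$ is a weight vector of $\pi^{\otimes k}$ for the weight $kw$, so that $(\pi^{\otimes k}(x)v^{\otimes k},v^{\otimes k})=g(x)^k$, and since $\sigma_A(e_G,\pi^{\otimes k})=\widehat{\kappa_{A,e_G}}(\pi^{\otimes k})=\int_G\kappa_{A,e_G}(x)\,\pi^{\otimes k}(x)^*\,dx$, one obtains
$$
\big(\sigma_A(e_G,\pi^{\otimes k})v^{\otimes k},v^{\otimes k}\big)_{\cH_\pi^{\otimes k}}=\int_G\kappa_{A,e_G}(x)\,\overline{g(x)}^{k}\,dx .
$$
Thus the claim reduces to showing that this quantity converges to $b(0,w)$ as $k\to+\infty$. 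We shall use repeatedly that $w\neq 0$, which holds because $w$ is a weight of the non-trivial irreducible representation $\pi$.

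Next I would localise near $e_G$. Fix $u\in\cD(G)$ with $0\le u\le 1$, supported in a small neighbourhood of $e_G$ lying inside the chart used to define $b$, and identically $1$ near $e_G$. Since $\kappa_{A,e_G}$ is smooth away from $e_G$, the contribution of $1-u$ equals $\int_G h\,\overline{g}^{k}$ for a fixed $h\in\cD(G)$; decomposing $\pi^{\otimes k}$ into irreducible summands, using Schur orthogonality and $\sum_\varpi\|P_\varpi v^{\otimes k}\|^2=1$, this is bounded by $\max_\varpi\|\widehat h(\varpi)\|$ over the summands $\varpi$ carrying the weight $kw$, and each such $\varpi$ has Casimir eigenvalue $\lambda_\varpi\ge k^2\|w\|^2-C$ (the Casimir eigenvalue dominates the square of any weight of an irreducible representation). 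As $h$ is smooth this is $O(k^{-\infty})$, and the same estimate disposes of the genuinely smooth part of $\kappa_{A,e_G}$ occurring below. It remains to study $\int_G\kappa_{A,e_G}\,u\,\overline{g}^{k}$ in the exponential chart $X\mapsto\exp_G X$.

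Two local facts enter. First, a normal form: for small $X\in\fg$ one has $g(\exp_G X)=e^{2\pi i\langle w,X\rangle}(1+\rho(X))$ with $\rho(X)=O(|X|^2)$ and $\mathrm{Re}\,\rho(X)\le 0$; indeed the linear term of $\log g(\exp_G\,\cdot)$ is $(\pi(X)v,v)$, which equals $2\pi i\langle w,X\rangle$ on $\ft$ (the defining property of the analytical integral weight $w$ and of $v$) and vanishes on the orthogonal complement of $\ft$ (there $X$ lies in a sum of root spaces, which move the weight $w$ to $w+\alpha\neq w$ and hence send $v$ to a vector orthogonal to it), while $\mathrm{Re}\,\rho=\log|g(\exp_G\,\cdot)|\le 0$. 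Secondly, the classical structure of the kernel: in this chart the localised operator lies in $\Psi^0_{cl}(\bR^n)$ with principal symbol $b$, so $\kappa_{A,e_G}(\exp_G\,\cdot)$, multiplied by the even Jacobian $1+O(|X|^2)$ of $\exp_G$, has the standard classical expansion $c\,\delta_0+\kappa_{-n}+\sum_{j\ge 1}\kappa_{-n+j}+(\text{log--homogeneous terms})+(\text{smooth})$, where $\kappa_{-n+j}$ is homogeneous of degree $-n+j$ and $\cF_{\bR^n}\kappa_{-n}=b(0,\cdot)-c$, with $c$ the spherical mean of $b(0,\cdot)$.

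Finally one runs the dilation. Write $u(\exp_G X)\overline{g(\exp_G X)}^{k}=e^{-2\pi ik\langle w,X\rangle}m_k(X)$ with $m_k(X)=u(\exp_G X)(1+\overline{\rho(X)})^k$; then $|m_k|\le 1$ and $m_k(Z/k)\to 1$ pointwise because $k\,\overline{\rho(Z/k)}=O(|Z|^2/k)\to 0$. The $\delta_0$-term contributes exactly $c$. For a piece homogeneous of degree $-n+j$ the scale-invariance $\langle T,\varphi(\lambda\,\cdot)\rangle=\lambda^{-n-d}\langle T,\varphi\rangle$ of the pairing of a degree-$d$ homogeneous distribution with a test function, applied with $\lambda=k$, turns $\langle\kappa_{-n+j},e^{-2\pi ik\langle w,\cdot\rangle}m_k\rangle$ into $k^{-j}\langle\kappa_{-n+j},e^{-2\pi i\langle w,\cdot\rangle}m_k(k^{-1}\,\cdot)\rangle$. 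One then shows $\langle\kappa_{-n},e^{-2\pi i\langle w,\cdot\rangle}m_k(k^{-1}\,\cdot)\rangle\to\cF_{\bR^n}\kappa_{-n}(w)=b(0,w)-c$, that the pairings with $\kappa_{-n+j}$ for $j\ge 1$ stay bounded (so those pieces vanish after the factor $k^{-j}$), that the log--homogeneous terms carry a decaying factor $O(k^{-n})$, and that the smooth remainder is $O(k^{-\infty})$ by the localisation estimate; summing gives $c+(b(0,w)-c)=b(0,w)$. The step I expect to be the main obstacle is precisely this convergence: $\kappa_{-n}$ is a homogeneous distribution of the critical degree $-n$, and the rescaled test functions $e^{-2\pi i\langle w,\cdot\rangle}m_k(k^{-1}\,\cdot)$ do not decay at infinity, so dominated convergence is unavailable; one must combine the scale-invariance above with an oscillatory-integral analysis on the unit sphere, using $w\neq 0$ and the facts that $m_k(k^{-1}\,\cdot)$ is bounded, tends to $1$, and varies only at a controlled scale, splitting the sphere into a neighbourhood of the subgroup $\{|g|=1\}$ (where one integrates by parts along the subgroup) and its complement (where $|g|^k$ is exponentially small). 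Controlling the quadratic phase error $\rho$ and the bookkeeping of the lower-order kernel terms are the remaining technical points.
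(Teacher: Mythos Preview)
Your approach is sound and arguably more careful than the paper's about the error terms, but it is considerably more laborious. The paper's route stays on the symbol side: writing the cut-off kernel in the exponential chart, it recognizes $\int_\fg \chi(Y)\,\kappa_{A,e_G}(\exp_G Y)\,e^{ir\xi\cdot Y}\,|\jac_Y\exp|\,dY$ as the local symbol $a(0,r\xi)$ (up to smoothing) and uses directly that $a(0,r\xi)\to b(0,\xi)$ as $r\to\infty$ for a classical symbol of order $0$; then, via highest-weight theory, the matrix coefficient $(\pi^{\otimes k}(e^Y)v^{\otimes k},v^{\otimes k})$ is identified with $(\pi_{kw}(e^Y)v_{kw},v_{kw})$ and set equal to $e^{ikw(Y)}$, so the right-hand side becomes $a(0,kw)$ plus a smoothing contribution. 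The asserted equality $(\pi_{kw}(e^Y)v_{kw},v_{kw})=e^{ikw(Y)}$ is in fact only exact for $Y\in\ft$ --- the discrepancy is precisely your quadratic $\rho$ --- so the paper's closing ``This implies easily the limit relation'' hides the same correction you make explicit. What the paper's route buys is that one never expands the kernel into homogeneous pieces or confronts the critical-degree pairing you flag as the main obstacle: the limit $a(0,kw)\to b(0,w)$ is a one-line consequence of the poly-homogeneous expansion of the \emph{symbol}. What your route buys is an honest treatment of the $\rho$-correction, at the cost of the delicate dilation analysis of $\kappa_{-n}$. If you want to shorten your argument, drop the kernel expansion and instead control the single difference $\int_\fg\chi(Y)\,\kappa_{A,e_G}(\exp_G Y)\,e^{-ikw(Y)}\big((1+\bar\rho(Y))^k-1\big)|\jac_Y\exp|\,dY$: the extra vanishing of $\rho$ at $0$ effectively lowers the order of the symbol being sampled, which is what makes this term negligible.
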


As the eigenspace of an analytical integral weight is one-dimensional,
 the limit relation in Lemma \ref{lem_homG} does not depend on the choice of a unit highest weight vector. 

  

\begin{proof}[Proof of Lemma \ref{lem_homG}]
If $f$ is a smooth function supported in a neighbourhood small enough of $e_G$ and if $X$ is in a neighbourhood small enough of $e_G$, we have:
\begin{align*}
A(f\circ \exp_G^{-1})\ (\exp_G(X))
&=
\int_G f\circ \exp_G^{-1} (y) \kappa_{A,\exp_G(X)}(y^{-1} \exp_G(X)) dy
\\
&=	
\int_{\fg} f(Y) \kappa_{A,\exp_G(X)}(\exp_G(-Y)\exp_G(X)) 
|\jac_Y \exp|\ dY
\end{align*}
after the change of variable $y=\exp_G(Y)$.
Hence
the symbol $a$ of the operator $\exp ^* \chi_1 A\chi_2$
 is, up to a smoothing symbol,  given by
$$
a(X,\xi)= \int_{Y\in \fg} \chi(Y)\kappa_{A,\exp_G(X)} (\exp_G(Y-X)\exp_G(X)) e^{i\xi \cdot Y}
|\jac_Y \exp| \
dY,
$$
where $\chi\in \cD(\bR)$ is valued in $[0,1]$ and supported in a small neighbourhood of 0 
with $\chi\equiv 1$  near 0.
Consequently, its principal symbol at $X=0$ is 
$$
b(0,\xi)=\lim_{r\to +\infty} 	
	\int_{Y\in \fg} \chi(Y) \kappa_{A,e_G} (\exp_G(Y)) e^{ir\xi \cdot Y}
|\jac_Y \exp| \
	dY.
$$

Let $w\in \fg^*$ such that there exists a maximal torus $T$ of $G$ and a non-trivial irreducible representation $\pi$ of $G$ such that $w$
 is an analytical integral weight for $\pi$ and $T$.
We fix an ordering on $\fg^*$ so that $w$ is dominant and has become the highest weight of $\pi$.
Let  $v$ be a unit highest weight vector.
For every $k\in \bN$,
the highest weight theory yields:
$$
\forall y\in G\qquad
(\pi_{kw}(y)v_{kw}, 
v_{kw})_{\cH_{\pi_{kw}}}
=
(\pi^{\otimes k}(y)v^{\otimes k}, 
v^{\otimes k})_{\cH_\pi^{\otimes k}}
$$
where $\pi_{k w}$ is the representation with highest weight $kw$ and $v_{kw}$ a unit highest weight vector.
Therefore, we have for any $x\in G$
\begin{align}
(\sigma_A(x,\pi^{\otimes k}) 
v^{\otimes k}, 
v^{\otimes k})_{\cH_{\pi^{\otimes k}}}
&=
(\sigma_A(x,\pi_{kw}) 
v_{kw}, 
v_{kw})_{\cH_{\pi_{kw}}}\label{eq_sigmapootimesk}
\\
&=
(\widehat \kappa_{A,x} 
(\pi_{kw}) v_{kw}, v_{kw})_{\cH_{\pi_{k_w}}}.	\nonumber
\end{align}
Setting $\chi_G=\chi\circ \exp$, we have 
$\widehat \kappa_{A,e_G} = \cF_G (\chi_G\kappa_{A,e_G})
+  \cF_G ((1-\chi_G)\kappa_{A,e_G})$, 
and the second term is smoothing, so 
$$
|(\cF_G \left((1-\chi_G)\kappa_{A,e_G} \right)(\pi_{kw}) v_{kw}, 
v_{kw})_{\cH_{\pi_{k w}}}|
\leq
\|\cF_G \left((1-\chi_G)\kappa_{A,e_G} \right)(\pi_{kw})\|_{\sL(\cH_{\pi_{k w}})}
 $$
for any $N\in \bN$.
By \cite[Lemma 3.8]{monarxiv}, 
$\langle \lambda_{\pi_{kw}}\rangle\asymp
\langle kw \rangle^2\asymp k^2$ for $w$ fixed.
For the first term, we use 
$$
(\pi_{kw}(e^Y)v_{kw}, 
v_{kw})_{\cH_{\pi_{kw}}}
=
 e^{ikw(Y)},
$$
for $Y$ in a small neighbourhood of 0
to obtain:
$$
(\cF_G \left(\chi_G\kappa_{A,e_G} \right)(\pi_{kw}) v_{kw}, 
v_{kw})_{\cH_{\pi_{k w}}}
=
\int_\fg
\chi(Y)\kappa_{A, e_G}(\exp_G(Y))
e^{k i w(Y)}
|\jac_Y \exp| \
	dY
$$
This implies easily the limit relation in the statement.
\end{proof}

Having chosen a maximal torus $T$ in $G$,
the set of analytical integral functionals 
$$
\{w \ \mbox{analytical integral weight for}\ \pi\ \mbox{and}\ T \ :  \ \pi\in \Gh\backslash\{1_{\Gh}\}\},
$$
form a lattice of the dual $\ft^*$ of the Lie algebra $\ft$ of $T$
(see also \cite[Section 2.4]{monarxiv} for a discussion with this viewpoint).
It is easily seen that
given any real finite-dimension vector space $V$, 
 a continuous homogeneous function on $V\backslash\{0\}$  is characterised by its restriction to any lattice of $V$.
Hence, Lemma \ref{lem_homG} implies  that, 
keeping its notation, 
$b(0,\cdot)$ is determined completely on $\ft^*\backslash\{0\}$.
Now the unions of all the possible maximal tori and of the duals of their Lie algebras are $G$ and $\fg^*$ respectively.
Therefore, 
$b$ is determined completely on $\{0\}\times \fg^*\backslash\{0\}$.
These considerations together with 
 the properties of the translations (see Section \ref{subsec_invariance})
 and the definition of principal symbols yield:
 
 \begin{theorem}
\label{thm_princ_symb}
Let $A=\Op_G(\sigma_A)\in \Psi^0_{cl}(G)$.
Using the homogeneity and the pullback of the left translations, 
its principal symbol  is identified with a smooth function $a_0 :G\times (\fg^*\backslash\{0\})\to \bC$ which is  homogeneous of degree 0 in the variable $\xi\in \fg^*\backslash\{0\}$. 
Then we have for $x\in G$ and any non-trivial $\pi\in \Gh$, 
$$
a_0(x,w) =\lim_{k\to +\infty} (\sigma_A(x,\pi^{\otimes k}) 
v^{\otimes k}, 
v^{\otimes k})_{\cH_\pi^{\otimes k}},
$$
where $w$ and $v\in \cH_\pi$ are the highest weight and a corresponding unit vector for a given maximal torus $T$ and ordering on $\fg^*$.
This characterises $a_0$.
\end{theorem}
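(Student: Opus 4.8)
The plan is to reduce everything to Lemma~\ref{lem_homG}, which already carries out the analytic work at the base point $e_G$, and then to propagate the information to all of $G\times(\fg^*\setminus\{0\})$ using the translation invariance of the classical calculus together with the left-trivialization of $T^*G$.

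For the first assertion, recall that the homogeneous principal symbol of a classical operator on the manifold $G$ is invariantly defined as a smooth function on $T^*G\setminus\{0\}$, homogeneous of degree $0$ in the fibres; this is the object already denoted $a_m$ in Corollary~\ref{cor_prop_transPsiclG}. Composing it with the left-trivialization $T^*G\simeq G\times\fg^*$, $\eta\in T^*_xG\mapsto(x,(dL_x)^*\eta)$, produces the desired smooth function $a_0:G\times(\fg^*\setminus\{0\})\to\bC$, homogeneous of degree $0$ in the second variable. To compute $a_0(e_G,\cdot)$ I would note that, since $d(\exp_G)_0=\mathrm{id}_\fg$ and since multiplying $A$ by cut-offs equal to $1$ near $e_G$ does not change its symbol there, the function $\xi\mapsto a_0(e_G,\xi)$ coincides with the function $\xi\mapsto b(0,\xi)$ of Lemma~\ref{lem_homG}. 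Applying that lemma to the prescribed non-trivial $\pi\in\Gh$ and its highest weight $w$ (an analytical integral weight, for which $v^{\otimes k}$ is a highest-weight vector of $\pi^{\otimes k}$) then gives the stated limit formula at $x=e_G$.

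For arbitrary $x\in G$ I would translate. By Proposition~\ref{prop_transPsiclG} the left translate ${}_xA$ again lies in $\Psi^0_{cl}(G)$, and by the formulae of Section~\ref{subsec_invariance} its symbol at $e_G$ is $\sigma_{{}_xA}(e_G,\pi)=\sigma_A(x,\pi)$; thus applying the case just proved to ${}_xA$ identifies $\lim_{k\to+\infty}(\sigma_A(x,\pi^{\otimes k})v^{\otimes k},v^{\otimes k})_{\cH_\pi^{\otimes k}}$ with the value at $(e_G,w)$ of the trivialized principal symbol of ${}_xA$. On the other hand, Corollary~\ref{cor_prop_transPsiclG}(1) identifies the principal symbol of ${}_xA$ with the left-translate of that of $A$; since the left-trivialization is built from the same left translations, a short unwinding shows that this translated symbol, read at $(e_G,w)$, equals $a_0(x,w)$. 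Comparing the two gives the formula for all $x$.

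Finally, for the characterization I would use the discussion preceding the statement: for a fixed maximal torus $T$, the analytical integral weights (for the representations in $\Gh$ and $T$) form a full-rank lattice of $\ft^*$, whose non-zero points are exhausted by the highest weights of the non-trivial irreducible representations as the ordering varies; hence, letting $T$, the ordering and $\pi$ range, the weights $w$ occurring in the formula have rays dense in the unit sphere of $\fg^*$, using that $\bigcup_{T}\ft^*=\fg^*$ since every Cartan subalgebra of $\fg$ is a $G$-conjugate of a fixed one. As $a_0(x,\cdot)$ is continuous and homogeneous of degree $0$, it is determined by its restriction to any such dense family of rays, which is exactly the last claim. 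The step demanding the most care is the bookkeeping in the previous paragraph, namely matching the cotangent lift of left translation in Corollary~\ref{cor_prop_transPsiclG}(1) with the left-trivialization so that translating the operator corresponds precisely to $a_0(x,w)\mapsto a_0(x_0x,w)$; beyond that and the input of Lemma~\ref{lem_homG}, the argument is routine.
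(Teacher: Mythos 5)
Your proposal is correct and follows essentially the same route the paper takes: it reduces the identity to Lemma~\ref{lem_homG} at $e_G$ (noting $d\exp_G|_0=\mathrm{id}$ so that the local principal symbol there agrees with the left-trivialized one), propagates to general $x$ via ${}_xA$, $\sigma_{{}_xA}(e_G,\pi)=\sigma_A(x,\pi)$ and Corollary~\ref{cor_prop_transPsiclG}(1), and obtains the final "characterisation" clause from the density of rays through analytical integral weights as $T$, the ordering, and $\pi$ vary. The bookkeeping you flag between the cotangent lift of $L_x$ and the left-trivialization does close up correctly and is exactly the "properties of the translations" the paper gestures at without writing out.
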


\subsection{Homogeneous symbols and classical symbols on $G$}
\label{subsec_hom_symb}

We are led to define the following notion of homogeneity.

\begin{definition}
\label{def_0homsymbolG}
Let $\sigma$ be a symbol.
It is said to be \emph{0-homogeneous} when the following condition is satisfied
for every $x\in G$ and every  $\pi\in \Gh\backslash\{1\}$
of $G$.
We fix a maximal torus $T$ and a realisation of $\pi$ as a unitary irreducible representation of $G$. We consider an orthonormal basis of $\pi(T)$-eigenvectors for the representation space.
The only possibly non-zero coefficients of $\sigma(x,\pi)$ 
are the diagonal ones corresponding to analytical integral weights, 
i.e. 
 $(\sigma(x,\pi)v_1,v_2)=0$ if $v_1,v_2$ are $\pi(T)$-eigenvectors 
 for weights which are not both analytical integral, 
 or if $v_1,v_2$ are $\pi(T)$-eigenvectors 
 for distinct analytical integral weights.
 Furthermore, 
 if $v$ is a unit eigenvector for an analytical integral weight for $\pi$ and $T$,
then we have for every $k\in \bN$
$$
(\sigma(x,\pi) v,v)_{\cH_\pi}= 
(\sigma(x,\pi^{\otimes k}) v^{\otimes k},v^{\otimes k})_{\cH_{\pi^{\otimes k}}}
$$
\end{definition}
 
In the last relation,  $\pi^{\otimes k}$ denotes the $k$-tensor product of the representation $\pi$.
As $v$ is a unit eigenvector for an analytical integral weight $w$ for $\pi$ and $T$,
 $v^{\otimes k}$ is a unit eigenvector for the analytical integral weight $kw$ for $\pi^{\otimes k}$ and $T$.
Furthermore, this last relation does not depend on the choice of the unit eigenvector $v$ since the eigenspace of an analytical integral weight 
 is one-dimensional.
 In fact, it implies
 $$
(\sigma(x,\pi) v,v)_{\cH_\pi}= 
(\sigma(x,\pi_{kw}) v_{kw},v_{kw})_{\cH_{\pi_{wk}}},
$$
where $\pi_{k w}$ is the representation with highest weight $kw$ and $v_{kw}$ a unit highest weight vector, see \eqref{eq_sigmapootimesk}.

Definition \ref{def_0homsymbolG}
gives a condition on the symbol $\sigma$ 
 as a field over $G\times \Gh$.
 Indeed,  
 if $\pi_1$ is equivalent to the representation $\pi$ in the definition, 
 then $\pi_1$ and $\pi$ are isomorphic and therefore have the same matrix representation in an orthonormal basis of weight vectors.
Furthermore, the condition in  Definition \ref{def_0homsymbolG} does not depend on the choice of a maximal torus $T$.
Indeed, if $T'$ is another maximal torus $T$ 
then $T'$ and $T$ are conjugate by an element $g$ of $G$;
  the co-adjoint action at $g$ is an isomorphism for the two sets of weights and the unitary map $\pi(g)$  intertwines the matrix representation in the basis of eigenvectors for $\pi(T)$ and $\pi(T')$.

\medskip

We can now define our notion of classical poly-homogeneous expansion for our global symbol in $G$:

\begin{definition}
A symbol $\sigma $ is said to be \emph{$m$-homogeneous} with $m\in \bC$
when $\lambda_\pi^{m/2}\sigma(x,\pi)$ yields a 0-homogeneous symbol.

A symbol is \emph{classical of complex order} $m\in \bC$ when 
 $\sigma$ is in $S^{\Re m}(G)$
 and admits the expansion
$\sigma\sim_h \sum_{j\in \bN_0} \sigma_{m-j}$ 
where each $\sigma_{m-j}$ is in $S^{\Re m-j}$ and is $(m-j)$-homogeneous.
\end{definition}

 In the case of the group being a torus $G=\bT^n$, 
 this definition is consistent with the ones given in Section \ref{subsec_psiclT}. 
 Therefore, we can generalise the notation of that section
 and  denote by $S^m_{cl}(G)$ the space of classical symbol of order $m\in \bC$.
We obtain as in the case of the torus:

\begin{theorem}
\label{thm:PsiclG}
For any $m\in \bC$, we have
$$\Psi^m_{cl}(G)=\Op_{G}(S^m_{cl}(G)).$$
\end{theorem}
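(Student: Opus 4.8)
The strategy is to transplant the torus argument (Theorem~\ref{thm:PsiclT} and Propositions~\ref{prop_ext_sigma_hom}--\ref{prop_thm:PsiclT}) to $G$, replacing the trigonometric series and the Poisson summation formula by the highest weight theory and the exponential chart, and using the translation invariance already available (Section~\ref{subsec_invariance} and Proposition~\ref{prop_transPsiclG}). First I would record that both $\Psi^m_{cl}(G)$ and $\Op_G(S^m_{cl}(G))$ are invariant under left and right translations: for $\Psi^m_{cl}(G)$ this is Proposition~\ref{prop_transPsiclG}, while for $S^m_{cl}(G)$ it follows from Section~\ref{subsec_invariance} once one checks that the maps $\sigma\mapsto\sigma_{{}_{x_0}A}$ and $\sigma\mapsto\sigma_{A_{x_0}}$ preserve the homogeneity conditions of Definition~\ref{def_0homsymbolG}; indeed these maps only shift the $x$-variable or conjugate $\sigma(x,\pi)$ by $\pi(x_0)$, which amounts to changing the maximal torus and the weight basis, under which the conditions of Definition~\ref{def_0homsymbolG} were already observed to be invariant. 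Consequently it suffices, exactly as in the proofs of Propositions~\ref{prop_transPsiclG} and \ref{prop_thm:PsiclT}, to analyse an operator and its candidate classical symbol in a fixed neighbourhood of $e_G$, which I identify with a neighbourhood of $0$ in $\fg\cong\bR^n$ through $\exp_G$.

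The first lemma I would prove is the non-abelian counterpart of Proposition~\ref{prop_ext_sigma_hom}, i.e.\ the symbol-level version of Theorem~\ref{thm_princ_symb}: for each $m\in\bC$, an $m$-homogeneous symbol $\sigma$ is determined by, and determines, a smooth function $a:G\times(\fg^*\backslash\{0\})\to\bC$ which is homogeneous of degree $m$ in the $\fg^*$-variable, via $(\sigma(x,\pi)v,v)_{\cH_\pi}=a(x,w)$ whenever $v$ is a unit weight vector of (analytically integral) weight $w$ for some maximal torus $T$; moreover, as in Proposition~\ref{prop_ext_sigma_hom}, such a $\sigma$ lies in $S^{\Re m}(G)$. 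The tensor-power relation in Definition~\ref{def_0homsymbolG} is what encodes the $\bN$-homogeneity on each weight lattice, since $v^{\otimes k}$ is a weight vector of weight $kw$ for $\pi^{\otimes k}$; the extension from the lattice to a smooth homogeneous function on $\ft^*\backslash\{0\}$ is obtained by adapting Lemma~\ref{lem:sigma0}, whose logarithmic modulus of continuity and finite-difference estimates only use membership in $S^{\Re m}(G)$; and letting $T$ run over all maximal tori, whose Lie algebras cover $\fg$, and invoking the conjugation-invariance recorded after Definition~\ref{def_0homsymbolG}, these pieces glue to the required $a$ on $G\times(\fg^*\backslash\{0\})$.

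The second lemma, which is the crux, is the analogue of Proposition~\ref{prop_thm:PsiclT}(1): if $\sigma$ is $m$-homogeneous with associated homogeneous function $a_m$, and $\chi_1,\chi_2\in\cD(G)$ are cut-offs near $e_G$ equal to $1$ near $e_G$, then the Euclidean symbol of $\exp^*\chi_1\Op_G(\sigma)\chi_2$ coincides, modulo $S^{-\infty}$, with $\chi(X)\,a_m(X,\xi)$ for some $\chi\in\cD(\fg)$ equal to $1$ near $0$. The computation opening the proof of Lemma~\ref{lem_homG} already shows that the diagonal entries of $\widehat\kappa_{\Op_G(\sigma),x}(\pi)$ at weight vectors of integral weight $w$ are, modulo smoothing, the Euclidean Fourier transform at $w$ of the compactly supported function $Y\mapsto\chi(Y)\,\kappa_{\Op_G(\sigma),x}(\exp_G Y)\,|\jac_Y\exp_G|$ -- precisely the toroidal situation -- so the Poisson-summation argument of the proof of Proposition~\ref{prop_thm:PsiclT} runs along each maximal torus. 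What is genuinely new, and the step I expect to be the main obstacle, is to show that the remaining matrix entries of $\widehat\kappa_{\Op_G(\sigma),x}(\pi)$, namely the off-diagonal ones and those attached to weights of multiplicity greater than one, contribute only at lower order; this requires a stationary-phase estimate, uniform in the representation, of the matrix coefficients $Y\mapsto(\pi(\exp_G Y)v_1,v_2)_{\cH_\pi}$ in the directions transverse to the Cartan subalgebra, a phenomenon with no analogue on the torus.

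Granting these two lemmas, the two inclusions of Theorem~\ref{thm:PsiclG} follow as on the torus. For $\Op_G(S^m_{cl}(G))\subseteq\Psi^m_{cl}(G)$: writing $\sigma\sim_h\sum_j\sigma_{m-j}$ and $\sigma=\sum_{j<N}\sigma_{m-j}+r_N$ with $r_N\in S^{\Re m-N}(G)$, Theorem~\ref{thm:PsiG} gives $\Op_G(r_N)\in\Psi^{\Re m-N}(G)$, while the second lemma gives each $\Op_G(\sigma_{m-j})$ the classical local symbol $\chi\,a_{m-j}$ modulo $S^{-\infty}$; summing and letting $N\to\infty$ exhibits a poly-homogeneous expansion of the local symbol of $\Op_G(\sigma)$ near $e_G$, and translation invariance propagates this to charts around every point, so $\Op_G(\sigma)\in\Psi^m_{cl}(G)$. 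For the reverse inclusion, let $A\in\Psi^m_{cl}(G)$ with local symbol $a\sim_h\sum_j a_{m-j}$ near $e_G$; the first lemma turns each $a_{m-j}$ into an $(m-j)$-homogeneous symbol $\sigma_{m-j}\in S^{\Re m-j}(G)$, asymptotic summation in the Fr\'echet space $S^{\Re m}(G)$ yields $\sigma\in S^{\Re m}(G)$ with $\sigma\sim_h\sum_j\sigma_{m-j}$, hence $\sigma\in S^m_{cl}(G)$, and by the second lemma $\Op_G(\sigma)$ has the same local symbol as $A$ near $e_G$; translation invariance then gives $A-\Op_G(\sigma)\in\Psi^{-\infty}(G)$, so by Theorem~\ref{thm:PsiG} at order $-\infty$ we get $\sigma_A-\sigma\in S^{-\infty}(G)\subseteq S^m_{cl}(G)$, and therefore $A=\Op_G(\sigma_A)$ with $\sigma_A\in S^m_{cl}(G)$.
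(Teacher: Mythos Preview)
Your outline is broadly on target and parallels the paper's two-lemma structure (Lemmas~\ref{lem_Opsigma_cl} and~\ref{lem_Asigma_0}), but there is one genuine gap and one methodological divergence worth noting.

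The gap is in your first lemma. You claim that the $m$-homogeneous symbol $\sigma$ built from a smooth homogeneous function $a$ on $G\times(\fg^*\backslash\{0\})$ lies in $S^{\Re m}(G)$, and you cite Proposition~\ref{prop_ext_sigma_hom} as the model. But that proposition only extends a lattice function to a smooth homogeneous function; it says nothing about the symbol seminorms in~\eqref{eq:SmG}, which involve the fundamental difference operators $\Delta_\varphi$ acting on the matrix-valued field. The paper fills this in Lemma~\ref{lem_Asigma_0}: it writes out $\Delta_{\varphi'_1}\cdots\Delta_{\varphi'_J}\sigma_0(x,\pi)$ via the alternating-sum formula~\eqref{eq_recDelta}, observes using~\eqref{eq_a0tensors} that the only nonzero diagonal entries are iterated finite differences of $a_0(x,\cdot)$ on the weight lattice, and then applies the mean value theorem together with the homogeneity of $a_0$ to obtain the required $\langle\lambda_\pi\rangle^{-J/2}$ decay. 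This computation is the technical heart of the reverse inclusion, and your sketch does not supply it.

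The methodological divergence concerns your second lemma. You correctly flag that off-diagonal matrix coefficients and higher-multiplicity weights have no torus analogue, and you propose a uniform stationary-phase estimate on $(\pi(\exp_G Y)v_1,v_2)$ transverse to the Cartan direction. The paper takes a different route in Lemma~\ref{lem_Opsigma_cl}: it first reduces to $m=0$ by composing with $(\cL+\chi(\cL))^{-m/2}$ (classical by Seeley), then works entirely with the \emph{scalar} local symbol $b^{(X)}$ on $\fg^*$ obtained from the convolution kernel via the exponential chart and a change of variable $\Phi_X$. The remainder $R_X=b^{(X)}-b_0^{(X)}\psi$ is shown to be Schwartz by checking that its restriction to the analytically-integral weight lattice of each maximal torus $T$ is a smoothing toroidal symbol; since the Cartan subalgebras cover $\fg$, this forces $\cF_{\bR^n}^{-1}R_X$ to be smooth everywhere on $\fg$, hence $R_X\in\cS(\fg^*)$. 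No stationary-phase analysis of matrix coefficients is invoked; the argument stays at the level of scalar functions on $\fg^*$ and their restrictions to weight lattices. Your stationary-phase route may well succeed, but it would require estimates not present in the paper. A minor further difference: the paper runs the reverse inclusion recursively (Lemma~\ref{lem_Asigma_0} lowers the order by one, then iterate), whereas you asymptotically sum the whole expansion at once; either is fine.
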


The next section is devoted to the proof of Theorem \ref{thm:PsiclG}.

\subsection{Proof of Theorem \ref{thm:PsiclG}}

Let us first show the inclusion 
$\Op_{G}(S^m_{cl}(G))\subset \Psi^m_{cl}(G)$.
By  linearity of $\Op_{G}$ and the result on $\cup_{m_1\in \bR} \Psi^{m_1}(G)$, it suffices to show:
\begin{lemma}
\label{lem_Opsigma_cl}
If  $\sigma\in S^{\Re m}$ is $m$-homogeneous, then $\Op_G(\sigma)\in \Psi^m_{cl}(G)$.
\end{lemma}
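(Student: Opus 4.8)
The plan is to reduce the statement to the local picture on $\bR^n$ via the exponential chart, exactly as in the torus case (Proposition \ref{prop_thm:PsiclT}), and then to use Lemma \ref{lem_homG} together with Theorem \ref{thm_princ_symb} to identify the homogeneous principal symbol of the localised operator. First I would fix a finite open cover $(\Omega_j)_j$ of $G$ and a partition of unity $\chi_j=\psi_j(\cdot\, z_j)$ as in the proof of Proposition \ref{prop_transPsiclG}, with $\psi_j$ supported near $e_G$ and identically $1$ on a smaller neighbourhood, and use the translation invariance of both $\Psi^m(G)$ and the class of $m$-homogeneous symbols (Section \ref{subsec_invariance}) to reduce to studying a single operator $A_0:=\psi_0\, {}_{z_0^{-1}}(\Op_G(\sigma))\, \psi_0$ supported near $e_G$; pushing forward by $\exp_G^{-1}$ gives an operator on an open neighbourhood $\cO$ of $0$ in $\bR^n$, and we already know $A_0\in\Psi^{\Re m}(\cO)$ since $\sigma\in S^{\Re m}(G)$ and the H\"ormander property is chart-independent. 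It remains to show the local symbol $a$ of $A_0$ on $\cO\times\bR^n$ admits a poly-homogeneous expansion with complex order $m$.

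Next, because $\sigma$ is $m$-homogeneous, the convolution kernel $\kappa_{A,x}$ of $A=\Op_G(\sigma)$ satisfies $\kappa_{A,x}=\cF_G^{-1}\{\sigma(x,\cdot)\}$, and the scaling behaviour of $\sigma$ under $\pi\mapsto\pi^{\otimes k}$ encodes homogeneity of the (local) symbol. The key computation, carried out just as in the proof of Lemma \ref{lem_homG}, is
$$
a(X,\xi)=\int_{Y\in\fg}\chi(Y)\,\kappa_{A,\exp_G(X)}\big(\exp_G(Y-X)\exp_G(X)\big)\,e^{i\xi\cdot Y}\,|\jac_Y\exp|\,dY\pmod{S^{-\infty}},
$$
and I would extract the homogeneous principal part $a_m(X,\xi)=|\xi|^{m}a_m(X,\xi/|\xi|)$ by the limit $a_m(0,\xi)=\lim_{r\to\infty}r^{-m}a(0,r\xi)$ (and similarly after left-translation for general $X$); Lemma \ref{lem_homG} and Theorem \ref{thm_princ_symb} identify this limit with the value $(\sigma_{m}\text{-part of }\sigma)$ evaluated on the appropriate weight vectors, so $a_m$ is a genuine smooth homogeneous function on $\cO\times(\bR^n\setminus\{0\})$, of degree $\Re m$ but complex-homogeneous of order $m$ in the sense used in Section \ref{subsec_prel_EPDO}. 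Subtracting a classical symbol $\tilde a_m\in S^m_{cl}(\cO)$ with this principal part from $a$ produces a symbol of order $\Re m-1$; the corresponding operator is $\Op_G$ of a symbol that is, modulo $S^{\Re m-1-\epsilon}$, again of the homogeneous type, so one may iterate. I would therefore set up an induction on $j$: having peeled off $\tilde a_m,\dots,\tilde a_{m-j+1}$, the remainder is represented by a global symbol in $S^{\Re m-j}(G)$ which, to leading order, is $(m-j)$-homogeneous in the same sense, and Lemma \ref{lem_homG} applied to this lower-order operator yields $\tilde a_{m-j}$. Assembling $\sum_j\tilde a_{m-j}$ and invoking Borel summation (completeness of $S^{\Re m}(\cO)$) gives a classical symbol asymptotic to $a$, whence $A_0\in\Psi^m_{cl}(\cO)$ and, undoing the reductions, $\Op_G(\sigma)\in\Psi^m_{cl}(G)$.

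The main obstacle I anticipate is the inductive step: I must check that after subtracting the principal classical part, the \emph{remainder} is still of a form to which Lemma \ref{lem_homG} (or its quantitative refinement) applies — i.e. that the order-$(m-j)$ term of the remaining global symbol really is $(m-j)$-homogeneous and not merely $O(\langle\lambda_\pi\rangle^{(\Re m-j)/2})$. This requires knowing that the expansion $\sigma\sim_h\sum_j\sigma_{m-j}$ hypothesised in the definition of classical symbols is compatible with the subtraction of the locally-constructed classical terms; equivalently, that the homogeneous building blocks $\sigma_{m-j}$ on the group side match the $\tilde a_{m-j}$ on the chart side. In the lemma as stated, however, $\sigma$ is assumed \emph{exactly} $m$-homogeneous (not merely classical), which simplifies matters considerably: then $\kappa_{A,x}$ is, away from $e_G$, smooth and the chart symbol $a$ differs from a single homogeneous symbol $\tilde a_m$ by a symbol whose kernel is smooth near the diagonal, i.e. by a smoothing symbol — this is precisely the Poisson-summation/Taylor-expansion argument of Proposition \ref{prop_thm:PsiclT} transplanted through $\exp_G$, with lattice-point sums over weights replaced by the harmonic analysis on $G$. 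So in fact no induction is needed for the lemma itself; the delicate part is carrying the torus argument (the decomposition into near-$\xi$ and far-$\xi$ weight contributions, and the homogeneous extension of Proposition \ref{prop_ext_sigma_hom}/Theorem \ref{thm_princ_symb}) over to the non-abelian setting, controlling the error uniformly in all $X$-derivatives.
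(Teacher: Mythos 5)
Your third paragraph lands on the paper's actual strategy: since $\sigma$ is \emph{exactly} $m$-homogeneous, no iteration is needed — the local symbol $a$ differs from a single homogeneous term by a smoothing symbol, and this is the torus argument transplanted through $\exp_G$. That is the right picture. However, there are two significant gaps and one omission worth flagging.

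First, your first two paragraphs lean on Lemma \ref{lem_homG} and Theorem \ref{thm_princ_symb} to identify the homogeneous principal part $\tilde a_m$, but both of those statements take $A\in\Psi^0_{cl}(G)$ as a \emph{hypothesis} — they characterise the principal symbol of an operator already known to be classical. Invoking them here is circular. What the paper does instead is construct the candidate principal symbol \emph{directly from $\sigma$}: for each maximal torus $T$ and each analytical integral weight $w$, set $b_T^{(X)}(w):=(\sigma(e^X,\pi)v,v)$ where $v$ is a $w$-weight vector; homogeneity of $\sigma$ makes this a $0$-homogeneous symbol on the lattice of $T$-weights, which by Proposition \ref{prop_ext_sigma_hom} extends smoothly to $\ft^*\setminus\{0\}$; the uniqueness of the extension lets one glue over all maximal tori into a smooth homogeneous $b_0^{(X)}$ on $\fg^*\setminus\{0\}$. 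Your proposal gestures at "lattice-point sums over weights replaced by the harmonic analysis on $G$", but this gluing-over-maximal-tori mechanism — and the subsequent argument that the remainder $R_X=b^{(X)}-b_0^{(X)}\psi$ has smoothing restriction to \emph{every} torus lattice, hence is Schwartz on all of $\fg^*$ — is the genuinely non-trivial part, and it is not spelled out.

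Second, you omit the reduction to $m=0$. The paper composes with $(\cL+\chi(\cL))^{-m/2}$, which is classical by Seeley's theorem, so that one only needs to treat $0$-homogeneous symbols (for which Proposition \ref{prop_ext_sigma_hom} applies directly). Without this reduction you would have to redo the homogeneous-extension argument for arbitrary complex order. Finally, your remark that the abortive induction in paragraphs 1–2 "turns out to be unnecessary" is correct, but be aware that as written the inductive step would fail anyway: subtracting the classical principal part from a global classical symbol does \emph{not} leave an exactly $(m-1)$-homogeneous global remainder, so Lemma \ref{lem_homG} could not be re-applied — another reason the paper treats exact homogeneity first and derives the classical case (Theorem \ref{thm:PsiclG}) afterwards by linearity.
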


\begin{proof}[Proof of Lemma \ref{lem_Opsigma_cl}]
We already know  $A:=\Op_G(\sigma)\in \Psi^{\Re m}(G)$.
If $\chi\in \cD(\bR)$ has a small enough support near 0 with $\chi(0)=1$, 
then the operator spectrally defined via $\chi(\cL)$ is the projection onto the constant function and $\cL+\chi(\cL)$ is injective.
 By \cite[Proposition 3.14]{monJFA}, 
the operator $\chi(\cL)$ is   smoothing  and 
$ (\cL +\chi(\cL))^{-m_1/2} \in \Psi^{\Re m_1}(G)$ for any $m_1\in \bC$.
Furthermore, the properties of a Laplace operator on a compact manifold 
\cite{seeley_67} imply $ (\cL +\chi(\cL))^{-m_1/2} \in \Psi^{m_1}_{cl}(G)$.
The properties of $\Op_G$ yield
$A_1:= A(\cL +\chi(\cL))^{-m/2}:=\Op_G (\sigma_1)$
where $\sigma_1$ is the symbol given by 
$$
\sigma_1(x,1_{\Gh}):=\sigma(x,1_{\Gh})
\qquad\mbox{and}\qquad 
\sigma_1(x,\pi)=\sigma(x,\pi) \lambda_\pi^{-m/2},
\quad \pi\in \Gh\backslash\{1_{\Gh}\}.
$$	
Showing that $A_1$ is classical will imply that $A=A_1(\cL +\chi(\cL))^{m/2}$ is also classical, 
therefore this paragraph shows that we may assume $m=0$.

Let us assume $m=0$. By the properties of invariance under translation
(see Section \ref{subsec_invariance}), it suffices to show that 
the operator $\exp ^* \chi_1 A\chi_2$ is classical where 
 $\chi_1,\chi_2\in \cD(G)$ have small supports near $e_G$ and identically equal to 1 on a small neighbourhood of $e_G$.
Proceeding as in the proof of Lemma \ref{lem_homG},
the symbol $a$ of the operator $\exp ^* \chi_1 A\chi_2$
 is, up to a smoothing symbol,  given by
$$
a(X,\xi)= \int_{Y\in \fg} \chi(Y)\kappa_{A,\exp_G(X)} (\exp_G(Y-X)\exp_G(X)) e^{i\xi \cdot Y}
|\jac_Y \exp| \
dY,
$$
where $\chi\in \cD(\fg^*)$ is valued in $[0,1]$, supported near $0$ and identically equal to 1 on a small neighbourhood of $0$.
For $X\in \supp \chi$, the function $\Phi_X$ defined via 
$$
\exp_G (\Phi_X (Y)) = \exp_G(Y-X)\exp_G(X), \qquad Y\in \fg^*,
$$
is smooth on $\fg^*$.
As $D_0\Phi_X =\id$, it is a diffeomorphism between open neighbourhoods of $0$.
The change of variable $Y'=\Phi_X (Y)$ yields
$$
a(X,\xi)= b_X(\xi\circ \Phi_X^{-1})
,
\quad\mbox{where}\quad
b^{(X)}(\xi):=
\int_{Y'\in \fg} \chi(\Phi_X^{-1}(Y'))
\kappa_{A,\exp_G(X)} (Y') e^{i\xi( Y')}
|\jac_{Y'} \exp| \
dY',
$$
viewing $\xi$ as an element of $\fg^*$.
Setting 
 $$
 b_T^{(X)}(w):= (\sigma(e^X,\pi) v,v),
$$
when  $w\in \fg^*$ is an analytical integral weight for some $\pi\in \Gh$ and maximal torus $T$, and 
where $v$ is a unit $w$-weight vector,
we have
$$
b^{(X)}(w) -b_T^{(X)}(w)
=
\cF_G \{(1-\chi_X) \ \kappa_{A,e^X}\}(\pi_w),
$$
where $\chi_X=\chi \circ \Phi_X^{-1} \circ \exp_G^{-1}\in \cD(G)$ is identically 1 in a neighbourhood of $e_G$.
Setting $b_T^{(X)}(0)=0$, the function
  $b^{(X)} -b_T^{(X)}$ is therefore an invariant smoothing symbol 
on the lattice of analytical integral weight for the maximal torus $T$.
Since $\sigma$ is 0-homogeneous, we have $b_T^{(X)}(kw)=b_T^{(X)}(w)$
and the function   $b_T^{(X)}$ is a 0-homogeneous symbol on the lattice of analytical integral $T$-weights.
By Proposition \ref{prop_ext_sigma_hom}, $b_T^{(X)}$ admits a unique smooth extension to $\ft^*\backslash\{0\}$ for which we keep the same notation.
The uniqueness of the construction shows that 
we can define a function $b_0^{(X)}:\fg^*\to \bC$ 
such that its restriction to the dual $\ft^*$ of the Lie algebra of any maximal torus $T$ coincides with $b_0^{(X)}|_ {\ft^*} =b_T^{(X)}$.
Furthermore,  $b_0^{(X)}\in C^\infty (\fg^*\backslash\{0\})$ is 0-homogeneous.

We fix a function $\psi_0\in C^\infty(\bR)$ be such that 
$\psi_0(s)=0$ for $s\leq1/2$ and 
$\psi_0(s)=1$ for $s\geq1$.
We define  the smooth and bounded function $R$ on $\fg^*$ with
$$
R_X(\xi):=b^{(X)}(\xi)-b_0^{(X)}(\xi)\psi(\xi),\qquad
\psi(\xi):=
\psi_0(|\xi|), \quad \xi\in \fg^*.
$$
By construction, $R_X$ is an invariant symbol in $S^0$ on $\bR^n\sim \fg^*$, therefore $\cF^{-1}_{\bR^n} R_X$ is Schwartz away from 0.
For any maximal torus $T$, the restriction of $R_X$ to the lattice of analytical integral weights of $T$ is smoothing, 
therefore it yields a smoothing symbol on the torus $T$ and its convolution kernel
$\cF^{-1}_{T} R_X|_T$ is smooth  near $e_G\in T$.
This being true for any maximal torus $T$ implies that $\cF^{-1}_{\bR^n} R_X$ is Schwartz on $\fg$, so $R_X\in \cS(\fg^*)$.
One checks easily that the map $X\mapsto R_X \in \cS(\fg^*)$ is  smooth on a neighbourhood of 0.
As the symbol $a$ is given by
$$
a(X,\xi) = 
b_0^{(X)}(\xi\circ \Psi_X^{-1})\psi(\xi\circ \Psi_X^{-1})
\ + \ R_X(\xi\circ \Psi_X^{-1}),
$$
it satisfies $a(X,\xi)\sim_h b_0^{(X)}(\xi\circ \Psi_X^{-1})+0$.
This concludes the proof.
\end{proof}

The main ingredient  for the reverse inclusion is the following lemma:

\begin{lemma}
\label{lem_Asigma_0}
Let $A\in \Psi^0_{cl}(G)$.	
As in Theorem \ref{thm_princ_symb},
we identify its principal symbol with 
a smooth function $a_0 :G\times (\fg^*\backslash\{0\})\to \bC$
which is $0$-homogeneous in the variable $\xi\in \fg^*\backslash\{0\}$.
 
Let $x\in G$ and let  $\pi$ be an irreducible non-trivial representation of $G$. 
Fixing a maximal torus $T$, 
we define an endomorphism  $M_{x,\pi}$ of $\cH_\pi$ in the following way:
all its coefficients with respect to  an orthonormal basis of $\pi(T)$-eigenvectors of $\cH_\pi$
 vanish except (potentially) 
the diagonal ones corresponding to 
analytical integral weights $w$
where we have
$(M_{x,\pi} v,v)_{\cH_\pi} := a_0(x,w)$.
When $x$ runs over $G$ and $\pi$ runs over the set of irreducible non-trivial representation of $G$, 
the matrices $M_{x,\pi}$ together with $\sigma_0(x,1_{\Gh})=0$ define a 0-homogeneous symbol $\sigma_0$.
Furthermore, $\sigma_0\in S^0(G)$ and 
$A-\Op(\sigma_0)\in \Psi^{-1}_{cl}(G)$. 
\end{lemma}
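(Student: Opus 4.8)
The plan is to recognise $\sigma_0$ as a genuine $0$-homogeneous element of $S^0(G)$, invoke Lemma~\ref{lem_Opsigma_cl} to put $B:=\Op_G(\sigma_0)$ in $\Psi^0_{cl}(G)$, identify the principal symbol of $B$ with $a_0$ via Theorem~\ref{thm_princ_symb}, and then conclude that $A-B$, being a classical operator of order $0$ with vanishing principal symbol, is of order $-1$. So I would begin by checking that the field $\{M_{x,\pi}\}$ is well defined and $0$-homogeneous. Independence of the realisation chosen for each $\pi\in\Gh\backslash\{1\}$ and of the maximal torus $T$ follows as in the discussion after Definition~\ref{def_0homsymbolG}: an intertwiner between two realisations respects the weight-space decomposition and acts by a scalar of modulus one on each one-dimensional analytical integral weight space, so the entries $a_0(x,w)$ are unchanged while $M_{x,\pi}$ vanishes on the other weight spaces anyway. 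The tensor-power identity in Definition~\ref{def_0homsymbolG} is then automatic: if $v$ is a unit $w$-weight vector with $w$ analytical integral for $\pi$ and $T$, then $v^{\otimes k}$ is a unit $kw$-weight vector, $kw$ is analytical integral for $\pi^{\otimes k}$ and $T$, and $(\sigma_0(x,\pi^{\otimes k})v^{\otimes k},v^{\otimes k})=a_0(x,kw)=a_0(x,w)=(\sigma_0(x,\pi)v,v)$ because $a_0$ is homogeneous of degree $0$ in $\xi$.

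\textbf{The main point: $\sigma_0\in S^0(G)$.} Here one must establish the estimates \eqref{eq:SmG} (equivalently \eqref{eq:SmG-RT}) for $\{M_{x,\pi}\}$. The vector-field derivatives $X^\beta$ only differentiate the scalars $a_0(x,w)$ in $x$, so their boundedness is contained in the smoothness and $0$-homogeneity of $a_0$ on $G\times(\fg^*\backslash\{0\})$. For the difference operators I would use the invariance of the calculus under translations (Section~\ref{subsec_invariance}) to reduce to a neighbourhood of $e_G$, and then exploit Proposition~\ref{prop_ext_sigma_hom}: the restriction of $\sigma_0$ to the analytical integral weights of a fixed maximal torus $T$ is precisely the $0$-homogeneous toroidal symbol $a_0(x,\cdot)$ on (the lattice in) $\ft^*$, which by that proposition and the estimates in the proof of Lemma~\ref{lem:sigma0} is a classical toroidal symbol of order $0$ with smooth dependence on $x$; equivalently one may analyse the convolution kernel $\kappa_{0,x}=\cF_G^{-1}\sigma_0(x,\cdot)$ near $e_G$ and show it is, modulo a smooth term, a classical kernel homogeneous of degree $-n$. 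Either way, the required decay $\langle\lambda_\pi\rangle^{-|\alpha|/2}$ for $\Delta^\alpha\sigma_0$ has to be extracted by combining these torus-by-torus bounds with the highest weight description of $\lambda_\pi$ and of the weights arising in tensor products with the fundamental representations. This matching of the global symbol estimates with the toroidal ones through the highest weight theory, including the bookkeeping over weight multiplicities and the growth of $d_\pi$, is the technical heart of the argument and the step I expect to be the main obstacle.

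\textbf{Conclusion.} Granted $\sigma_0\in S^0(G)$ and $0$-homogeneous, Lemma~\ref{lem_Opsigma_cl} gives $B=\Op_G(\sigma_0)\in\Psi^0_{cl}(G)$. Applying Theorem~\ref{thm_princ_symb} to $B$: for any non-trivial $\pi$ with highest weight $w$ and unit highest weight vector $v$, the vector $v^{\otimes k}$ is a highest weight vector of $\pi^{\otimes k}$, so the principal symbol $b_0$ of $B$ satisfies $b_0(x,w)=\lim_{k\to+\infty}(\sigma_0(x,\pi^{\otimes k})v^{\otimes k},v^{\otimes k})=\lim_{k\to+\infty}a_0(x,kw)=a_0(x,w)$ by the construction of $\sigma_0$ and the $0$-homogeneity of $a_0$. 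Since Theorem~\ref{thm_princ_symb} asserts that this limit relation characterises the principal symbol, $b_0=a_0$, i.e.\ $A$ and $B$ have the same principal symbol. Hence $A-B\in\Psi^0_{cl}(G)$ has vanishing principal symbol, and a classical operator of order $0$ whose leading homogeneous term vanishes lies in $\Psi^{-1}_{cl}$; therefore $A-\Op_G(\sigma_0)\in\Psi^{-1}_{cl}(G)$, as claimed.
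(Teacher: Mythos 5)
Your overall strategy (show $\sigma_0$ is $0$-homogeneous and in $S^0(G)$, apply Lemma~\ref{lem_Opsigma_cl} and Theorem~\ref{thm_princ_symb}, subtract) matches the paper, as does the closing step: the principal symbol of $\Op_G(\sigma_0)$ is $a_0$ by construction and by the characterisation in Theorem~\ref{thm_princ_symb}, so $A-\Op_G(\sigma_0)$ is classical of order $0$ with vanishing leading term, hence of order $-1$. The well-definedness and $0$-homogeneity argument is also fine, and corresponds to what the paper dispatches by reference to Sections~\ref{subsec_princ_symbol} and~\ref{subsec_hom_symb}.

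The gap is precisely where you flag it: you never actually prove $\sigma_0\in S^0(G)$, and the route you sketch for it is not the one that works. You propose restricting to each maximal torus, invoking Proposition~\ref{prop_ext_sigma_hom} / Lemma~\ref{lem:sigma0} to get toroidal estimates, and then somehow patching them together through the highest weight combinatorics ``including the bookkeeping over weight multiplicities and the growth of $d_\pi$.'' But Proposition~\ref{prop_ext_sigma_hom} goes in the opposite direction (from a $0$-homogeneous symbol on a lattice to a smooth extension); here $a_0$ is already smooth and homogeneous on $\fg^*\backslash\{0\}$, and what one needs are decay estimates for the fundamental difference operators $\Delta_{\varphi}$, which are defined via tensor products $\varphi\otimes\pi$ of representations of $G$ and are not ``torus-by-torus'' objects. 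Also, the bound required is an operator-norm bound $\|\Delta^\alpha\sigma_0(x,\pi)\|_{\sL(\cH_\pi)}\lesssim\langle\lambda_\pi\rangle^{-|\alpha|/2}$, so neither $d_\pi$ nor weight multiplicities enter; and translation invariance in $x$ is irrelevant to the $\pi$-differences, which are uniform in $x$ anyway. The paper's actual mechanism is different and more direct: using the highest weight theory one proves (the analogue of \eqref{eq_a0tensors}) that for analytical integral weights $w_1,\dots,w_j$ with unit weight vectors $v_k$,
$$
\left(\sigma_0(x,\pi_1\otimes\cdots\otimes\pi_j)\,v_1\otimes\cdots\otimes v_j,\ v_1\otimes\cdots\otimes v_j\right)=a_0(x,w_1+\cdots+w_j),
$$
and observes that $\Delta_{\varphi}\sigma_0(x,\pi)$ (and more generally $\Delta^\alpha\sigma_0(x,\pi)$) is diagonal in the tensor weight basis with entries given by finite differences of $a_0(x,\cdot)$ on $\fg^*$ evaluated at the relevant weights; the mean value theorem and the $0$-homogeneity of $a_0$ (so that $D^J a_0$ is $(-J)$-homogeneous) then yield the $\langle w_\pi\rangle^{-|\alpha|}\asymp\langle\lambda_\pi\rangle^{-|\alpha|/2}$ decay directly, with no reduction to the torus. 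That direct argument on $\fg^*$ is the missing idea: without it, your proof of the membership $\sigma_0\in S^0(G)$ --- which, as you note, is the technical heart --- is not carried out.
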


\begin{proof}[Proof of Lemma \ref{lem_Asigma_0}]
The considerations in Sections \ref{subsec_princ_symbol} and \ref{subsec_hom_symb}
 show that the symbol $\sigma_0$ is well-defined and $0$-homogeneous.
Let us
 show $\sigma_0\in S^0(G)$.
We fix a maximal torus $T$. We will need the following observation:
 the highest weight theory implies  that 
if $w_1,\ldots, w_j$ are $j$ analytical integral weights for representations
$\pi_1,\ldots, \pi_j \in \Gh$, then 
$w_1+\ldots +w_j$ is an analytical integral weights for the irreducible representation with highest weight (in any choice of ordering) in the decomposition of $\pi_1\otimes \ldots \otimes \pi_j$ into irreducibles, so we have:
\begin{equation}
\label{eq_a0tensors}
a_0(x,w_1+\ldots +w_j) = \left(\sigma_0(x,\pi_1\otimes \ldots\otimes \pi_j) v_1\otimes\ldots\otimes v_j,v_1\otimes \ldots\otimes v_j\right)_{\cH_{\pi_1}\otimes\ldots\otimes  \cH_{\pi_j}},
\end{equation}
where each vector $v_k$ is a unitary $w_k$-vector.

Let $\varphi,\pi\in \Gh$.
The endomorphsims $\sigma_0(x,1_\varphi\otimes \pi)$ and $\sigma_0(x,\varphi\otimes \pi)$ of  $\cH_\pi\otimes \cH_\varphi$ are diagonal when viewed in an orthogonal basis of vectors of the form $v_\pi\otimes v_\varphi$ where $v_\pi$ and $v_\varphi$ are  eigenvectors for $\pi(\ft)$ and $\varphi(\ft)$ respectively.
Furthermore,  only the diagonal entries corresponding to analytical integral $w_\pi$ and $w_\varphi$ may be non-zero.
Therefore this is also the case for  $\Delta_\varphi\sigma_0(x,\pi)$ and we have by \eqref{eq_a0tensors}
 $$
 \left(\Delta_\varphi\sigma_0(x,\pi)  v_\varphi\otimes v_\pi,
 v_\varphi\otimes v_\pi\right)
 = 
 a_0 (x, w_\varphi +w_\pi)- a_0(x,w_\pi).
$$
This last expression is equal to $D_\xi a_0(x,\cdot)(w_\pi)$  for some $\xi \in [w_\pi,w_\pi+w_\varphi]$
by the mean value theorem.
The homogeneity of $a_0$ then yield the estimate 
$$
 |\left(\Delta_\varphi\sigma_0(x,\pi)  v_\varphi\otimes v_\pi,
 v_\varphi\otimes v_\pi\right)|\leq
|w_{\varphi}| |\xi|^{-1} \sup_{|\xi'|=1} \|D_{\xi'} a_0(x,\cdot)\|
\lesssim_{a_0,\varphi} \langle w_\pi \rangle^{-1}.
$$
By \cite[Lemma 3.8]{monarxiv}, $\langle w_\pi \rangle \asymp \langle \lambda_\pi\rangle^{1/2}$.
Therefore, $\|\Delta_\varphi\sigma_0(x,\pi)\|_{\sL(\cH_{\pi\otimes\varphi})}\lesssim_{a_0,\varphi} \langle \lambda_\pi \rangle^{-1/2}$.

More generally, 
we compute recursively for any symbol $\sigma$ independent of $x$ and  $\varphi'_1,\ldots,\varphi'_J,\pi\in \RepG$: 
\begin{align}	
&\Delta_{\varphi'_1}\ldots \Delta_{\varphi'_J}\sigma(\pi)
=
\sigma(x,\otimes_{k=1}^J
\varphi'_k\otimes \pi)
-
\sum_{j=1}^J
\sigma( 1_{\varphi'_j} \otimes_{k\not=j} \varphi'_k \otimes \pi)
\label{eq_recDelta}
\\
&\qquad +
\sum_{1\leq j_1<j_2\leq J}\!\!\!\!\!\!
\sigma(1_{\varphi'_{j_1}} \otimes1_{\varphi'_{j_2}}\otimes_{k\not=j_1,j_2} \varphi'_k \otimes \pi)
+\ldots \nonumber
\\
&\qquad +
(-1)^{J-1}\!\!\!\!\!\!
\sum_{1\leq j_1<j_2\leq J}\!\!\!\!\!\!
\sigma( 
\varphi'_{j_1} \otimes \varphi'_{j_2}\otimes_{k\not=j_1,j_2} 1_{\varphi'_k} \otimes \pi)
+(-1)^J\sum_{j=1}^J
\sigma( \varphi'_j \otimes_{k\not=j} 1_{\varphi'_k} \otimes \pi).
\nonumber
\end{align}
We now assume $\varphi'_1,\ldots,\varphi'_J,\pi$ irreducible.
The endomorphsim $\Delta_{\varphi'_1}\ldots \Delta_{\varphi'_J} \sigma_0(x,\pi)$ is 
 diagonal when viewed 
 in an orthogonal basis of vectors which are the projections of
of  $v_\pi\otimes v_{\varphi'_1}\otimes \ldots \otimes v_{\varphi'_J} $ 
and only the diagonal entries corresponding to analytical integral weights  $w_\pi$ and $w_{\varphi_k}$ may be non-zero. 
The formulae in \eqref{eq_a0tensors} and \eqref{eq_recDelta} applied to symbols independent of $x$ on $\Gh$ and also on the lattice of analytical integral weights yield:
$$
 \left(\Delta_{\varphi'_1}\ldots \Delta_{\varphi'_J}\sigma_0(x,\pi) 
v_\pi\otimes v_{\varphi'_1}\otimes \ldots \otimes v_{\varphi'_J},
v_\pi\otimes v_{\varphi'_1}\otimes \ldots \otimes v_{\varphi'_J}\right)
= 
\Delta_{w_{\varphi'_1}}\ldots \Delta_{w_{\varphi'_J}}a_0(x,w_\pi) ,
$$
where  $a(x,\cdot)$ is restricted to the lattice of analytical integral weights of $G$.
Iteratively applying the mean value formula, we have
$$
\Delta_{w_{\varphi'_1}}\ldots \Delta_{w_{\varphi'_J}}a(x,w_\pi) 
=
D^J_{\xi'} a_0(x,\cdot)(w_{\varphi'_1},\ldots,w_{\varphi'_J})
$$
for some $\xi'$ in a ball about $w_\pi$ and with radius $\sum_{j=1}^J |w_{\phi'_j}|$.
The homogeneity of $a_0$ then yield
$$
|D^J_{\xi'} a_0(x,\cdot)(w_{\varphi'_1},\ldots,w_{\varphi'_J})|
\leq |\xi'|^{-J} |w_{\varphi'_1}|\ldots|w_{\varphi'_J}|
\sup_{|\xi'|=1} \|D^J_{\xi'} a_0(x,\cdot)\|
\lesssim \langle w_\pi\rangle^{-J}.
$$
As $\langle w_\pi \rangle \asymp \langle \lambda_\pi\rangle^{1/2}$, 
we have obtained
$\|\Delta_{\varphi'_1}\ldots \Delta_{\varphi'_J}\sigma_0(x,\pi)\|_{\sL}
\lesssim \langle \lambda_\pi\rangle^{-J/2}$.
This shows that the symbol $\sigma_0$ is in $S^0(G)$.

By Lemma \ref{thm:PsiclG}, $\Op(\sigma_0)\in\Psi^0_{cl}(G)$
so $A-\Op(\sigma_0)$ is in $\Psi^0_{cl}(G)$.
By construction and Theorem \ref{thm_princ_symb}, 
the principal symbol of $A-\Op(\sigma_0)$ is identically zero.
This implies $A-\Op(\sigma_0)\in\Psi^{-1}_{cl}(G)$
and concludes the proof of Lemma \ref{lem_Asigma_0}.
\end{proof}

\begin{proof}[Proof of Theorem \ref{thm:PsiclG}]
We have already noticed that Lemma \ref{lem_Opsigma_cl} implies
$\Op_{G}(S^m_{cl}(G))\subset \Psi^m_{cl}(G)$.
Let us prove the reverse inclusion.
Let $A\in \Psi^m_{cl}(G)$.
Then $A(\id+\cL)^{-\frac m2}\in \Psi^0_{cl}(G)$
and we denote by  $\sigma_0$  the $0$-homogeneous symbol
associated with it
in Lemma \ref{lem_Asigma_0}.
We define $\sigma_m$ via $\sigma(x,\pi) = \lambda_\pi^{\frac m2} \sigma_m(x,\pi)$ for $\pi\in \Gh\backslash\{1_{\Gh}\}$, 
and $\sigma(x,1_\Gh) = 0$. Then the symbol $\sigma_m$ is in $S^m(G)$ 
while the operator 
$$
A-\Op(\sigma_m)
=
\left(A(\id+\cL)^{-\frac m2} -\Op(\sigma_0)\right) (\id+\cL)^{\frac m2}
\ \mbox{mod}\Psi^{-\infty},
$$
is in $\Psi^{m-1}_{cl}(G)$.
Recursively, we obtain a poly-homogeneous expansion $\sum_{j\in \bN_0}\sigma_{m-j}$ for $\sigma=\Op_G^{-1} A$
and this shows  $\sigma\in S^m_{cl}(G)$.
We have obtained the reverse inclusion 
$\Psi^m_{cl}(G)\subset \Op_{\bT^n}(S^m_{cl}(G))$
and this concludes the proof of Theorem \ref{thm:PsiclG}.
\end{proof}

\subsection{Non-commutative residue and canonical trace on $G$}

Here, we generalise Corollary \ref{cor_TR_thm:PsiclT} to the case of a compact Lie group which may not be commutative, i.e. may not be the torus.
Applying Theorem \ref{thm_myexp} and Proposition 
\ref{prop_TR_in_exp} to the Laplace operator $\cL$,
 we obtain readily the following result.

\begin{proposition}
		Let  $\sigma\in S^m_{cl}(G)$ with $m\in \bC$. 
We set $A:=\Op_{G}(G)$.

\begin{enumerate}
\item Let $\eta\in \cD(0,\infty)$.
Then we have an expansion as $t\to 0^+$:
$$
\tr(A\, \eta(\cL)) = 
\sum_{\pi\in \Gh} d_\pi
\eta(t\lambda_\pi) \tr (\sigma(x,\pi))
\sim 
c_{m+n}
t^{-\frac{m+n}{m_0}} 
+
c_{m-n-1}
t^{-\frac{m+n-1}{m_0}} 
+
\ldots 
$$	
The constants $c_{m+n-j}$  are of the form 
$c_{m+n-j} = c_{m+n-j}^{(\sigma)} c_{m+n-j}^{(\eta)}$
where $c_{m+n-j}^{(\sigma)}$ depends only on the homogeneous expansion of $\sigma$ and 
$$
c_{m+n-j}^{(\eta)}:=
\frac12
\int_{u=0}^{+\infty}
\eta(u) \ u^{\frac{m-j+n}{2}} \frac{du}u .
$$	
If  $m\in \bZ_n$ then $c_0=c_0^{(\sigma)}c_0^{(\eta)}$ with
$$
c_0^{(\sigma)}=\res A,\qquad\mbox{and}\qquad
c_0^{(\eta)}=\frac 12 \int_0^{+\infty} \!\!\! \eta(u)\ \frac{du}u .
$$

\item 
Here, $\Re m\geq -n$ with $m \notin \bZ$
and $N\in \bN$ is  such that $\Re m+n<N$.
Let $\eta\in \cD(\bR)$ 	such that  $\eta\equiv 1$ on a neighbourhood of 
$0$.
Then we have an expansion as $t\to 0^+$:
$$
\tr(A\, \eta(\cL)) = 
\sum_{\pi\in \Gh} d_\pi
\eta(t\lambda_\pi) \tr (\sigma(x,\pi))
= 
\TR(A) + \sum_{j=0}^{N-1} c'_{m+n-j} t^{\frac{-m-n+j}{m_0}}  +o(1).
$$
The constants $c'_{m+n-j}$  are of the form 
$c'_{m+n-j} = {c'}_{m+n-j}^{(\sigma)} {c'}_{m+n-j}^{(\eta)}$
where ${c'}_{m+n-j}^{(\sigma)}$ depends only on the poly-homogeneous expansion of $\sigma$ and 
$$
{c'}_{m+n-j}^{(\eta)}:=
\frac1{2}
\int_{u=0}^{+\infty}
\eta(u) \ u^{\frac{m-j+n}{2}} \frac{du}u .
$$	
\end{enumerate}
\end{proposition}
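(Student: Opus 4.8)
The plan is to apply Theorem~\ref{thm_myexp} and Proposition~\ref{prop_TR_in_exp} directly to the operator $A=\Op_G(\sigma)$ paired with the Laplace--Beltrami operator $\cL_G$ of $G$, after first rewriting $\tr(A\,\eta(t\cL_G))$ in terms of the global symbol $\sigma$.

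First I would observe that by Theorem~\ref{thm:PsiclG} the operator $A=\Op_G(\sigma)$ lies in $\Psi^m_{cl}(G)$, and that $\cL_G=-X_1^2-\cdots-X_n^2$ (for an orthonormal basis $X_1,\ldots,X_n$ of $\fg$) is an elliptic, essentially self-adjoint, classical pseudo-differential operator of order $m_0=2$ on the compact manifold $G$, whose dimension $n>1$ satisfies $n\geq2$. Since $\cL_G\geq0$ its spectrum lies in $[0,+\infty)$, so $\spec(\cL_G)\cap(-\infty,0]=\{0\}$; this last point will be used in Part~(2). Hence both parts of Theorem~\ref{thm_myexp} apply to the pair $(A,\cL_G)$.

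Next I would identify the trace. As $\cL_G$ is central, $\eta(t\cL_G)$ is a group Fourier multiplier with symbol $\eta(t\lambda_\pi)\id_{\cH_\pi}$, so composing with $A=\Op_G(\sigma)$ shows that $A\,\eta(t\cL_G)=\Op_G\bigl(\eta(t\lambda_\pi)\,\sigma(x,\pi)\bigr)$. For $t$ fixed this symbol is compactly supported in $\pi\in\Gh$, hence smoothing, so the operator is trace-class and Lemma~\ref{lem_tr_PsimG<-n} yields
$$
\tr\bigl(A\,\eta(t\cL_G)\bigr)=\sum_{\pi\in\Gh} d_\pi\,\eta(t\lambda_\pi)\,\tr\!\int_G\sigma(x,\pi)\,dx,
$$
which is the quantity on the left of both expansions.

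It then remains to invoke the cited results. For Part~(1) the function $\eta\in\cD(0,\infty)$ is supported in $(0,\infty)$, so Theorem~\ref{thm_myexp}(1) gives the stated asymptotic expansion as $t\to0^+$ and Theorem~\ref{thm_myexp}(3) gives the factorisation $c_{m+n-j}=c_{m+n-j}^{(\sigma)}c_{m+n-j}^{(\eta)}$, in which $c_{m+n-j}^{(\sigma)}$ depends only on the local poly-homogeneous expansion of $A$ --- which by Theorem~\ref{thm:PsiclG} is the same data as the homogeneous expansion $\sigma\sim_h\sum_j\sigma_{m-j}$ --- while, since $m_0=2$,
$$
c_{m+n-j}^{(\eta)}=\frac1{m_0}\int_0^{+\infty}\eta(u)\,u^{\frac{m-j+n}{m_0}}\,\frac{du}u=\frac12\int_0^{+\infty}\eta(u)\,u^{\frac{m-j+n}{2}}\,\frac{du}u.
$$
When moreover $m\in\bZ_n$, Proposition~\ref{prop_res_in_exp}(3) identifies the constant term: $c_0=\frac1{m_0}\res(A)\int_0^{+\infty}\eta(u)\frac{du}u$, that is $c_0^{(\sigma)}=\res A$ and $c_0^{(\eta)}=\frac12\int_0^{+\infty}\eta(u)\frac{du}u$. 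For Part~(2) the function $\eta\in\cD(\bR)$ is identically $1$ near $0$, hence near $\spec(\cL_G)\cap(-\infty,0]=\{0\}$, and $\Re m\geq-n$, $m\notin\bZ$; so Theorem~\ref{thm_myexp}(2) gives $\tr(A\,\eta(t\cL_G))=c'(A)+\sum_{j=0}^{N-1}c'_{m+n-j}t^{\frac{-m-n+j}{m_0}}+o(1)$ with the analogous product structure for the coefficients $c'_{m+n-j}$, and since $\bZ_n\subset\bZ$ the hypothesis $m\notin\bZ$ forces $m\notin\bZ_n$, so Proposition~\ref{prop_TR_in_exp}(2) identifies $c'(A)=\TR(A)$. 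The only step that is not pure bookkeeping is the symbol-level computation of $A\,\eta(t\cL_G)$ together with the verification that $\cL_G$ meets all the hypotheses of Theorem~\ref{thm_myexp}; once those are in hand, the rest amounts to tracking the constants, the value $m_0=2$, and the inclusion $\bZ_n\subset\bZ$.
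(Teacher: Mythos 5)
Your proposal is correct and follows exactly the route the paper intends: the paper dismisses the proof with the single remark ``Applying Theorem~\ref{thm_myexp} and Proposition~\ref{prop_TR_in_exp} to the Laplace operator $\cL$, we obtain readily the following result,'' and your proposal simply spells out the bookkeeping — verifying the hypotheses on $\cL_G$, computing $\tr(A\,\eta(t\cL_G))$ via Lemma~\ref{lem_tr_PsimG<-n}, specialising $m_0=2$, and noting $\bZ_n\subset\bZ$. (Your version of the trace identity with $\tr\!\int_G\sigma(x,\pi)\,dx$ corrects the typo $\tr(\sigma(x,\pi))$ in the statement.)
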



\begin{thebibliography}{00}

\bibitem{connes}
 {Connes, A.},
 \emph{non-commutative geometry},
{Academic Press, Inc., San Diego, CA},
 {1994}.



\bibitem{dixmierVnA}
 {Dixmier, J.},
 \textit{von {N}eumann algebras},
  \textbf{27},
 {Translated from the second French edition by F. Jellett},
 {North-Holland Publishing Co., Amsterdam-New York},
     {1981}.
		
		
\bibitem{Fed++Schrohe}
{Fedosov, B. V. and Golse, F. and Leichtnam, E. and
              Schrohe, E.},
{The non-commutative residue for manifolds with boundary},
 \emph{J. Funct. Anal.},
 \textbf{142},
  {1996},
  No {1},
   pp {1--31}.


\bibitem{myexp}
 {Fischer, V.},
Real trace expansion,
preprint at arXiv.


\bibitem{monarxiv}
 {Fischer, V.},
 Differential structure on the dual of a compact Lie group,
preprint at arXiv:1610.06348.


  
\bibitem{monJFA}
 {Fischer, V.},
  {Intrinsic pseudodifferential calculi on any compact {L}ie
              group},
   \emph{J. Funct. Anal.}
  \textbf{268}     {(2015)},
     \textbf{11},
     pp {3404--3477}.



\bibitem{Grubb+Schrohe}
 {Grubb, G. and Schrohe, E.},
 {Traces and quasi-traces on the {B}outet de {M}onvel algebra},
 \emph {Ann. Inst. Fourier (Grenoble)},
\textbf {54},
 {2004},
No {5},
 pp {1641--1696, xvii, xxii}.
 
 	 \bibitem{grubb}
  {Grubb, G.},
  {A resolvent approach to traces and zeta {L}aurent expansions},
 \emph{Spectral geometry of manifolds with boundary and decomposition
              of manifolds},
 \emph{Contemp. Math.},
  \textbf {366},
   pp {67--93},
{Amer. Math. Soc., Providence, RI},
 {2005}. 
 
 \bibitem{Grubb+Seeley}
 {Grubb, G. and Seeley, R. T.},
{Weakly parametric pseudodifferential operators and
              {A}tiyah-{P}atodi-{S}inger boundary problems},
\emph {Invent. Math.},
 \textbf{121},
       {1995},
    No {3},
    pp {481--529}.
    
\bibitem{guillemin}  
{Guillemin, V.},
{A new proof of {W}eyl's formula on the asymptotic distribution
              of eigenvalues},
\emph {Adv. in Math.},
  {Advances in Mathematics},
 \textbf{55},
 {1985},
  No {2},
  pp {131--160}.

\bibitem{hall_bk}
 {Hall, B.},
 \emph{Lie groups, {L}ie algebras, and representations},
 {Graduate Texts in Mathematics},
 \textbf {222},
 {Second edition},
 {Springer, Cham},
 {2015}.

\bibitem{helgason_bk}
 {Helgason, S.},
 \emph{Groups and geometric analysis},
 {Mathematical Surveys and Monographs},
 \textbf{83},
 {American Mathematical Society, Providence, RI},
 {2000}, Reprint of the 1984 original.
	

\bibitem{kassel}
 {Kassel, C.},
 {Le r\'esidu non commutatif (d'apr\`es {M}.\ {W}odzicki)},
 {S\'eminaire Bourbaki, Vol. 1988/89},
 {Ast\'erisque},
No \textbf{177-178},
 {1989},
  pp {Exp.\ No.\ 708, 199--229}.

\bibitem{kir}
{Kirillov, A.},
\emph{Lectures on the orbit method},
  {Graduate Studies in Mathematics},
 \textbf{64},
 {American Mathematical Society, Providence, RI},
 {2004}.


\bibitem{knapp_bk}
{Knapp, A.},
 \emph{Lie groups beyond an introduction},
 {Progress in Mathematics}
\textbf{140}  {(1996)},
 {Birkh\"auser Boston, Inc., Boston, MA}.

 


\bibitem{KV}
 {Kontsevich, M. and Vishik, S.},
 \emph{Geometry of determinants of elliptic operators},
 in  \emph{Functional analysis on the eve of the 21st century, {V}ol. 1
              ({N}ew {B}runswick, {NJ}, 1993)},
{Progr. Math.},
\textbf {131},
 pp {173--197},
 {Birkh\"auser Boston},
 {1995}.

   
    \bibitem{lesch}
 {Lesch, M.},
 {On the non-commutative residue for pseudodifferential operators
              with log-polyhomogeneous symbols},
 \emph{Ann. Global Anal. Geom.},
\textbf {17},
 {1999},
No {2},
 pp {151--187}.
 
  \bibitem{lesch_2010}
 {Lesch, M.},
 {Pseudodifferential operators and regularized traces},
in \emph {Motives, quantum field theory, and pseudodifferential
              operators},
    {Clay Math. Proc.},
   \textbf{12},
   pp {37--72},
 {Amer. Math. Soc., Providence, RI},
 {2010}.

\bibitem{Paycha+_tams} 
 {L\'evy, C., Jim\'enez N., C. and Paycha, S.},
 {The canonical trace and the non-commutative residue on the
              non-commutative torus},
\emph{Trans. Amer. Math. Soc.},
\textbf{368},
      {2016},
    No {2},
    pp {1051--1095}.
    
    
\bibitem{okikiolu}
 {Okikiolu, K.},
 {Critical metrics for the determinant of the {L}aplacian in odd
              dimensions},
\emph {Ann. of Math. (2)},
 \textbf {153},
       {2001},
    No {2},
     pp {471--531}.



    
\bibitem{paycha}
 {Paycha, S.},
 \emph{Regularised integrals, sums and traces},
 {University Lecture Series},
\textbf{59},
       \emph{An analytic point of view},
 {American Mathematical Society, Providence, RI},
 {2012}.

\bibitem{pietsch} 
 {Pietsch, A.},
 {Traces and residues of pseudo-differential  operators on the
              torus},
\emph{Integral Equations Operator Theory},
     \textbf{83},
      {2015},
    No {1},
    pp {1--23}.

\bibitem{ponge}
 {Ponge, R.},
 {Traces on pseudodifferential operators and sums of
              commutators},
\emph {J. Anal. Math.},
\textbf{110},
 {2010},
 pp {1--30}.


 \bibitem{ruzhansky+turunen_bk}  
 {Ruzhansky, M. and Turunen, V.},
 \emph{Pseudodifferential operators and symmetries},
  {pseudoDifferential Operators. Theory and Applications}
  \textbf{2}  {(2010)},
   {Background analysis and advanced topics},
 {Birkh\"auser Verlag, Basel}.

 \bibitem{ruzhansky+turunen+wirth}  
 {Ruzhansky, M. and Turunen, V. and Wirth, J.},
     \emph{H\"ormander class of pseudodifferential operators on compact
              {L}ie groups and global hypoellipticity},
   {J. Fourier Anal. Appl.},
 Vol.  \textbf{20},
      {2014},
   No {3},
    pp {476--499}.

\bibitem{schrohe}
 {Schrohe, E.},
{Wodzicki's noncommutative residue and traces for operator
              algebras on manifolds with conical singularities},
in \emph {Microlocal analysis and spectral theory ({L}ucca, 1996)},
   \emph {NATO Adv. Sci. Inst. Ser. C Math. Phys. Sci.},
    \textbf {490},
     pp {227--250},
  {Kluwer Acad. Publ., Dordrecht},
  {1997}.
	
\bibitem{scott} 
 {Scott, S.},
 \emph{Traces and determinants of pseudodifferential operators},
 \emph {Oxford Mathematical Monographs},
  {Oxford University Press, Oxford},
  {2010}.
  
 \bibitem{seeley_67} 
 {Seeley, R. T.},
 {Complex powers of an elliptic operator},
in \emph{Singular {I}ntegrals ({P}roc. {S}ympos. {P}ure {M}ath.,
              {C}hicago, {I}ll., 1966)},
  pp {288--307},
  {Amer. Math. Soc., Providence, R.I.},
  {1967}.
  
\bibitem{shubin_bk}  
 {Shubin, M.},
\emph {Pseudodifferential operators and spectral theory},
 second edition,
 {Translated from the 1978 Russian original by Stig I.
              Andersson},
  {Springer-Verlag, Berlin},
  {2001}.

\bibitem{taylor_bk_princeton}
 {Taylor, M.},
 \emph{Pseudodifferential operators},
 {Princeton Mathematical Series},
 \textbf{34},
 {Princeton University Press, Princeton, N.J.},
 {1981}.
  
  \bibitem{taylor_bk86}
 {Taylor, M.},
 {Noncommutative harmonic analysis},
  {Mathematical Surveys and Monographs}
 Vol. \textbf{22},
 {American Mathematical Society, Providence, RI},
  {1986}.
  
\bibitem{Wod_84} 
 {Wodzicki, M.},
 {Local invariants of spectral asymmetry},
\emph {Invent. Math.},
 \textbf {75},
   {1984},
  No  {1},
   pp {143--177}.
   
\bibitem{Wod_87}
 {Wodzicki, M.},
 {Noncommutative residue. {I}. {F}undamentals},
in \emph {{$K$}-theory, arithmetic and geometry ({M}oscow, 1984--1986)},
   {Lecture Notes in Math.},
  \textbf {1289},
   pp {320--399},
  {Springer, Berlin},
  {1987}.
    
 \end{thebibliography}
\end{document}